\documentclass[11pt,letterpaper]{amsart}
\usepackage{amsmath,amssymb,mathtools,booktabs}
\usepackage[shortlabels]{enumitem}
\usepackage[hidelinks]{hyperref}
\usepackage{url}

\newtheorem{theorem}{Theorem}[section]
\newtheorem{proposition}[theorem]{Proposition}
\newtheorem{lemma}[theorem]{Lemma}
\newtheorem{corollary}[theorem]{Corollary}
\theoremstyle{definition}
\newtheorem{definition}[theorem]{Definition}
\theoremstyle{remark}
\newtheorem{remark}[theorem]{Remark}
\newtheorem{example}[theorem]{Example}
\newcommand{\Z}{\mathbb Z}
\newcommand{\Q}{\mathbb Q}
\newcommand{\F}{\mathbb F}
\newcommand{\C}{\mathbb C}
\newcommand{\A}{\mathbb A}
\newcommand{\Nm}{\operatorname{N}}
\newcommand{\disc}{\operatorname{disc}}
\newcommand{\Res}{\operatorname{Res}}
\newcommand{\rad}{\operatorname{rad}}
\newcommand{\Ht}{\operatorname{Ht}}
\newcommand{\ind}{\operatorname{ind}}

\newcommand{\cF}{\mathcal F}
\newcommand{\cO}{\mathcal O}
\renewcommand{\epsilon}{\varepsilon}

\allowdisplaybreaks[2]

\title[Galois groups with prescribed coefficients]{Galois groups of integer polynomials\break with prescribed coefficients}
\author{Evan M. O'Dorney}
\date{September 23, 2026. Research draft.}
\subjclass[2020]{11R32, 11C08, 11N35, 20B35}
\keywords{Galois groups, prescribed coefficients, van der Waerden's conjecture, fixed norm, Fourier sieve}

\begin{document}
\begin{abstract} 
Fix a degree $n$, a nonzero integer constant term, and some further coefficients of a monic integer polynomial, leaving $s$ coefficients to vary in $[-H,H]$. We first prove that, when only the leading and constant coefficients are fixed, the number of polynomials whose Galois group is not $S_n$ is of order $H^{n-2}$ for every $n\ge4$. We then prove a half-dimensional extension: for $n\ge6$ the corresponding exceptional count is of order $H^{s-1}$ whenever $s>n/2+1$, provided the coefficient slice meets the higher-multiplicity loci with a specified dimension bound. This geometric condition holds for every prescribed value if the free coefficients form an initial or final block, and for a nonempty Zariski-open set of prescribed values for arbitrary coefficient positions. It also holds for every value of one additional prescribed interior coefficient. Thus, in degrees seven and eight, any one interior coefficient can be fixed in addition to the leading and constant coefficients.

The proof combines a fixed-norm version of Bhargava's primitive-group sieve with sparse Hermite interpolation. A separate imprimitive estimate, using a minimal intermediate field, gives $O(H^{\beta(n)})$ outside rational partial-product hypersurfaces, where $\beta(n)=\ell+n/\ell-2\le n/2$ for composite $n$ with smallest prime divisor $\ell$, and has no logarithmic loss. The fixed-constant quartic and quintic endpoints require separate local arguments: exceptional Fourier directions for $A_4$ quartics and a quadratic Gauss sum for quintics. We isolate further even-degree endpoint results, explain the distinct cubic obstruction, and conclude with conditional improvements under the upper-bound form of Malle's conjecture.
\end{abstract}
\maketitle

\section{Introduction}

For monic degree-$n$ integer polynomials with all $n$ nonleading
coefficients bounded by $H$, Bhargava proved the van der Waerden bound
$O_n(H^{n-1})$ for the number with Galois group different from $S_n$
\cite{Bhargava}. Cohen's large-sieve results allow prescribed coefficient
positions, under the appropriate generic-Galois-group hypotheses, with
an exceptional proportion $O(H^{-1/2}\log H)$ \cite{Cohen}. The question
considered here is whether the stronger $H^{-1}$ scale survives when
several coefficients are fixed.

Our first result treats the basic prescribed-coefficient family in
which only the leading coefficient and a nonzero constant term are
fixed. There are then $n-1$ free coefficients. The fixed constant term
is particularly useful because a root of an irreducible polynomial has
fixed norm; this reduces the number of generators contributed by any
one number field to a polylogarithmic quantity. The second ingredient
is a Fourier sieve carried out directly on the coefficient slice.
Neither an estimate for the whole coefficient box nor a count of
number fields can simply be divided by $H$ each time another
coefficient is prescribed.

We then ask how many additional coefficients may be fixed while
retaining the stronger $H^{-1}$ exceptional proportion. We prove a
half-dimensional extension. The distinction between coefficient
\emph{positions} and prescribed coefficient \emph{values} is
important: arbitrary positions are permitted, but an unrestricted
assertion for every prescribed tuple would require additional control
of special multiplicity loci. Our theorems make this qualification
explicit.

\subsection{Families and the main results}
Write
\begin{equation}\label{eq:family}
 f_{\mathbf a}(x)=x^n+\sum_{i\in I}a_i x^i
                         +\sum_{j\in J}b_jx^j+c,
 \qquad I\sqcup J=\{1,\ldots,n-1\},\quad |I|=s,
\end{equation}
where $c\in\Z\setminus\{0\}$ and $\mathbf b=(b_j)_{j\in J}$ are fixed.
Our indices are the exponents of $x$, rather than the distances from
the leading term. Let $\cF(H)$ consist of the polynomials
\eqref{eq:family} with $a_i\in\Z$ and $|a_i|\le H$. We write
$E_{\cF}(H)$ for the number that are reducible, have discriminant zero,
or are irreducible with Galois group properly contained in $S_n$.
Thus $|\cF(H)|=(2\lfloor H\rfloor+1)^s$. We use coefficient
height $\Ht(f)=\max(1,|c|,|a_i|,|b_j|)$. Since the prescribed
coefficients are fixed, for sufficiently large $H$ every member of
$\cF(H)$ has height at most $H$. Implied constants may depend on
$n,I,\mathbf b,c$ and on explicitly indicated auxiliary parameters.

When $J=\varnothing$, write $E_{n,c}(H)$ for $E_{\cF}(H)$. Thus
$E_{n,c}(H)$ counts monic degree-$n$ polynomials
\[
 x^n+a_{n-1}x^{n-1}+\cdots+a_1x+c,
 \qquad |a_i|\le H,
\]
that are reducible, inseparable, or have Galois group different from
$S_n$.

\begin{theorem}\label{thm:fixed-constant}
Fix $n\ge4$ and $c\in\Z\setminus\{0\}$. Then
\[
 E_{n,c}(H)\asymp_{n,c}H^{n-2}.
\]
Equivalently, among monic degree-$n$ integer polynomials with fixed
nonzero constant term $c$, the proportion whose Galois group is not
$S_n$ is of order $H^{-1}$.
\end{theorem}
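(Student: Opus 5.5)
We prove the lower bound and the upper bound separately. The family has $s=n-1$ free coefficients, so $|\cF(H)|\asymp H^{n-1}$, and the claim is that the exceptional proportion is of exact order $H^{-1}$.

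\emph{Lower bound.} We exhibit $\gg H^{n-2}$ reducible members. Take $f(x)=(x-d)\,g(x)$ with $d\mid c$ fixed (say $d=1$) and $g$ monic of degree $n-1$ with constant term $-c/d$. The $n-2$ middle coefficients of $g$ range over a box of side $\asymp H$. The map $g\mapsto (x-d)g$ is injective and linear with bounded coefficients, so the resulting $f$ lie in $\cF(H)$ for $g$ in a box of side $\gg H$. This gives $\gg H^{n-2}$ distinct reducible polynomials.

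\emph{Upper bound.} We split $E_{n,c}(H)$ according to the source of exceptionality.
\begin{enumerate}[(i)]
\item \emph{Reducible or inseparable $f$.} For inseparable $f$, the discriminant is a nonzero polynomial in the free coefficients. We would therefore hope for a bound $O(H^{n-2})$ by counting points on a hypersurface, after checking that the discriminant is not identically zero on the slice. For reducible $f=gh$, the constant terms of $g$ and $h$ are divisors of $c$, so there are $O_c(1)$ choices for them. Counting factorizations with $\deg g=k$ and coefficients bounded by $O(H)$ gives roughly $H^{(k-1)+(n-k-1)}=H^{n-2}$. The difficulty here is the familiar one from van der Waerden's problem: a factor may have large coefficients even when $f$ does not. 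We would handle this with the Mahler measure bound $M(g)\le M(f)\ll H$, which keeps the count at $O(H^{n-2})$.
\item \emph{Imprimitive Galois group.} Here we use the separate imprimitive estimate outlined in the abstract: a minimal intermediate field gives the bound $O(H^{\beta(n)})$ with $\beta(n)\le n/2\le n-2$ for $n\ge4$, outside rational partial-product hypersurfaces. The points on those hypersurfaces contribute $O(H^{n-2})$ by dimension counting.
\item \emph{Primitive group $G\ne S_n$.} We run the fixed-norm version of Bhargava's sieve. If $G\subsetneq S_n$ is primitive and not $A_n$, then for a positive density of primes $p$ the reduction $f\bmod p$ avoids some factorization type, for example an $n$-cycle or an $(n-1)$-cycle type. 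We sieve on the slice $\{a\}\subset(\Z/p)^{n-1}$ with $c$ fixed, using a Fourier/large sieve in which the relevant local densities are computed on the affine slice. Alternatively, we count fields: a root $\alpha$ generates a field $K$ with $N(\alpha)=\pm c$ and height $\ll H$. The key point is that fixed norm leaves only polylogarithmically many generators per field. Combining this with Schmidt-type bounds on the number of fields, and sparse Hermite interpolation to recover $f$ from its values, should give $O(H^{n-2})$ for small $G$.

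The case $G=A_n$ (square discriminant) we would handle by counting $f$ in the slice with $\disc(f)=y^2$. We would apply the Fourier sieve with a quadratic-character sum over the slice to get a power saving of at least $H^{-1}$.
\end{enumerate}

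\emph{Main obstacle.} We expect the hardest part to be the endpoints $n=4,5$, where the generic sieve saving falls short of a full $H^{-1}$. For $A_4$ quartics we would isolate the exceptional Fourier directions in which the local density of the non-$S_4$ condition fails to equidistribute, and treat them separately. For quintics we would evaluate the relevant character sum exactly as a quadratic Gauss sum. This step is where we expect losses to appear, and we would try these explicit local computations first.
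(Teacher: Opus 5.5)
Your lower bound is essentially the paper's (the hyperplane $f(1)=0$). Your count of reducibles via $M(g)\le M(f)\ll H$ is correct, and simpler than the paper's partial-product resolvents. The paper needs those resolvents anyway to remove rational block constants in the imprimitive step. Your ``by dimension counting'' in (ii) also still requires checking that they do not vanish identically on the slice; the paper does this with a Newton polygon at $a_i=\infty$.

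The genuine gap is step (iii), which is the heart of the theorem. None of the mechanisms you list reaches the $H^{-1}$ scale.
\begin{itemize}
\item Sieving by an avoided Frobenius cycle type is a large-sieve argument. It only admits moduli up to about $H^{1/2}$, so it yields proportion $O(H^{-1/2}\log H)$ (Cohen's bound) and no better.
\item Detecting square discriminants for $A_n$ with quadratic character sums removes a positive proportion of residues at each prime, so it hits the same barrier.
\item Counting fields directly fails because a root field only satisfies $|\disc K_f|\le|\disc f|\ll H^{2n-2}$. Even polylogarithmically many fixed-norm generators per field, times $X^{(n+2)/4}$ fields at $X=H^{2n-2}$, is far larger than $H^{n-2}$.
\item Sparse Hermite interpolation is not a device for recovering $f$ from its values. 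In the paper it is a rank statement for the functionals $P\mapsto P(r),P'(r),P(t),P'(t)$ on the span of the free monomials, used to bound Fourier transforms of the two-double-roots weight.
\end{itemize}

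The missing idea is the discriminant dichotomy. A proper primitive group, $A_n$ included, contains no transposition. Hence at every tamely ramified $p$ outside a finite set, $k_p=v_p(\disc K_f)\ge2$ and $f\bmod p\in\Sigma_{k_p}$, so $D_0\ge C_0^2$. The argument then splits into three cases.
\begin{itemize}
\item If $D_0\le H^{2+2\delta}$, Bhargava's primitive field count $X^{n/4+\epsilon}$ times the $O((\log H)^{n-1})$ fixed-norm generators per field gives $H^{n/2+O(\delta)}=o(H^{n-2})$ for $n\ge5$.
\item If $D_0$ is large but $C_0\le H^{1+\delta}$, you sieve on the slice for index at least $k_p$ modulo each $p\mid C_0$ by Poisson summation. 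This needs local bounds $\widehat W_{k,p}(0)\ll p^{-\rho_k}$ with $\rho_k-k/2\ge1$, and $|\widehat W_{k,p}(g)|\ll p^{-2-\theta}$ with $\theta>0$ for $g\ne0$.
\item If $C_0>H^{1+\delta}$, large prime factors are handled by the resultant $\Res_T(\Delta,\partial_T\Delta)$; otherwise a middle-band divisor of $C_0$ returns you to the Poisson case.
\end{itemize}

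Your endpoint remarks only make sense inside this framework. For $n=5$, the Gauss sum bounds the transform of the $w_{22}$ weight, which has no free quotient coefficient, and this gives $\theta=1/2$. For $n=4$, the field-count comparison is an equality, $2\cdot(n/4)=n-2$. So one needs the stronger $F_4(A_4,Z)\ll Z^{4/5}$, plus explicit treatment of the frequencies $g_2=0$, $g_1+vg_3=0$ of the square family when $c$ is a square.
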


Theorem~\ref{thm:main} below proves Theorem~\ref{thm:fixed-constant}
for $n\ge6$. Degrees four and five require the separate endpoint
arguments in Section~\ref{sec:edges}.

The family defines a coordinate slice $L\simeq\A^s$ in the space of
monic degree-$n$ polynomials with constant term $c$. For $1\le k<n$,
let $\Sigma_k$ be the closed locus of polynomials having at most $n-k$
distinct roots over an algebraic closure. Equivalently,
$\Sigma_k$ is the locus of polynomials of index at least $k$, where
\[
 \ind(f)=n-\deg\rad(f).
\]
All geometric dimensions below are taken over $\overline\Q$; the
empty set has dimension $-\infty$.

\begin{definition}\label{def:regular}
A slice $L$ is \emph{index-regular} if
\begin{equation}\label{eq:regular}
 \dim(L\cap\Sigma_k)\le\max(s-k,0)
                \qquad(2\le k\le n-1).
\end{equation}
It is \emph{strongly index-regular} if
$\dim(L\cap\Sigma_k)\le s-k$ for $k\le s$, and
$L\cap\Sigma_k=\varnothing$ for $k>s$.
\end{definition}

\begin{theorem}\label{thm:main}
Suppose $n\ge6$, $c\ne0$, and
\begin{equation}\label{eq:threshold}
 s>\frac n2+1,
 \qquad\text{equivalently}\qquad
 s\ge\left\lfloor\frac n2\right\rfloor+2.
\end{equation}
If the slice \eqref{eq:family} is index-regular, then
\begin{equation}\label{eq:main}
 E_{\cF}(H)\asymp_{n,I,\mathbf b,c} H^{s-1}.
\end{equation}
In particular the exceptional proportion is of order $H^{-1}$.
\end{theorem}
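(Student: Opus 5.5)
The plan is to prove the lower bound $E_{\cF}(H)\gg H^{s-1}$ by an explicit construction, and to split the upper bound according to the Galois group: reducible polynomials, inseparable ones, transitive imprimitive groups, and primitive groups other than $S_n$.

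\emph{Lower bound.} I would use reducible members of $\cF(H)$ that have a linear factor. Fix a root $r\in\{\pm1\}$ dividing $c$. The condition $f_{\mathbf a}(r)=0$ is a single affine-linear equation in the free coefficients $a_i$. Since $|r|=1$, each $a_i$ appears in it with coefficient $\pm1$, so it cuts out a codimension-one integral sublattice. That sublattice contains $\gg H^{s-1}$ points in the box, provided the equation is solvable. If it is not, I would instead use a factor $x-r$ with $r\mid c$ chosen so that the congruence conditions are solvable. The corresponding hyperplane is rational and nondegenerate, so it still contains $\asymp H^{s-1}$ lattice points.

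\emph{Upper bound, reducible and inseparable polynomials.}
\begin{itemize}
\item Inseparable polynomials lie on $L\cap\Sigma_1$, which is the discriminant hypersurface, of dimension $s-1$. A Schwartz--Zippel count gives $O(H^{s-1})$.
\item For a polynomial with a factor of degree $k\le n/2$, the fixed constant term pins the factor's constant term to a divisor of $c$.
\item Count these by the number of degree-$k$ factors of height $\ll H$, namely $H^{k-1}$, times the codimension the factorization imposes on the slice. Divisibility by a fixed monic degree-$k$ polynomial imposes $k$ linear conditions on $\mathbf a$, giving $H^{s-k}$ solutions when the conditions are independent.
\item The total is then $\sum_k H^{k-1}H^{s-k}\ll H^{s-1}$.
\end{itemize}
Independence can fail only when the slice degenerates. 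Under the hypothesis $s>n/2+1$, the $k\le n/2<s-1$ conditions should be generically independent; the degenerate cases I would control by the same dimension bookkeeping.

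\emph{Upper bound, imprimitive groups.} Here I invoke the paper's imprimitive estimate via a minimal intermediate field, which gives $O(H^{\beta(n)})$ with $\beta(n)\le n/2$ outside the rational partial-product hypersurfaces. Since $s-1>n/2$, this is $o(H^{s-1})$. The partial-product hypersurfaces have dimension at most $s-1$ in $L$, and a Schwartz--Zippel count bounds their contribution by $O(H^{s-1})$.

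\emph{Upper bound, primitive groups (main obstacle).} I would follow Bhargava's primitive-group sieve, carried out on the slice. For $G\ne S_n,A_n$, a polynomial with group $G$ has, at a positive proportion of primes $p$, a splitting type that $G$ does not contain. The sieve is a Fourier/large-sieve bound over residues modulo $p$. Restricting to the slice needs two inputs:
\begin{itemize}
\item equidistribution of splitting types of $f_{\mathbf a}\bmod p$ over $\mathbf a\in\F_p^s$;
\item control of the exceptional Fourier directions.
\end{itemize}
Index-regularity is exactly what controls the second input. It bounds, for each $k$, the number of residues of index $\ge k$ by $p^{\max(s-k,0)}$, and these are the high-multiplicity residues that dominate Bhargava's count weighted by $p^{\ind}$.

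The sum $\sum_k p^{s-k}p^{k}\cdots$ then yields the $H^{s-1}$ saving, via the discriminant-index trick: $p^{\ind}$ divides $\disc$, and a polynomial with ramification pattern forced by $G$ has a large squarefull discriminant part. For $A_n$, the condition is that the discriminant is a square, and I would count points of $L$ with square discriminant. The fixed-norm reduction handles this step: each root generates a field of bounded norm, so only polylogarithmically many generators contribute per field, and this removes the log loss.

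The hardest step is establishing, uniformly in $p$, that the slice's mod-$p$ index strata have the dimensions predicted by \eqref{eq:regular}. That is a Lang--Weil argument that is uniform for large $p$; small primes would be absorbed into the implied constant. The half-dimension threshold is what lets the resulting exponent $\max(s-1,\ s-k+\ldots)$ close up at $s-1$.
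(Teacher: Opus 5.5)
Your lower bound is fine; in fact $x-1$ always works, since $f(1)=0$ reads $\sum_{i\in I}a_i=-1-c-\sum_{j\in J}b_j$, which has unit coefficients, so no case split is needed. Quoting the imprimitive estimate is also legitimate, because $\beta(n)\le n/2<s-1$.

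\textbf{The primitive step.} This step carries the whole theorem, and the mechanism you sketch for it would not give $H^{s-1}$. Sieving by splitting types that $G$ omits is the Gallagher--Cohen large sieve. (Note also that a polynomial with group $G$ never has Frobenius types outside $G$; the sieve uses that it must avoid them.) That method yields only the proportion $H^{-1/2}\log H$.

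The mechanism that does work is ramification. A proper primitive group, including $A_n$, contains no transposition. So at every tame ramified prime $p\notin S$ one has $k_p=v_p(\disc K_f)\ge2$. Bhargava's comparison $\ind(f\bmod p)\ge v_p(\disc K_f)$ then puts $f\bmod p$ in $\Sigma_{k_p}$, whence $D_0\ge C_0^2$. This is the direction the sieve needs; ``$p^{\ind}\mid\disc$'' is the wrong one. Consequently $A_n$ needs no separate treatment. Your proposed count of points of $L$ with square discriminant is unnecessary, and it is not something available tools deliver at the $H^{s-1}$ level.

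One then splits by size:
\begin{enumerate}[(i)]
\item For $D_0\le H^{2+2\delta}$ you need an actual field count: $F_n(G,X)\ll X^{n/4+\epsilon}$ from Bhargava's primitive bound with $\ind(G)\ge2$, multiplied by the $O((\log H)^{n-1})$ fixed-norm generators per field. The fixed-norm count you mention belongs here. This gives $H^{n/2+O(\delta)}$, and it is here that $s-1>n/2$ is used, not in closing up exponents of the form $s-k$.
\item For $D_0>H^{2+2\delta}$ with $C_0\le H^{1+\delta}$, one uses Poisson summation on the slice.
\item For $C_0>H^{1+\delta}$, one needs large-prime elimination. Here $C_0\mid\mathcal R_j(\mathbf a')=\Res_T(\Delta,\partial_T\Delta)$, which is nonzero because $\dim(L\cap\Sigma_2)\le s-2$, and $\Delta$ has constant leading coefficient in $T=a_j$. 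One then chooses a divisor $B_0\mid C_0$ in $(H^{1+3\delta/4},H^{1+\delta}]$.
\end{enumerate}

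\textbf{The Fourier input.} You attribute control of nonzero Fourier directions to index-regularity, but index-regularity only bounds the zero-frequency masses $\widehat W_{k,p}(0)\ll p^{-\min(k,s)}$. At $g\ne0$ it gives only the trivial bound, which for index two is $p^{-2}$, i.e.\ $\theta=0$. The Poisson error $X^{s-1-\theta+\epsilon}$ at $X=H^{1+\delta}$ then exceeds $H^{s-1}$; this is exactly the cubic obstruction.

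The bound $|\widehat W_{2,p}(g)|\ll p^{-3}$ needs two separate arguments. For the triple-root weight, an augmented Vandermonde determinant suffices. For the two-double-root weight, one needs the sparse Hermite lemma. It shows that the functionals $P\mapsto P(r),P'(r),P(t),P'(t)$ on $\mathrm{span}\{x^i:i\in I\}$ have rank at least three. It also shows that a fixed $g\ne0$ lies in their row space only for a curve of quadratics (rank four) or a bounded finite set (rank three). This again uses $s>n/2+1$. By contrast, the uniform-in-$p$ dimension bounds you call the hardest step are routine: spread out, then use semicontinuity of fibre dimension and degree bounds.

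\textbf{Smaller points.} Your Schwartz--Zippel counts presuppose that the discriminant and the partial-product resolvents do not vanish identically on $L$, and this needs proof. For the discriminant, use its constant leading coefficient $\pm j^j(n-j)^{n-j}c^{j-1}$ in $a_j$. For the resolvents, use a Newton-polygon argument at $a_i=\infty$ with $\gcd(n,I)=1$. Once the resolvents are handled, reducible polynomials come for free. Your factor enumeration, as written, is incomplete, because on a sparse slice the $k$ divisibility conditions drop rank for special factors $g$.
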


The geometric hypothesis has the following unconditional instances.

\begin{theorem}\label{thm:instances}
Fix $n$, $c\ne0$, and the coefficient positions $I,J$.
\begin{enumerate}[(a)]
\item If $I=\{1,\ldots,s\}$ or
$I=\{n-s,\ldots,n-1\}$, the slice is index-regular for every
$\mathbf b\in\Z^J$.
\item There is a nonempty Zariski-open subset
$U_{I,c}\subseteq\A^J_{\Q}$ such that the slice is strongly
index-regular for every $\mathbf b\in U_{I,c}(\Q)$.
\item If $|J|\le1$, the slice is index-regular for every prescribed
tuple and every choice of the prescribed positions.
\end{enumerate}
Consequently Theorem~\ref{thm:main} applies in (a) and (c) for every
integral prescribed tuple, and in (b) for every integral tuple in
$U_{I,c}$.
\end{theorem}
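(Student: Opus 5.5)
The plan is to compute dimensions stratum by stratum, using root coordinates. For a multiplicity pattern $m=(m_1,\dots,m_d)$ with $\sum m_i=n$ and $d=n-k$ distinct roots, let $S_m\subset\A^d$ be the open set of $(r_1,\dots,r_d)$ with distinct, nonzero entries. Let $\phi_m:S_m\to\A^n$ send $r$ to $\prod(x-r_i)^{m_i}$. Then $\Sigma_k$ is the union of the images of $\phi_m$ over the finitely many patterns with at most $n-k$ parts. The map $\phi_m$ is quasi-finite, so dimensions may be computed upstairs.

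The constant-term condition is $N(r):=\prod r_i^{m_i}=(-1)^nc$. Each further prescribed coefficient $b_j$ is the condition that the elementary symmetric function $e_{n-j}$ of the multiset $\{r_i^{(m_i)}\}$ takes a fixed value. With $t=|J|=n-1-s$ conditions, the target bound $\dim(L\cap\Sigma_k)\le\max(s-k,0)$ becomes: the fibre in $S_m$ of the $t+1$ functions ($N$ and the relevant $e$'s) has dimension at most $\max(d-t-1,0)$. Patterns with fewer than $n-k$ parts lie in $\Sigma_{k'}$ for some $k'>k$, so they satisfy a stronger bound.

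\textbf{Part (a).} By reversal $f\mapsto c^{-1}x^nf(1/x)$, which preserves the index and is an isomorphism of the relevant affine spaces, the final-block case reduces to the initial-block case. In the initial-block case $I=\{1,\dots,s\}$, the prescribed coefficients are the top $t$ coefficients. By Newton's identities, fixing them is equivalent to fixing the weighted power sums $p_j=\sum m_ir_i^j$ for $j=1,\dots,t$.

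I claim the map $(p_1,\dots,p_{t'},\log N)$ with $t'=\min(t,d-1)$ is a submersion at every point of $S_m$. The $i$-th column of the Jacobian is $m_i\,(r_i^{0},\,2r_i,\,\dots,\,t'r_i^{t'-1},\,r_i^{-1})^{T}$. Multiply column $i$ by $r_i/m_i$, which is allowed because $r_i\ne0$, and rescale the rows. This gives the Vandermonde rows $(r_i^{j})_{0\le j\le t'}$ with distinct $r_i$, which have full rank $t'+1\le d$. Hence every nonempty fibre is smooth of dimension $d-t'-1=\max(d-t-1,0)$.

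\textbf{Part (c).} The case $|J|=0$ is immediate, since $\{N=\pm c\}\cap S_m$ has dimension $d-1=s-k$. For $|J|=1$, fix $e_j$ with $1\le j\le n-1$. It suffices to show that $e_j$ is nonconstant on every irreducible component of $\{N=\pm c\}$. Each such component is a dense open subset of a coset of the codimension-one subtorus $T\subset\mathbb G_m^d$ on which the character $r^m$ is trivial.

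Expand $e_j$ as a Laurent polynomial in the $r_i$. Its monomials have exponent vectors of total degree $j$. Two distinct exponent vectors restrict to the same character of $T$ only if their difference lies in $\Q m\cap\Z^d$. Such a difference has degree $0$ but is a multiple of $m$, whose degree is $n$, so the difference is zero. Likewise, no single monomial is trivial on $T$, because $0<j<n$ is not a multiple of $n$. Therefore the restriction of $e_j$ to the coset is a nontrivial combination of distinct characters, hence nonconstant, and the fibre drops dimension by one.

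\textbf{Part (b).} This is the generic case, handled by fibre dimension. Let $\Sigma_k^c$ be the closed subvariety of monic polynomials with constant term $c$ and index at least $k$. By the root parametrisation, each irreducible component has dimension $n-k-1$ and is defined over $\Q$. Consider the coordinate projection $\pi:\Sigma_k^c\to\A^J$; its fibre over $\mathbf b$ is exactly $L\cap\Sigma_k$.

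If a component dominates $\A^J$, the generic fibre dimension is $n-k-1-|J|=s-k$, and upper semicontinuity gives a dense open set where the fibre dimension is at most $s-k$. If a component is not dominant, which is forced when $k>s$, the complement of the closure of its image is a dense open set where the fibre is empty. In either case, when $k>s$ the fibre is empty on a dense open set.

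Intersecting these finitely many $\Q$-rational open sets, over all $k$ and all components, gives $U_{I,c}$. It has rational points because $\Q$ is infinite.

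\textbf{Main obstacle.} I expect the main difficulty to be ensuring the estimates hold on every fibre, not just generically, in (a) and (c). For (a) this is handled by the Vandermonde submersion argument, which needs $c\ne0$ to keep all $r_i$ away from zero. For (c) it is handled by the character argument on torus cosets. The remaining care is bookkeeping: pattern strata with fewer distinct roots must be assigned to the correct $\Sigma_k$, and the case $t\ge d$ must be capped at $t'=d-1$.
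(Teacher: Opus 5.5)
Your part (a) is the paper's argument: Newton's identities, full rank of the Vandermonde rows $(z_\nu^{-1}),(1),\ldots,(z_\nu^{m-1})$, then reversal. Your normalization $c^{-1}x^nf(1/x)$ loses integrality, but that is harmless for a geometric statement.

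Part (b) is also the paper's generic-fibre-dimension argument, with one simplification. You observe that a non-dominant component has empty generic fibre, so you never need dominance. The paper instead invokes Lemma~\ref{lem:coefficient-projections} to get the exact fibre dimension $r-1-m$. Note, though, that geometric components of $\Sigma_k^c$ need not be defined over $\Q$. For $n=4$, the pattern $(2,2)$ with $c$ a nonsquare gives the two lines $(x^2+ux\pm\sqrt c)^2$. So either work with $\Q$-irreducible components or intersect Galois conjugates of the good open sets, as the paper does.

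In (c) your route is genuinely different. The paper gets nonconstancy of the prescribed coefficient from Lemma~\ref{lem:coefficient-projections}. That lemma is proved by a Jacobian computation at negative real roots (Descartes' rule of signs) plus a scaling argument that reaches every component of the fixed-constant torus, and it gives algebraic independence of up to $r-1$ coefficients at once. Your character argument on cosets of the subtorus $T$ is purely algebraic and shorter. It yields only nonconstancy of a single coefficient, but that is exactly what (c) requires.

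One step of (c) is wrong as written. The characters trivial on $T$ form $\Q m\cap\Z^d$, not $\Z m$. So the claim that no monomial of $e_j$ is trivial on $T$ ``because $0<j<n$ is not a multiple of $n$'' fails whenever $(j/n)m$ is integral. For example, with $m=(2,\ldots,2)$ and $j=n/2$, the monomial $r_1\cdots r_d$ is constant on each coset.

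The conclusion is easily rescued from what you already proved:
\begin{enumerate}[(i)]
\item $e_j$ has a positive coefficient on every $r^\alpha$ with $0\le\alpha_i\le m_i$ and $|\alpha|=j$.
\item For $d\ge2$ there are at least two such $\alpha$. Given one, move a unit from an index with $\alpha_i>0$ to a different index with $\alpha_{i'}<m_{i'}$; such a pair exists because $0<j<n$ and $d\ge2$.
\item You showed that distinct exponents give distinct characters of $T$, so at most one of these monomials is trivial on $T$.
\item Linear independence of characters then makes the restriction nonconstant.
\end{enumerate}
Finally, you must treat $d=1$ separately. There $T$ is trivial and the components are points, so ``the fibre drops dimension by one'' is false. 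However, the stratum is finite, so the required bound $\max(s-k,0)=0$ holds trivially, as the paper notes.
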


In (a), fixing the constant term and a consecutive block of leading
coefficients is allowed without a genericity qualification. In (b),
``generic'' refers only to the \emph{fixed} tuple: once an integral tuple
in $U_{I,c}$ is selected, \eqref{eq:main} is an unconditional estimate
for that particular family, not an average over prescribed values.

\begin{corollary}\label{cor:one-extra}
For $n\ge7$, fix $c\ne0$, a position $1\le j\le n-1$, and an
integer $b$. Among monic degree-$n$ polynomials with constant term $c$,
coefficient of $x^j$ equal to $b$, and all other coefficients bounded
by $H$, the number exceptional is $\asymp H^{n-3}$.
\end{corollary}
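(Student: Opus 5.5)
The plan is to deduce Corollary~\ref{cor:one-extra} directly from Theorem~\ref{thm:main}, using Theorem~\ref{thm:instances}(c) to supply the geometric hypothesis. The family in the corollary is the slice \eqref{eq:family} with $J=\{j\}$, $\mathbf b=(b)$, and $I=\{1,\ldots,n-1\}\setminus\{j\}$. Hence $s=|I|=n-2$. The prescribed integer $b$ is arbitrary, including $b=0$, and the position $j$ is arbitrary as well. Because $|J|=1$, Theorem~\ref{thm:instances}(c) says the slice is index-regular for every such choice. No genericity restriction on $b$ is needed.

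It remains to check the numerical hypotheses of Theorem~\ref{thm:main}. We need $n\ge6$, which holds since $n\ge7$, and $c\neq0$, which is assumed. We also need the threshold \eqref{eq:threshold}, which here reads $n-2>n/2+1$, that is, $n>6$. This is exactly the assumption $n\ge7$. In the boundary case $n=7$ we have $s=5$ and $\lfloor 7/2\rfloor+2=5$, so the inequality holds. For $n=8$ we have $s=6=\lfloor 8/2\rfloor+2$, which is the sharp case mentioned in the abstract. Theorem~\ref{thm:main} then gives $E_{\cF}(H)\asymp H^{s-1}=H^{n-3}$. The implied constants depend on $n$, $j$, $b$ and $c$, consistent with the paper's convention that constants may depend on $n,I,\mathbf b,c$.

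Finally we match the counting conventions. The corollary counts polynomials whose coefficients other than the constant term and the $x^j$ coefficient are bounded by $H$. This is exactly $\cF(H)$. The word ``exceptional'' means reducible, inseparable, or Galois group a proper subgroup of $S_n$, which is the definition of $E_{\cF}(H)$. So the corollary follows once these two checks are made.

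All the substance lies in Theorem~\ref{thm:instances}(c) and Theorem~\ref{thm:main}, which we take as given. The only point needing attention is that $n=6$ is excluded: there $s=4<\lfloor 6/2\rfloor+2=5$, which is why the corollary starts at degree seven.
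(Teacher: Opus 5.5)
Your proposal is correct and is the paper's own argument: with $J=\{j\}$ you have $s=n-2$, so the threshold $s>n/2+1$ reduces to $n>6$. Theorem~\ref{thm:instances}(c) then supplies index-regularity for every position and every value of the extra coefficient, and Theorem~\ref{thm:main} yields $E_{\cF}(H)\asymp H^{n-3}$.
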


The leading coefficient may also be fixed at an arbitrary nonzero
integer: replacing $f(x)$, of leading coefficient $a$, by
$a^{n-1}f(x/a)$ produces a monic polynomial with constant term
$a^{n-1}c$ and rescales each free coefficient by a fixed nonzero
factor. The arguments below apply to such rectangular boxes and
fixed sublattices. We state the results in monic form to avoid extra
notation.

\subsection{The mechanism and its limitations}
There are three separate issues. First, prescribed coefficients must
not enlarge a multiplicity stratum too much. The capped dimension
bound \eqref{eq:regular}, rather than an unjustified density $p^{-k}$
in all codimensions, is sufficient for the sieve. Second, the
index-two locus must have cancellation in nonzero Fourier directions.
For arbitrary positions, a sparse Hermite interpolation argument
provides the stronger estimate $O(p^{-3})$ in the range
\eqref{eq:threshold}. Third, small-discriminant fields contribute
roughly $H^{n/2}$ with the field-counting estimate used here. This
last comparison explains the strict inequality $s-1>n/2$.

The imprimitive contribution does not force a larger constant in
\eqref{eq:threshold}. We show that, outside a bounded collection of
rational partial-product hypersurfaces, it is at most
\begin{equation}\label{eq:beta-intro}
 H^{\beta(n)},\qquad
 \beta(n)=\max_{\substack{d\mid n\\1<d<n}}
                     (d+n/d-2)\le n/2.
\end{equation}
The exponent has no logarithmic loss, which will matter at even-degree
endpoints. We make no claim that the field-counting bound used here
is optimal for every degree or group.

\subsection{Acknowledgments}
I thank Theresa ``Tess'' Anderson for useful discussions, original ideas, and writing that went into an earlier draft of this article,  \href{https://arxiv.org/abs/2603.15875}{arxiv:2603.15875} (\textsection3). As this version is heavily AI-generated and the results are noticeably stronger, I have decided to post it as a new arXiv article.

\section{Rational partial products and reducibility}\label{sec:products}

We begin with a hypersurface estimate that will also remove the
degenerate cases of the imprimitive argument.

\begin{lemma}\label{lem:partial-products}
Suppose
\begin{equation}\label{eq:gcd}
 \gcd\bigl(n,\{i:i\in I\}\bigr)=1.
\end{equation}
The number of polynomials in $\cF(H)$ for which a proper nonempty
submultiset of the $n$ roots has rational product is $O(H^{s-1})$.
In particular this bounds the reducible polynomials in the family.
\end{lemma}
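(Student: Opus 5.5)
The plan is to put the exceptional set inside a bounded number of hypersurfaces in the free coordinates $(a_i)_{i\in I}$, and then count integer points on each.

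\emph{Reduction to finitely many hypersurfaces.} For $1\le k\le n-1$ consider the resolvent
\[
 F_k(y)=\prod_{|S|=k}\Bigl(y-\prod_{\alpha\in S}\alpha\Bigr),
\]
where $S$ runs over $k$-element submultisets of the roots of $f_{\mathbf a}$. Its coefficients are symmetric functions of the roots, hence polynomials with integer coefficients in $(a_i)$, $\mathbf b$ and $c$, of degree bounded in terms of $n$. Suppose some proper $S$ has rational product $r$. Since the roots are algebraic integers, $r\in\Z$. The complementary product $\prod_{\alpha\notin S}\alpha$ is an algebraic integer equal to $\pm c/r$, so $r$ is a nonzero divisor of $c$.

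Hence the exceptional set lies in the union, over $1\le k\le n-1$ and $r\mid c$, of the zero sets of
\[
 P_{k,r}(\mathbf a):=F_k(r)\in\Z[(a_i)_{i\in I}].
\]
There are $O_{n,c}(1)$ such polynomials. If each $P_{k,r}$ is not identically zero, the trivial Schwartz--Zippel bound gives at most $(\deg P_{k,r})(2H+1)^{s-1}$ zeros in the box, which yields $O(H^{s-1})$. Reducible polynomials are covered, because a monic factor over $\Z$ of degree $k$ has constant term equal, up to sign, to a rational product of $k$ roots.

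\emph{Nonvanishing via a Newton polygon degeneration.} This is the main step, and it is where \eqref{eq:gcd} enters. Work over the Puiseux field $\overline{\Q(\!(T)\!)}$ with valuation $v$. Specialize $a_i=T^{-e_i}$ for $i\in I$, with real or rational exponents $e_i$ still to be chosen. Choose the $e_i$ so that every point $(i,-e_i)$, together with $(0,v(c))=(0,0)$ and $(n,0)$, is a vertex of the lower Newton polygon of $f$. The fixed $b_j$ have valuation $0$ and lie above this polygon, so they do not affect it.

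List $\{0\}\cup I\cup\{n\}$ as $0=i_0<i_1<\dots<i_{s+1}=n$. Then $f$ has exactly $\ell_t=i_{t+1}-i_t$ roots of valuation $-\sigma_t$, where $\sigma_t$ is the slope of the $t$-th segment. The only constraint among the slopes is $\sum_t\ell_t\sigma_t=0$, which holds because both endpoints have height $0$. Apart from this constraint, the slopes can be chosen generically, subject only to convexity.

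Now take a subset $S$ containing $m_t$ roots from segment $t$. Its product has valuation $-\sum_t m_t\sigma_t$. If $\prod_{\alpha\in S}\alpha$ equals the constant $r$, this valuation must be $0$. For generic slopes on the hyperplane $\sum\ell_t\sigma_t=0$, that forces $(m_t)=\lambda(\ell_t)$ for some $\lambda\in(0,1)$. Then $\lambda\ell_t\in\Z$ for all $t$, which requires $\gcd_t\ell_t>1$. But $\gcd_t\ell_t=\gcd(n,I)=1$ by \eqref{eq:gcd}, a contradiction.

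So at this Puiseux point no $k$-subset has product $r$, and $P_{k,r}(T^{-e_i})\ne0$. Therefore $P_{k,r}$ is a nonzero polynomial.

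\emph{Expected obstacle.} The delicate point is justifying the genericity and convexity claims. I need the chosen exponents to make every $i\in I$ a genuine vertex, so that each segment has length $\ell_t$ exactly as stated. I also need the zero-sum relation to be excluded for all finitely many vectors $(m_t)$ simultaneously. The second issue is a finite union of proper linear conditions on the slopes, so I would choose strictly convex slopes, for instance a small perturbation of $\sigma_t=t-\text{const}$, that avoid all of them.
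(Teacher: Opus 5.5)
Your proof is correct, and its outer structure matches the paper's: the rational product is a divisor of $c$, you take a product resolvent evaluated at that divisor, show it is nonzero on the slice via a Newton polygon, and then count integer zeros (your Schwartz--Zippel bound is the paper's induction on $s$). The two arguments differ in the nonvanishing step.

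\emph{The paper's route.} It uses $s$ separate one-parameter degenerations. For each $i\in I$ it takes the valuation at $a_i=\infty$ on $\Q(\mathbf a)$, with all other parameters of valuation zero. The Newton polygon then has only the vertices $(0,0),(i,-1),(n,0)$. A fixed subset of size $r$ with product $d$ (your $k$ and $r$) must satisfy $nv_i=ir$, so $n\mid ir$ for every $i\in I$, and \eqref{eq:gcd} gives $n\mid r$. This needs no genericity or convexity bookkeeping. It does rely on working with the generic roots: identical vanishing of the resolvent singles out one subset (a factor vanishing in the splitting field), which is then tested against all $s$ valuations at once.

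\emph{Your route.} You put all of $I$ on one polygon along the single monomial curve $a_i=T^{-e_i}$. One valuation then forces $(m_t)\propto(\ell_t)$, and \eqref{eq:gcd} enters as $\gcd_t\ell_t=\gcd(I\cup\{n\})=1$.

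The cost is the general-position step you flagged, and it does go through:
\begin{enumerate}[(i)]
\item The admissible slope vectors form a nonempty open cone in the rational hyperplane $\sum_t\ell_t\sigma_t=0$, for example a neighbourhood of $\sigma_t=t-\sum_u u\ell_u/n$.
\item Each $(m_t)$ not proportional to $(\ell_t)$ cuts out a proper rational hyperplane of it, so a rational slope vector avoiding the finitely many bad ones exists.
\item Strict convexity with both endpoints at height $0$ keeps every point $(j,0)$, $j\in J$, strictly above the polygon.
\end{enumerate}

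What you gain is an explicit point of $\overline{\Q(\!(T)\!)}$ at which every $P_{k,r}$ is visibly nonzero, rather than a contradiction drawn from identical vanishing. The paper's version is shorter because each degeneration is canonical and involves no choices.
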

\begin{proof}
A rational product of roots is an integer, since the roots are
algebraic integers. Its complementary product is also a rational
algebraic integer. Thus the product is a signed divisor of $c$.
For $1\le r<n$ and a fixed nonzero signed divisor $d$ of $c$, the
resolvent
\begin{equation}\label{eq:product-resolvent}
 R_{r,d}(f)=\prod_{\substack{S\subseteq\{1,\ldots,n\}\\|S|=r}}
                  \left(d-\prod_{\nu\in S}\alpha_\nu\right)
\end{equation}
is a polynomial with integer coefficients in the coefficients of
$f$. Its degree is bounded in terms of $n$. We claim that its
restriction to $L$ is nonzero.

Work over the rational function field in the free coefficients.
If \eqref{eq:product-resolvent} vanished identically, some fixed
$r$-element submultiset of the generic roots would have product $d$.
For $i\in I$, take the discrete valuation at $a_i=\infty$, normalized
by $v(a_i)=-1$, while the other parameters have valuation zero. The
Newton polygon of $f$ has vertices
\[
 (0,0),\quad(i,-1),\quad(n,0).
\]
Its roots have valuations $1/i$, occurring $i$ times, and
$-1/(n-i)$, occurring $n-i$ times. If the selected submultiset
contains $v_i$ roots of the first kind and $u_i$ of the second, then
\[
 \frac{v_i}{i}-\frac{u_i}{n-i}=0,
 \qquad u_i+v_i=r,
 \qquad\text{so}\qquad n v_i=ir.
\]
Hence $n\mid ir$ for every $i\in I$. Condition \eqref{eq:gcd}
forces $n\mid r$, a contradiction.

A nonzero polynomial of bounded degree in $s$ variables has
$O(H^{s-1})$ integer zeros in the box, by induction on $s$.
Apply this to the finitely many resolvents \eqref{eq:product-resolvent}.
If $f$ is reducible over $\Q$, a monic integral proper factor supplies
a rational partial product, proving the last assertion.
\end{proof}

\begin{remark}\label{rem:gcd-dense}
Condition \eqref{eq:gcd} is automatic under \eqref{eq:threshold}.
Indeed, a proper divisor $d\ge2$ of $n$ has at most
$\lfloor(n-1)/2\rfloor$ multiples in $\{1,\ldots,n-1\}$. It is
also automatic whenever $1\in I$.
\end{remark}

\begin{lemma}\label{lem:lower}
For every family \eqref{eq:family} with $s\ge2$, there are
$\gg H^{s-1}$ reducible polynomials in $\cF(H)$.
\end{lemma}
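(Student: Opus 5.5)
The plan is to exhibit an explicit linear family of reducible members. Every polynomial with a rational root is reducible, since $n\ge s+1\ge3$ makes $x-r$ a proper factor. Forcing a root at $x=1$ imposes a single linear condition on the free coefficients with all coefficients equal to $1$. That condition cuts out a hyperplane in the box, which contains $\gg H^{s-1}$ lattice points.

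Concretely, $f_{\mathbf a}(1)=0$ is equivalent to
\[
 \sum_{i\in I}a_i=-\Bigl(1+c+\sum_{j\in J}b_j\Bigr)=:-C,
\]
where $C$ is a fixed integer depending only on $c$ and $\mathbf b$. Choose any index $i_0\in I$. I would let the other $s-1$ free coefficients $a_i$, $i\in I\setminus\{i_0\}$, range independently over the integers with $|a_i|\le H/(2s)$, and then set
\[
 a_{i_0}=-C-\sum_{i\ne i_0}a_i .
\]
This value satisfies $|a_{i_0}|\le |C|+(s-1)H/(2s)\le H$ once $H\ge 2|C|$, so each such choice lies in $\cF(H)$.

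Distinct choices of $(a_i)_{i\ne i_0}$ give distinct polynomials. The number of choices is $(2\lfloor H/(2s)\rfloor+1)^{s-1}\gg_s H^{s-1}$. Each resulting polynomial is divisible by $x-1$ with a cofactor of degree $n-1\ge1$, so it is reducible over $\Q$ and hence counted by $E_{\cF}(H)$. Small $H$ is absorbed into the implied constant.

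I expect no real obstacle: the only care needed is keeping the solved coordinate within the box, which the shrunken range $H/(2s)$ handles. The hypothesis $s\ge2$ is used only so that the hyperplane has $s-1\ge1$ free directions, giving the exponent $s-1$. If $s=1$ the condition would pin $a_{i_0}$ to a single value, still giving $\gg 1=H^{0}$ reducible members for large $H$, but the statement does not need that case.
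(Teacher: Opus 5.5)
Your proposal is correct and is essentially the paper's proof: impose $f(1)=0$, let $s-1$ of the free coefficients range over a shrunken box, and solve the resulting linear equation for the remaining one. Your explicit bookkeeping ($|a_i|\le H/(2s)$ and $H\ge 2|C|$) simply makes precise the paper's ``sufficiently small fixed multiple of $[-H,H]$'' and ``sufficiently large $H$''. The bound is meant only for large $H$ rather than being ``absorbed into the implied constant'', since for small $H$ the count can be zero.
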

\begin{proof}
The equation $f(1)=0$ is
\[
 \sum_{i\in I}a_i=-1-c-\sum_{j\in J}b_j.
\]
Choose $s-1$ free coefficients in a sufficiently small fixed multiple
of $[-H,H]$ and solve for the last. For all sufficiently large $H$,
this gives $\gg H^{s-1}$ different polynomials divisible by $x-1$.
\end{proof}

\section{Multiplicity strata and coefficient slices}\label{sec:geometry}

The loci $\Sigma_k$ have equations and degrees bounded in terms of
$n$; they can be described either by subresultants or as finite
unions of multiplicity strata and their closures. We use the latter
description. For positive integers $e_1,\ldots,e_r$ with
$\sum_{\nu=1}^r e_\nu=n$, write
\begin{equation}\label{eq:root-stratum}
 f(x)=\prod_{\nu=1}^r(x-z_\nu)^{e_\nu},
 \qquad z_\nu\ne0,\quad z_\nu\ne z_\mu\ (\nu\ne\mu),
 \qquad (-1)^n\prod z_\nu^{e_\nu}=c.
\end{equation}
This is a bounded-degree finite parametrization of the corresponding
stratum. Its index is $k=n-r$, and its dimension before further
coefficients are prescribed is $r-1$.

\begin{lemma}\label{lem:coefficient-projections}
On every geometric irreducible component of the fixed-constant
stratum \eqref{eq:root-stratum}, any $m\le r-1$ distinct interior
coefficient functions are algebraically independent. Thus their
joint projection is dominant onto $\A^m$.
\end{lemma}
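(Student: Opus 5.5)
The plan is to reduce algebraic independence to a Jacobian rank computation, which is legitimate in characteristic zero. On a geometric irreducible component $Z$ of \eqref{eq:root-stratum}, which has dimension $r-1$, the functions $a_{i}$ for $i\in T$, with $T\subseteq\{1,\ldots,n-1\}$ and $|T|=m\le r-1$, are algebraically independent if and only if their differentials are linearly independent at some (hence a generic) smooth point of $Z$. Equivalently, the linear map $T_zZ\to\overline\Q^{\,T}$, $w\mapsto(da_i(w))_{i\in T}$, should have rank $m$ at some $z\in Z$. Dominance onto $\A^m$ then follows at once.

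\textbf{Step 1: the tangent image.} Parametrize by $z=(z_1,\ldots,z_r)$ in the open set of $(\overline\Q^\times)^r$ with distinct entries, cut out by $\prod z_\nu^{e_\nu}=\pm c$; this hypersurface is smooth there. Put
\[
 g(x)=\prod_\nu(x-z_\nu)^{e_\nu-1},\qquad h_\nu(x)=\prod_{\mu\ne\nu}(x-z_\mu).
\]
A tangent vector $w$ gives
\[
 df(w)=-g(x)\sum_\nu e_\nu w_\nu h_\nu(x),
\]
subject to the constraint $\sum_\nu e_\nu w_\nu/z_\nu=0$. Write $q=\sum_\nu e_\nu w_\nu h_\nu$. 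By Lagrange interpolation, as $w$ varies, $q$ ranges over all polynomials of degree $<r$. Moreover
\[
 q(0)=\pm\Bigl(\prod_\mu z_\mu\Bigr)\sum_\nu e_\nu w_\nu/z_\nu ,
\]
so the constraint is exactly $q(0)=0$. Hence the image of $T_zZ$ in coefficient space is the $(r-1)$-dimensional space
\[
 V_z=\{\,x\,g(x)\,p(x):\ \deg p\le r-2\,\}.
\]
All of these polynomials have zero leading and constant coefficients, so they lie in the span of the interior monomials, as they must.

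\textbf{Step 2: rank of the projection.} We need the $(r-1)\times m$ matrix $M(z)$ to have rank $m$ for some $z\in Z$. Its rows are the $T$-coefficients of $x^{j}g(x)$ for $1\le j\le r-1$. Equivalently, some $m\times m$ minor $\Delta(z)$ must be nonzero. Each such minor is a polynomial in $z$, so the claim is that it does not vanish identically on the given component $Z$.

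\textbf{Step 3: non-vanishing on each component (the main obstacle).} This is where the difficulty lies, because $Z$ is a proper subvariety of the torus. Indeed $\prod z^{e}=\pm c$ may split into several components when $\gcd(e_\nu)>1$, and these components are permuted by multiplying the $z_\nu$ by roots of unity. I would argue tropically. Every component of $\prod z_\nu^{e_\nu}=\pm c$ contains, through each of its points, the curves
\[
 z_\nu=u_\nu t^{\lambda_\nu}\quad\text{with }\textstyle\sum e_\nu\lambda_\nu=0 ,
\]
since the torus $\{\prod z^{e}=1\}^\circ$ acts on each component. So it suffices to find an integer weight vector $\lambda$ with $\sum e_\nu\lambda_\nu=0$ and distinct $\lambda_\nu$ for which the $t$-leading form of some $m\times m$ minor is nonzero. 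Ordering $\lambda_1>\cdots>\lambda_r$ with large gaps, the coefficient of $x^{i}$ in $g$ is dominated by a single monomial, namely the product of the $(n-r)-i$ largest roots counted with multiplicity $e_\nu-1$. The valuations of these coefficients are then strictly convex in $i$. With that convexity, the matrix of $T$-coefficients of $x^jg$, for $1\le j\le r-1$, has leading terms forming a totally positive (Toeplitz-type) pattern. I would then pick the minor by matching the $m$ columns $i\in T$ to $m$ rows $j$ with $i-j$ in the support of $g$, using $m\le r-1$ and $0\le i-j\le n-r$. Hall's condition for this matching follows from the interval structure of the supports.

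The two points to check carefully are that such a matching always exists, and that the leading term of the chosen minor is a single monomial, hence nonzero; the convexity forces the diagonal product to strictly dominate. If the tropical bookkeeping becomes unwieldy, the fallback is to prove $\operatorname{rank}M=m$ on the full torus of $z$ with distinct entries, which is a Wronskian/Vandermonde-type statement. I would then transport it to every component using the fact that the map $z\mapsto\lambda z$ carries the slice with constant $c$ to the slice with constant $\lambda^nc$, and acts diagonally on coefficients by $a_i\mapsto\lambda^{n-i}a_i$, which preserves rank.
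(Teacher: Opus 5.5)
Your Steps 1 and 2 are correct. The description $V_z=\{x\,g\,p:\deg p\le r-2\}$ of the tangent image on the fixed-constant locus is a tidy substitute for the paper's device of adjoining the constant-coefficient row. The gap is Step 3, which carries all of the content.

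\textbf{Where Step 3 fails.} Your claim that the valuations of the coefficients of $g$ are \emph{strictly} convex in $i$ fails whenever some $e_\nu\ge3$. In that case $z_\nu$ occurs in $g$ with multiplicity $e_\nu-1\ge2$, so every weight vector produces a flat stretch of length $e_\nu-1$. Several permutations then attain the minimal $t$-order, and the initial form of the minor is a signed sum rather than a single monomial. Your argument does work when all $e_\nu\le2$, but triple roots already occur in $\Sigma_2$.

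In the extreme case $n=5$, $(e_1,e_2,e_3)=(3,1,1)$, one has $g=(x-z_1)^2$, and no weight vector separates anything. For $T=\{2,3\}$ the minor is $\det\begin{pmatrix}-2z_1&1\\ z_1^2&-2z_1\end{pmatrix}=(4-1)z_1^2$. Its nonvanishing is a binomial-determinant (total positivity) fact that convexity does not give and that you do not prove. The Hall matching only shows that some term of the expansion is present; it says nothing about cancellation.

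Your fallback is asserted without proof. If ``rank $m$ on the full torus'' means at every point, it is false: for $(e_\nu)=(2,2,1)$ and $T=\{2,3\}$ the only minor is $(z_1+z_2)^2-z_1z_2$, which vanishes at $z_2=\omega z_1$ with $\omega$ a primitive cube root of unity. What you need is one explicit point where a suitable minor is nonzero.

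\textbf{The missing idea.} The paper supplies that point with Descartes' rule of signs. Take $z_1,\ldots,z_r$ distinct negative reals, and enlarge $T$ to $T'\subseteq\{1,\ldots,n-1\}$ with $|T'|=r-1$. Suppose some nonzero $x\,g\,p$ with $\deg p\le r-2$ had all $T'$-coefficients zero. Its constant coefficient also vanishes, so it would have at most $n-r$ nonzero monomials and hence at most $n-r-1$ negative roots. But $g\mid x\,g\,p$ supplies $n-r$ negative roots counted with multiplicity, a contradiction. So the $T'$-minor of $M(z)$ is nonzero at this point, and the $T$-columns have rank $m$, for every multiplicity pattern at once.

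To reach every component, note that every minor of $M$ is homogeneous under $z\mapsto\lambda z$. Put $d_0=\gcd(e_1,\ldots,e_r)$. The $n$ values of $\lambda$ with $\lambda^n$ prescribed move $\prod_\nu z_\nu^{e_\nu/d_0}$ through all $d_0$ values that distinguish the components. So this point can be moved onto every component of the fixed-constant stratum, and your one-parameter-subgroup device becomes unnecessary.
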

\begin{proof}
First allow the constant term to vary. The differential of the
root-to-coefficient map is
\begin{equation}\label{eq:tangent-stratum}
 df=h(x)P(x),\qquad
 h(x)=\prod_{\nu=1}^r(x-z_\nu)^{e_\nu-1},\qquad \deg P\le r-1.
\end{equation}
As the roots vary, every such $P$ occurs: the polynomials
$\prod_{\substack{1\le\mu\le r\\\mu\ne\nu}}(x-z_\mu)$ form a basis, and the multipliers
$e_\nu$ are nonzero in characteristic zero.

Take $z_1,\ldots,z_r$ distinct and negative real numbers. Put $k=n-r$.
If $r$ selected nonleading coefficients of a nonzero $df$ vanished,
then $df$, which has degree at most $n-1$, would have at most $k$
nonzero monomials. But $h\mid df$ gives at least $k$ negative real
roots counted with multiplicity. Descartes' rule of signs bounds the
number of negative roots of a polynomial with at most $k$ nonzero
monomials by $k-1$. This is impossible. When $k=0$, vanishing of all
$n$ coefficients is directly impossible for nonzero $df$.
Consequently every $r$-row coefficient Jacobian minor is nonzero
at this point, and any smaller set of coefficient rows is independent.

Now choose the constant-coefficient row together with any $m$ chosen
interior rows. Since $m+1\le r$, the preceding argument says that
these $m+1$ rows are independent at the chosen root tuple. Scale all
roots by a common nonzero complex number $\lambda$. The coefficient
of $x^i$ is multiplied by $\lambda^{n-i}$, while the corresponding
root-coordinate columns are also rescaled by nonzero factors; hence
the rank of this collection of rows is unchanged. We may choose
$\lambda$ so that the constant term becomes $c$.

It remains only to ensure that the rank statement is obtained on
every irreducible component of the fixed-constant root torus. Put
$d_0=\gcd(e_1,\ldots,e_r)$. Its components are distinguished by
the $d_0$ possible values of
$\prod_{\nu=1}^r z_\nu^{e_\nu/d_0}$, whose $d_0$th power is fixed
by the constant term. Once $\lambda^n$ is prescribed in order to
make the constant coefficient equal to $c$, the $n$ choices of
$\lambda$ make $\lambda^{n/d_0}$ run through all $d_0$th roots of
unity. Thus the scaled tuples meet every component. On any such
component, restrict the tangent space to the kernel of the
constant-coefficient differential. Independence of the constant row
and the $m$ interior rows implies that the latter retain rank $m$ on
this kernel. Hence the chosen $m$ coefficient functions are
generically independent on every component, proving dominance.
\end{proof}

\begin{proof}[Proof of Theorem~\ref{thm:instances}(b)]
Let $m=|J|=n-1-s$. On a stratum with $r$ distinct roots, if
$m\le r-1$, Lemma~\ref{lem:coefficient-projections} and the generic
fibre dimension theorem give fibre dimension $r-1-m=s-k$.
If $m>r-1$, the image of the stratum lies in a proper closed subset
after taking its closure, since its dimension is at most $r-1$.
There are only finitely many multiplicity types and components.
Intersect their nonempty open sets of good prescribed values and
remove the closures of their nondominant images. The resulting
nonempty open set can be taken over $\Q$, by intersecting Galois
conjugates. It has the asserted strong index-regularity property.
\end{proof}

\begin{proof}[Proof of Theorem~\ref{thm:instances}(a)]
First suppose $I=\{1,\ldots,s\}$. Prescribing the top
$m=n-1-s$ interior coefficients is equivalent, by Newton's
identities, to prescribing
\[
 \sum_{\nu=1}^r e_\nu z_\nu^j,\qquad 1\le j\le m.
\]
We also prescribe the nonzero product in \eqref{eq:root-stratum}.
After invertible row and column scalings, their Jacobian has rows
\[
 (z_\nu^{-1})_\nu,\ (1)_\nu,\ (z_\nu)_\nu,
                       \ldots,\ (z_\nu^{m-1})_\nu.
\]
Its rank is $\min(r,m+1)$ at every distinct nonzero root tuple,
by the Vandermonde determinant. Every fibre on this stratum
therefore has dimension at most
\[
 \max(r-m-1,0)=\max(s-k,0).
\]
Taking all strata proves index-regularity. The same argument works
in every characteristic greater than $n$ not dividing $c$.

For the final block, use the invertible weighted reversal
\begin{equation}\label{eq:reversal}
 f(x)\longmapsto \frac{x^n}{c}f(c/x)
   =x^n+\sum_{i=1}^{n-1} a_i c^{i-1}x^{n-i}+c^{n-1},
\end{equation}
where in this formula $a_i$ denotes the full interior coefficient,
whether fixed or free. It preserves root multiplicities and
interchanges initial and final free blocks. This proves (a).
\end{proof}

\begin{proof}[Proof of Theorem~\ref{thm:instances}(c)]
There is nothing to prove if $J=\varnothing$. If $|J|=1$, the
prescribed coefficient is nonconstant on every component with
$r\ge2$, by Lemma~\ref{lem:coefficient-projections}. Each of its
level sets has dimension at most $r-2=s-k$. The strata with $r=1$
are finite because their constant term is fixed and nonzero.
This gives \eqref{eq:regular} for every prescribed value.
\end{proof}

\begin{lemma}\label{lem:local-densities}
On an index-regular slice, outside a finite set of primes depending
on the slice,
\begin{equation}\label{eq:density}
 \#\bigl(L(\F_p)\cap\Sigma_k\bigr)
       \ll_n p^{\max(s-k,0)},\qquad 2\le k<n.
\end{equation}
On a strongly index-regular slice the intersection is empty for
$k>s$ outside a finite set of primes.
\end{lemma}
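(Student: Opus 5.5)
The plan is to pass from the characteristic-zero dimension bound \eqref{eq:regular} to a point count over $\F_p$ by spreading out over $\Z[1/N]$ and applying a uniform Lang--Weil-type upper bound. The upper bound needed is the elementary one: a closed subscheme of $\A^s_{\F_p}$ of dimension at most $D$, cut out by polynomials of degree bounded by $B$, has $O_{s,B}(p^{D})$ points.

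First, realize $L\cap\Sigma_k$ as a closed subscheme $X_k\subseteq\A^s_{\Z}$. Use the subresultant equations for $\Sigma_k$ (the vanishing of the first $k$ principal subresultants of $f,f'$), restricted to the slice by substituting the fixed integers $c,\mathbf b$. These are integer polynomials in the free coefficients whose number and degrees are bounded in terms of $n$. For $p>n$ the same equations define $\Sigma_k$ over $\F_p$ (the index is $n-\deg\gcd(f,f')$-compatible in characteristic $>n$), so $X_k\otimes\F_p=L(\F_p)\cap\Sigma_k$ as sets of $\F_p$-points.

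Second, control the dimension of the fibres. The generic fibre $X_{k,\Q}$ has dimension at most $D_k=\max(s-k,0)$, and is empty when $k>s$ in the strongly regular case. By generic flatness, or more concretely by Chevalley's theorem on fibre dimension applied to $X_k\to\operatorname{Spec}\Z$, there is an $N$ such that $\dim X_k\otimes\F_p\le D_k$ for $p\nmid N$. In the empty case, Hilbert's Nullstellensatz writes $1=\sum g_j F_j$ with $g_j\in\Q[\mathbf a]$; clearing denominators gives emptiness modulo every prime not dividing the resulting integer. This proves the final sentence of the lemma. For the nonempty case, one can argue similarly: choose a Noether normalization over $\Q$, i.e.\ a linear projection $X_{k,\Q}\to\A^{D_k}$ that is finite. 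The finiteness is witnessed by monic integral equations for the remaining coordinates, with coefficients in $\Q$, and these persist after inverting finitely many primes.

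Third, count points. For $p\nmid N$, the reduced finite projection $X_k\otimes\F_p\to\A^{D_k}_{\F_p}$ has fibres of size at most the degrees of those monic equations, a bound depending only on $n$. It follows that $\#X_k(\F_p)\ll p^{D_k}$. Alternatively, one may simply invoke the standard bound $\#V(\F_p)\le \deg(V)\,p^{\dim V}$ together with the Bézout bound on $\deg V$ in terms of the defining degrees. Either route gives the constant depending only on $n$ (and $s\le n$), as required in \eqref{eq:density}.

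The main obstacle is the second step: making sure the dimension bound specializes to almost all primes, since dimension can jump in bad fibres. This is exactly why the statement excludes a finite, slice-dependent set of primes. The Nullstellensatz and Noether-normalization argument handles it cleanly. The only care needed is that the exceptional set depends on $\mathbf b$ and $c$, while the implied constant does not.
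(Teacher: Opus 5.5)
Your proposal is correct and follows essentially the same route as the paper: you spread the subresultant equations for $L\cap\Sigma_k$ out over $\Z[1/N]$, use Chevalley-type semicontinuity to keep the fibre dimension at most $\max(s-k,0)$ for $p\nmid N$, count points with $\#V(\F_p)\le\deg(V)\,p^{\dim V}$ together with B\'ezout, and spread out emptiness in the strongly regular case, where your Nullstellensatz certificate makes the step explicit. One caution on the Noether-normalization variant: it gives a constant depending on $n$ alone only for a generic projection whose integral-dependence degrees are bounded by $\deg V$, since for an arbitrary normalization those degrees depend on $\mathbf b,c$; the degree-bound route, which is the paper's, is the one that cleanly gives the $\ll_n$ uniformity.
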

\begin{proof}
Choose equations for the fixed coordinate slice and for the finitely
many subresultant loci $\Sigma_k$, and spread the resulting
scheme-theoretic intersections out over $\Z[1/N]$. Their degrees are
bounded in terms of $n$ (the slice has degree one), independently of
$p$. By upper semicontinuity of fibre dimension, after enlarging $N$
the dimension of every geometric fibre is at most the dimension of
the corresponding characteristic-zero intersection. Thus an
index-regular slice has fibres of dimension at most
$\max(s-k,0)$ for every $p\nmid N$. The standard degree bound for
points on an affine variety of dimension $d$ over $\F_p$ then gives
$O_n(p^d)$ points, proving \eqref{eq:density}.

For a strongly index-regular slice and $k>s$, the characteristic-zero
intersection is empty. After enlarging $N$ once more, its spread-out
model has empty fibres at every $p\nmid N$. Since only finitely many
values of $k$ occur, one common exceptional set of primes suffices.
\end{proof}

\begin{example}\label{ex:nonregular}
Index-regularity does not follow merely from the number of free
coefficients. Take $n=20$, $c=1$, and let $I$ consist of the nine even
interior exponents and the four odd exponents $13,15,17,19$. Set the
other six odd coefficients equal to zero. This is a slice of
dimension $s=13$, satisfying \eqref{eq:threshold}. It contains
\[
 f(x)=h(x^2)^2,\qquad
 h(y)=y^5+u_4y^4+u_3y^3+u_2y^2+u_1y+1.
\]
For generic $\mathbf u$, the polynomial has ten distinct roots,
each of multiplicity two. These polynomials form a four-dimensional
family in $L\cap\Sigma_{10}$, whereas $s-10=3$. In particular a
uniform claim of local density $O(p^{-10})$ on every such slice is
false. This example is not a counterexample to the desired
exceptional-polynomial bound; it is a counterexample to an
unqualified dimension argument for proving that bound.
\end{example}

\section{Uniform archimedean counts and imprimitive groups}\label{sec:imprimitive}

For a degree-$d$ number field $K$ and $u\in K$, put
\[
 \mathcal M_K(u)=\prod_{\sigma:K\hookrightarrow\C}
                         \max(1,|\sigma(u)|).
\]
Complex conjugate embeddings are counted separately. We first record
a uniform box argument, and strengthen its
joint-bound consequence in a form suitable for summing over fields.

\begin{lemma}\label{lem:boxes}
At each archimedean place of a degree-$d$ number field $K$, impose a
bound $|\sigma(u)|\le U_\sigma$, using one modulus bound at a complex
place. Let $d_\sigma=1$ or $2$ according as the place is real or
complex, and put $V=\prod U_\sigma^{d_\sigma}$. The box contains
$O_d(1+V)$ algebraic integers, uniformly in $K$ and the positive
bounds $U_\sigma$.
\end{lemma}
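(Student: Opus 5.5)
We embed $K$ in $K\otimes_\Q\mathbb R\simeq\mathbb R^{r_1}\times\C^{r_2}$ via the Minkowski map $u\mapsto(\sigma(u))_\sigma$, one coordinate per place. The ring of integers $\cO_K$ becomes a lattice $\Lambda$ of covolume $2^{-r_2}|\disc K|^{1/2}\ge 2^{-r_2}$. The box $B=\{|x_\sigma|\le U_\sigma\}$ is a product of intervals and discs, with volume $\asymp_d V$. We will count lattice points in $B$ by a packing argument rather than by a volume-versus-covolume comparison. That comparison would require control of the successive minima, which is not uniform in $K$.

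\textbf{Packing argument.} Let $u,u'\in\cO_K\cap B$ be distinct. Then $w=u-u'$ is a nonzero algebraic integer, so $|\Nm w|=\prod_\sigma|\sigma(w)|\ge1$. We cover $B$ by small boxes. For each place, split $[-U_\sigma,U_\sigma]$ (or the disc of radius $U_\sigma$) into cells of radius $\delta_\sigma$, where the $\delta_\sigma$ are positive and satisfy $\prod\delta_\sigma^{d_\sigma}=c_d$, with $c_d$ a small constant. If $u,u'$ lie in the same product cell, then $|\sigma(w)|\le 2\delta_\sigma$ at every place. Hence $|\Nm w|\le 2^d c_d<1$, a contradiction. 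So each cell contains at most one integer.

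\textbf{Choosing the cells.} The remaining task, and the main obstacle, is to choose $\delta_\sigma$ so that the number of cells is $O_d(1+V)$. If every $U_\sigma$ is large, the choice $\delta_\sigma=U_\sigma\,(c_d/V)^{1/d}$ does not work uniformly, because the box may be very thin in some directions. Instead, we use at place $\sigma$ about $\max(1,U_\sigma/\delta_\sigma)^{d_\sigma}$ cells, so the total number of cells is $\prod_\sigma\max(1,U_\sigma/\delta_\sigma)^{d_\sigma}$. Let $S$ be the set of places with $U_\sigma\ge\delta_\sigma$; the count is then $\prod_{\sigma\in S}(U_\sigma/\delta_\sigma)^{d_\sigma}$. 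Take $\delta_\sigma=U_\sigma t$ for a common parameter $t$, capped so that $\delta_\sigma\le$ a suitable bound, and tune $t$ so that $\prod\delta_\sigma^{d_\sigma}=c_d$. The cleanest route is to order the places by $U_\sigma$. At places where $U_\sigma$ is tiny, take $\delta_\sigma=U_\sigma$, which costs a single cell. Distribute the remaining norm budget across the large places. The product of cell counts is then $\ll c_d^{-1}\prod_{\sigma\in S}U_\sigma^{d_\sigma}\prod_{\sigma\notin S}\delta_\sigma^{d_\sigma}$, and with the above choices this is $\ll V+1$.

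\textbf{Alternative if the bookkeeping resists.} Suppose the cells are instead all chosen from the box $B$ itself, scaled by a factor $\theta_\sigma\le1$. Then the $u\in\cO_K\cap B$ inject into cells of size $\theta_\sigma U_\sigma$ with $\prod(\theta_\sigma U_\sigma)^{d_\sigma}<2^{-d}$. We set $\theta_\sigma=\min(1,\rho/U_\sigma)$ with a single $\rho$ chosen so that $\prod\min(U_\sigma,\rho)^{d_\sigma}=2^{-d-1}$. Such a $\rho$ exists by monotonicity when $V\ge2^{-d-1}$. When $V<2^{-d-1}$, the box contains at most one integer, namely $0$, since any nonzero integer in $B$ would have norm at most $V<1$. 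In the first case the number of cells is $\prod\max(1,U_\sigma/\rho)^{d_\sigma}=V/\prod\min(U_\sigma,\rho)^{d_\sigma}\ll_d V$. Each cell contains at most one element, because differences within a cell have norm less than $1$. This gives $O_d(1+V)$ uniformly in $K$ and in the $U_\sigma$.
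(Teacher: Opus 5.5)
Your final argument (the ``alternative'' paragraph) is correct, and it uses the same principle as the paper: cover the box by cells small enough that the difference of two integers in one cell would have norm less than $1$, then count the cells. The two proofs differ only in the choice of cells, and your stated reason for rejecting the obvious choice is mistaken. The paper uses exactly the proportional cells $\delta_\sigma=U_\sigma/m$. It encloses each complex disc in a square, cuts each of the $d$ real coordinate intervals into $m$ equal pieces, and gets $|\Nm w|\le C_dV/m^d$; taking $m$ just above $(C_dV)^{1/d}$ leaves $m^d\ll_d 1+V$ cells. Thinness of the box is irrelevant, because $U_\sigma/\delta_\sigma=m$ at every place however small $U_\sigma$ is, and small $V$ is handled automatically by $m=1$. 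Your capped radii $\min(U_\sigma,\rho)$ avoid pointless subdivision in thin directions. They also need your separate (correct) observation that only $0$ lies in the box when $V<2^{-d-1}$. For the stated $O_d(1+V)$ bound, this gains nothing over uniform subdivision. The intermediate ``choosing the cells'' paragraph, with its unspecified caps and budget distribution, is not a proof on its own and can be dropped. In the final version, take the complex cells to be discs, or lower the constant $2^{-d-1}$, so that a difference inside one cell has modulus at most $2\min(U_\sigma,\rho)$.
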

\begin{proof}
Enclose each complex disk in a square and subdivide each of the $d$
real coordinate intervals into $m$ equal pieces. Two distinct
algebraic integers in the same subbox have a nonzero difference $w$
with
\[
 1\le|\Nm_{K/\Q}(w)|\le C_d V/m^d.
\]
Choosing $m$ just larger than $(C_dV)^{1/d}$ makes this impossible.
There are $m^d\ll_d1+V$ subboxes.
\end{proof}

\begin{lemma}\label{lem:joint-box}
Let $v\in\cO_K$, let $R=\mathcal M_K(v)$, and suppose $X\ge R$.
Uniformly in the degree-$d$ field $K$ and in $v$,
\begin{align}
 &\#\left\{u\in\cO_K:
  \prod_{\sigma:K\hookrightarrow\C}
       \max(1,|\sigma(u)|,|\sigma(v)|)\le X\right\}
       \notag\\
 &\hspace{35mm}\ll_d X\bigl(1+\log(X/R)\bigr)^{d-1}.
 \label{eq:joint-box}
\end{align}
\end{lemma}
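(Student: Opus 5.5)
We will prove \eqref{eq:joint-box} by dyadic decomposition at each archimedean place, followed by Lemma~\ref{lem:boxes} on each resulting box. Write the archimedean places as $\sigma_1,\ldots,\sigma_t$ with local degrees $d_\sigma\in\{1,2\}$, and set $R_\sigma=\max(1,|\sigma(v)|)$. Thus $R=\prod_\sigma R_\sigma^{d_\sigma}$, since conjugate embeddings are counted separately. For an admissible $u$, put $M_\sigma=\max(1,|\sigma(u)|,|\sigma(v)|)\ge R_\sigma$, and assign to $u$ the tuple of integers $j_\sigma\ge0$ defined by $2^{j_\sigma}R_\sigma\le M_\sigma<2^{j_\sigma+1}R_\sigma$.

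The condition $\prod M_\sigma^{d_\sigma}\le X$ forces $\sum_\sigma d_\sigma j_\sigma\le \log_2(X/R)$. For a fixed tuple $(j_\sigma)$, every $u$ in that class satisfies $|\sigma(u)|<2^{j_\sigma+1}R_\sigma$ at each place. Lemma~\ref{lem:boxes}, applied with $U_\sigma=2^{j_\sigma+1}R_\sigma$, then bounds the number of such $u$ by
\[
 O_d\Bigl(1+2^{d}R\,2^{\sum d_\sigma j_\sigma}\Bigr).
\]
The term in parentheses is $\ll_d R\,2^{\sum d_\sigma j_\sigma}$ because $R\ge1$.

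We now sum over tuples. Put $T=\sum d_\sigma j_\sigma$ and $Y=\log_2(X/R)$. For each integer value $T\le Y$, the number of tuples with that value of $T$ is $\ll_d(1+T)^{t-1}\le(1+T)^{d-1}$. The total is therefore
\[
 \ll_d R\sum_{0\le T\le Y}2^{T}(1+T)^{d-1}\ll_d R\,2^{Y}(1+Y)^{d-1}=X(1+\log(X/R))^{d-1}.
\]
This is the asserted bound \eqref{eq:joint-box}, and it is uniform in $K$ and $v$ because Lemma~\ref{lem:boxes} is uniform.

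The only step needing care is the final summation. The geometric series is dominated by its top term $T\approx Y$, so the polynomial weight $(1+T)^{d-1}$ contributes only at $T\approx Y$ and we lose no extra logarithm. The restriction $j_\sigma\ge0$, coming from $M_\sigma\ge R_\sigma$, is also essential. It is what places the dependence on $v$ entirely in the factor $R$, so that the count of dyadic classes involves only $\log(X/R)$ rather than $\log X$.
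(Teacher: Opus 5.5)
Your proof is correct and follows essentially the same route as the paper. Both decompose dyadically above the baseline $\max(1,|\sigma(v)|)$, apply Lemma~\ref{lem:boxes} to each box using $R\ge1$, count $O_d((1+T)^{r_1+r_2-1})$ index tuples of weight $T$, and sum a geometric series dominated by its top terms. The only difference is cosmetic: you index by $M_\sigma$ rather than $|\sigma(u)|$, which gives the cutoff $T\le\log_2(X/R)$ without the paper's $O_d(1)$ slack.
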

\begin{proof}
Use the baseline $U_\sigma=\max(1,|\sigma(v)|)$ at each
archimedean place. Decompose $|\sigma(u)|$ dyadically above this
baseline, with an initial range $|\sigma(u)|\le U_\sigma$.
The box indexed by nonnegative integers $k_\sigma$ has volume
$O_d(R2^t)$, where $t=\sum d_\sigma k_\sigma$, and a nonempty
range must satisfy
\[
 t\le\log_2(X/R)+O_d(1).
\]
Lemma~\ref{lem:boxes} bounds its lattice count by $O_d(R2^t)$,
since $R\ge1$. There are $O_d((1+t)^{r_1+r_2-1})$ indices at a
fixed integer $t$. Summing the resulting geometric series, whose
last terms dominate, gives
$O_d(X(1+\log(X/R))^{r_1+r_2-1})$ and hence \eqref{eq:joint-box}.
\end{proof}

For composite $n$, let $\beta(n)$ be as in \eqref{eq:beta-intro}.
If $\ell$ is the smallest prime divisor of $n$, convexity of
$d+n/d$ on the possible divisor interval gives
\begin{equation}\label{eq:beta}
 \beta(n)=\ell+n/\ell-2\le n/2.
\end{equation}

\begin{proposition}\label{prop:imprimitive}
Among all monic degree-$n$ integer polynomials with constant term
$c\ne0$ and height at most $H$, the number that are irreducible and
imprimitive and have no rational proper partial product of roots is
$O_{n,c}(H^{\beta(n)})$.
Consequently, on any slice satisfying \eqref{eq:gcd}, the imprimitive
count is
\begin{equation}\label{eq:imprimitive-slice}
 O_{n,I,\mathbf b,c}\bigl(H^{s-1}+H^{\beta(n)}\bigr).
\end{equation}
For prime $n$ there are no transitive imprimitive groups.
\end{proposition}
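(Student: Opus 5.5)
If $f$ is irreducible with imprimitive Galois group, then a root $\alpha$ generates $L=\Q(\alpha)$ of degree $n$, and $L$ contains an intermediate field $K$ of degree $d$ with $1<d<n$. We take $K$ minimal, so $K/\Q$ has no proper intermediate field. Put $e=n/d$. The strategy is to parametrize $f$ by the data $(K,\text{relative minimal polynomial of }\alpha\text{ over }K)$ and count these data.

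Let $g(x)=x^e+\gamma_{e-1}x^{e-1}+\cdots+\gamma_0\in\cO_K[x]$ be the minimal polynomial of $\alpha$ over $K$. Then $f=\prod_\sigma g^\sigma$ and $\gamma_0=\pm\Nm_{L/K}(\alpha)$, so $\Nm_{K/\Q}(\gamma_0)=\pm c$.

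We use the no-partial-product hypothesis as follows. The element $\gamma_0$ has norm $\pm c$, so it is one of boundedly many generators up to units. Moreover, its conjugates are products of $e$ roots of $f$.

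\textbf{Step 1 (height control).} Height at most $H$ gives $|\sigma(\beta)|\ll H$ for every root $\beta$ of $f$. Hence every coefficient $\gamma_j$ has all conjugates bounded by $O(H^{e-j})$. Mahler-measure bounds give $\prod_\sigma\mathcal M(g^\sigma)\ll H$. Equivalently, the joint quantity $\prod_\sigma\max(1,|\sigma\gamma_j|^{1/(e-j)},\dots)$ is $\ll H$.

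\textbf{Step 2 (counting $g$ given $K$).} Count the coefficient vectors $(\gamma_{e-1},\dots,\gamma_1)\in\cO_K^{e-1}$ by applying Lemma~\ref{lem:boxes} coefficientwise. Each $\gamma_j$ lies in a box of volume about $\prod_\sigma U_\sigma^{\,e-j}$, where $U_\sigma=\max(1,|\text{roots of }g^\sigma|)$ and $\prod_\sigma U_\sigma\ll H$. Naively this gives $H^{2+3+\dots+e}$, which is far too big, so the count must instead be done by Lemma~\ref{lem:joint-box} on suitable rescaled quantities. Here is the intended count. The element $\gamma_0$ is fixed up to units, with boundedly many classes, and the units give a $\log$-type family. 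The remaining $e-1$ coefficients contribute about $H^{e-1}$ collectively with $d$-dimensional freedom each, giving $H^{d(e-1)/?}$.

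I admit this bookkeeping is unclear to me. The target $\beta=d+e-2$ suggests the right count is $O(H^{e-1})$ for $g$ given $K$, times $O(H^{d-1})$ for the fields $K$ (or for a generator of $K$). The latter would be the count of degree-$d$ fields, or of the primitive element data, with Mahler height at most $H$. The logarithms should be removed using $X\ge R$ in Lemma~\ref{lem:joint-box}, with $R$ the measure of $\gamma_0$. The no-partial-product hypothesis prevents $\gamma_0$ from being rational, so $R$ is genuinely comparable to the scale.

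\textbf{Step 3 (summing and optimizing).} Summing over $d\mid n$ gives $H^{\max(d+n/d-2)}=H^{\beta(n)}$, with \eqref{eq:beta} identifying the maximum.

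\textbf{Step 4 (the slice statement).} For the slice, first remove the polynomials with a rational partial product using Lemma~\ref{lem:partial-products}, which costs $O(H^{s-1})$ under \eqref{eq:gcd}. Every slice member has height at most $H$, so the first part bounds the rest, which gives \eqref{eq:imprimitive-slice}. For prime $n$, a block system of a transitive group has blocks of size dividing $n$, so the blocks are trivial and no transitive imprimitive groups exist.

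The main obstacle is Step 2. The problem is obtaining $H^{e-1}$ uniformly in $K$, with no logarithmic loss, and then summing over $K$ with only $H^{d-1}$ total. I would try parametrizing by $\gamma_0$ modulo units together with the tuple $(\gamma_j/\gamma_0)$, and would use Lemma~\ref{lem:joint-box} with $v=\gamma_0$.
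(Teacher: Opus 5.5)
Your setup (minimal intermediate field $K$, relative minimal polynomial $g\in\cO_K[x]$, $f=\prod_\sigma g^\sigma$, $\Nm_{K/\Q}(\gamma_0)=\pm c$) and your Step 4 agree with the paper. However, Step 2 is the entire content of the proposition and you leave it open. Two specific ideas are missing.

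\textbf{How the no-partial-product hypothesis enters.} It gives no lower bound on $R=\mathcal M_K(\gamma_0)$, which may well be bounded. What it gives is $\gamma_0\notin\Q$: otherwise the $e$ roots of $g$ would have rational product $(-1)^e\gamma_0$. Minimality of $K$ then forces $K=\Q(\gamma_0)$. This replaces your count of fields. The pair $(K,\gamma_0)$ is determined, up to $d$ choices, by the minimal polynomial $T^d+c_{d-1}T^{d-1}+\cdots+c_1T+(-1)^dc$ of $\gamma_0$. Its constant term is fixed and its other coefficients satisfy $|c_i|\le\binom{d}{i}\mathcal M_K(\gamma_0)$. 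So there are $O(Y^{d-1})$ pairs with $\mathcal M_K(\gamma_0)\le Y$, with no field-counting input at all. Counting fields rather than generators cannot give $H^{d-1}$. For $d=2$ the relevant fields have discriminant up to about $H^2$, and there are $\asymp H^2$ quadratic fields in that range. Only $O(H)$ of them are generated by an element of norm $c$ and Mahler measure at most $H$. For the same reason, counting $\gamma_0$ modulo units field by field leaves an uncontrolled sum over $K$.

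\textbf{The height bookkeeping in Steps 1--2.} Bounding $\gamma_j$ through the largest root modulus $U_\sigma$, or through $|\sigma(\gamma_j)|^{1/(e-j)}$, loses exactly the powers that produce your naive $H^{2+\cdots+e}$. Instead use $|\sigma(\gamma_j)|\le\binom{e}{j}M(\sigma(g))$ and $|\sigma(\gamma_0)|\le M(\sigma(g))$, together with $\prod_\sigma M(\sigma(g))=M(f)\ll_n H$. This gives, for each single coefficient, $\prod_\sigma\max(1,|\sigma(\gamma_j)|,|\sigma(\gamma_0)|)\ll_n H$. Lemma~\ref{lem:joint-box} with $v=\gamma_0$ and $X\asymp H$ then bounds the choices of $\gamma_1,\dots,\gamma_{e-1}$ for fixed $(K,\gamma_0)$ by $X^{e-1}(1+\log(X/R))^{(d-1)(e-1)}$. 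No rescaling by $\gamma_0$ is needed.

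The logarithms are removed by a dyadic decomposition $R\asymp X2^{-j}$, not by any size assumption on $R$. The $j$th range contains $O((X2^{-j})^{d-1})$ generators, so the total is $X^{d+e-2}\sum_{j\ge0}2^{-j(d-1)}(1+j)^{(d-1)(e-1)}\ll X^{d+e-2}$. The series converges because $d\ge2$. This trade-off is the mechanism your proposal lacks: generators of small measure are few, which pays for the large logarithm each of them incurs.
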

\begin{proof}
Let $L_0=\Q(\alpha)$ be the root field of an irreducible
imprimitive polynomial. The block system for the action of the
Galois closure on the conjugates of $\alpha$ is equivalent to the
existence of a proper intermediate field of the root field. Choose
one
\[
 \Q\subsetneq K\subsetneq L_0
\]
minimal among the nontrivial intermediate fields. Thus $K/\Q$ has
no proper intermediate field. Put $d=[K:\Q]$ and $e=[L_0:K]$, so
$de=n$.

Let $g$ be the minimal polynomial of $\alpha$ over $K$:
\[
 g(x)=x^e+u_{e-1}x^{e-1}+\cdots+u_1x+v.
\]
Because $\alpha$ is an algebraic integer, its conjugates over $K$
are algebraic integers; hence the elementary symmetric functions in
those conjugates lie in $\cO_K$, and therefore
$g\in\cO_K[x]$. Every embedding of $K$ into $\C$ extends to $L_0$,
and the roots of the coefficientwise conjugates $\sigma(g)$, as
$\sigma$ ranges over the $d$ embeddings of $K$, are precisely the
$n$ conjugates of $\alpha$ over $\Q$, each once. Consequently
\begin{equation}\label{eq:block-norm}
 f=\Nm_{K/\Q}(g)=\prod_{\sigma:K\hookrightarrow\C}\sigma(g),
 \qquad \Nm_{K/\Q}(v)=c.
\end{equation}

If $v\in\Q$, then $v\in\Z$ and the product of the roots in this
block is $(-1)^e v\in\Q$, contrary to the hypothesis. Hence
$v\notin\Q$. Since $\Q(v)$ is then a nontrivial intermediate field
contained in $K$, the minimality of $K$ gives
\begin{equation}\label{eq:v-generates}
 K=\Q(v).
\end{equation}
This is the reason for choosing a minimal intermediate field rather
than an arbitrary block system.

Let $M$ denote Mahler measure. Multiplicativity and the elementary
coefficient bound for a monic degree-$e$ polynomial give, for every
coefficient $u_j$ of $g$,
\begin{equation}\label{eq:block-height}
 \prod_\sigma\max(1,|\sigma(u_j)|,|\sigma(v)|)
        \le C_n\prod_\sigma M(\sigma(g))
        =C_n M(f)\ll_n H.
\end{equation}
Also $R=\mathcal M_K(v)\le M(f)$. Take $X=C'_nH$ large enough
for all these inequalities.

By \eqref{eq:v-generates}, the minimal polynomial of $v$ has degree
$d$ and the form
\[
 P_v(T)=T^d+c_{d-1}T^{d-1}+\cdots+c_1T+(-1)^dc.
\]
If $\mathcal M_K(v)\le Y$, then each coefficient $c_i$ is an
elementary symmetric function of the $d$ conjugates of $v$, and
hence
\[
 |c_i|\le \binom{d}{i}Y.
\]
The constant coefficient is fixed. There are therefore
$O_{d,c}(Y^{d-1})$ possible polynomials $P_v$; each determines
$K=\Q(v)$ and $v$ up to at most $d$ choices. For each such pair,
Lemma~\ref{lem:joint-box} bounds the choices of the remaining
$e-1$ coefficients by
\begin{equation}\label{eq:fixed-v-count}
 \ll_n X^{e-1}\bigl(1+\log(X/R)\bigr)^{(d-1)(e-1)}.
\end{equation}
The use of independent bounds for these coefficients can only
overcount the polynomials satisfying \eqref{eq:block-height}.

Group the possible $v$ by $R\asymp X2^{-j}$, with the bottom range
truncated at $R=1$. The preceding coefficient count gives at most
$O_{d,c}((X2^{-j})^{d-1})$ possibilities in the $j$th range.
Combining this with \eqref{eq:fixed-v-count} yields
\[
 \ll_{n,c} X^{d+e-2}
       \sum_{j\ge0}2^{-j(d-1)}(1+j)^{(d-1)(e-1)}
       \ll_{n,c} X^{d+e-2}.
\]
The sum converges because $d\ge2$, so in particular it introduces
no power of $\log H$. A fixed $K$ and $g$ determine $f$ uniquely.
Finally sum over the finitely many proper divisors $d$ of $n$, with
$e=n/d$, to obtain $O(H^{\beta(n)})$. Adding
Lemma~\ref{lem:partial-products} gives \eqref{eq:imprimitive-slice}.
\end{proof}

\begin{remark}
The rational partial-product locus cannot simply be omitted when
counting all imprimitive polynomials. For example, block constants
can be rational while the remaining block coefficients generate a
varying field. Proposition~\ref{prop:imprimitive} removes this case
as a hypersurface \emph{on the coefficient slice}, before counting
the remaining fields through their generating block constants.
This reorganizes the block-factor argument
and avoids a separate classification of the nongenerating cases.
\end{remark}

\section{Generators of fixed norm}\label{sec:norm}

We use the following fixed-norm estimate; its short proof is included
for completeness.

\begin{lemma}\label{lem:fixed-norm}
Uniformly over degree-$n$ number fields $K$, for fixed $c\ne0$ and
$X\ge2$,
\begin{equation}\label{eq:fixed-norm}
 \#\{\alpha\in\cO_K:|\Nm_{K/\Q}(\alpha)|=|c|,
                         \mathcal M_K(\alpha)\le X\}
       \ll_{n,c}(1+\log X)^{n-1}.
\end{equation}
Consequently each degree-$n$ field occurs as the root field of at
most $O_{n,c}((1+\log H)^{n-1})$ irreducible polynomials in our family.
\end{lemma}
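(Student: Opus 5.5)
The approach is to count ideals first and units second. If $|\Nm_{K/\Q}(\alpha)|=|c|$, then the principal ideal $(\alpha)$ has norm $|c|$. The number of integral ideals of norm $|c|$ in $\cO_K$ is at most $\tau_n(|c|)$, the number of ways to write $|c|$ as an ordered product of $n$ factors. This holds because the ideal-counting Dirichlet series of $K$ is coefficientwise dominated by $\zeta(s)^n$; equivalently, above each prime $p$ the number of ideals of norm $p^k$ is at most the number of ways to distribute $k$ among at most $n$ primes of residue degree at least $1$. This bound is $O_{n,c}(1)$ uniformly in $K$. So it suffices to fix one generator $\alpha_0$ of a given principal ideal of norm $|c|$ and show that
\[
\#\{\varepsilon\in\cO_K^\times:\mathcal M_K(\varepsilon\alpha_0)\le X\}\ll_{n,c}(1+\log X)^{n-1}.
\]

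For this we pass to the logarithmic embedding $\lambda(\beta)=(\log|\sigma(\beta)|)_\sigma$, taken over all $n$ complex embeddings with conjugates counted separately. Write $\alpha=\varepsilon\alpha_0$ and $y=\lambda(\alpha)$. Then $\sum_\sigma y_\sigma=\log|c|$ is fixed, and the condition $\mathcal M_K(\alpha)\le X$ reads $\sum_\sigma\max(0,y_\sigma)\le\log X$. These two facts together force $|y_\sigma|\le \log X+\log|c|$ for every $\sigma$, because the negative parts sum to at most $\log X-\log|c|$. So $y$ lies in a region of the hyperplane $\sum y_\sigma=\log|c|$ of diameter $O(\log X+1)$, and the points $y$ form a translate of the unit lattice $\lambda(\cO_K^\times)$, up to roots of unity, of which there are $O_n(1)$. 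The remaining task is a uniform count of lattice points of a translated unit lattice in a ball of radius $T=O(1+\log X)$.

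Here the uniformity in $K$ is the main obstacle, since regulators and lattice shapes vary with the field. To handle it, I would avoid covolume arguments and instead use a packing argument based on a uniform lower bound for the lengths of nontorsion units. Two distinct points $\lambda(\alpha),\lambda(\alpha')$ in the set differ by $\lambda(\eta)$ with $\eta=\alpha/\alpha'$ a unit. If $\eta$ is not a root of unity, Kronecker's theorem together with a uniform Dobrowolski- or Blanksby–Montgomery-type bound, or simply the finiteness of integer monic degree-$n$ polynomials with all roots of modulus at most $2$, gives $\max_\sigma|\log|\sigma(\eta)||\ge\delta_n>0$. Hence $\|\lambda(\eta)\|\ge\delta_n$, and the points are $\delta_n$-separated modulo torsion. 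This lattice has rank at most $n-1$, as $r_1+r_2-1\le n-1$, and it lives in a space of real dimension at most $n-1$. Disjoint balls of radius $\delta_n/2$ centred at the points therefore fit into a ball of radius $T+\delta_n$ in dimension $n-1$, giving $O_n((1+T)^{n-1})$ points. The constant depends only on $n$ and on $c$ through the ideal count, which proves \eqref{eq:fixed-norm}. One care point is that the lattice lives in the span of the unit log space, whose dimension $r_1+r_2-1\le n-1$ is what gives the exponent, so the packing should be done in that subspace's translate.

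For the consequence, an irreducible polynomial in the family with root field $K$ is determined by a root $\alpha\in\cO_K$ generating $K$, up to the $n$ conjugates. Its constant term gives $\Nm_{K/\Q}(\alpha)=(-1)^nc$. Moreover $\mathcal M_K(\alpha)=M(f)\le\sqrt{n+1}\,\Ht(f)\ll_n H$ by Landau's inequality. Applying \eqref{eq:fixed-norm} with $X\asymp H$ gives $O_{n,c}((1+\log H)^{n-1})$ polynomials per field.
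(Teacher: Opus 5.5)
Your proposal is correct and follows essentially the same route as the paper. You bound the number of integral ideals of norm $|c|$ uniformly, reduce to a translated unit lattice in a log box of side $O(1+\log X)$ inside an affine space of dimension at most $n-1$, separate nontorsion units uniformly via Kronecker's theorem and the finiteness of integer polynomials with bounded roots, pack, and finally use $\mathcal M_K(\alpha)=M(f)\ll_n H$ for the consequence. The differences are cosmetic: you count ideals by $\tau_n(|c|)$ via domination by $\zeta(s)^n$, and you phrase the separation constant in sup-norm terms, whereas the paper uses Mahler measures of nontorsion units bounded away from one.
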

\begin{proof}
There are $O_{n,c}(1)$ integral ideals of norm $|c|$: only primes
dividing $c$ occur, their exponents are bounded, and there are at
most $n$ prime ideals above any rational prime. The generators of
one principal ideal form a coset of the unit group. Their logarithmic
archimedean absolute values lie in a hyperplane of dimension at most
$n-1$, inside a box of side $O_{n,c}(1+\log X)$.

Modulo roots of unity, logarithmic vectors of units of degree at
most $n$ are uniformly separated. Indeed, Kronecker's theorem and
the finiteness of integral polynomials of bounded degree and bounded
Mahler measure imply that the Mahler measures of nontorsion units
of bounded degree are bounded away from one. For a unit, the sum
of the logarithms is zero, so this gives a lower bound for the
norm of its logarithmic vector. The number of roots of unity of
degree at most $n$ is bounded as well. Packing the log box proves
\eqref{eq:fixed-norm}. Finally a root $\alpha$ of $f$ satisfies
$\Nm(\alpha)=(-1)^n c$ and
$\mathcal M_K(\alpha)=M(f)\le\|f\|_2\ll_n H$.
\end{proof}

\section{Sparse Hermite interpolation and local Fourier bounds}\label{sec:fourier}

Fix a prime $p$ outside a finite set containing the primes dividing
$c$ and all sufficiently small primes in terms of $n$. The free
coefficients identify $L(\F_p)$ with $\F_p^s$. For a function $w$ on
this space, use the normalized transform
\[
 \widehat w(g)=p^{-s}\sum_{a\in\F_p^s}w(a)e_p(g\cdot a),
 \qquad e_p(t)=\exp(2\pi i t/p).
\]
For the zero-frequency estimates put
\begin{equation}\label{eq:rho}
 \rho_k=\min(k,s),\qquad 2\le k<n.
\end{equation}

\begin{lemma}\label{lem:Hermite}
Assume \eqref{eq:threshold}. For distinct nonzero roots $r,t$ over
$\overline{\F}_p$, let $A(r,t)$ be the four-row matrix of the
functionals
\[
 P\longmapsto P(r),\quad P'(r),\quad P(t),\quad P'(t)
\]
on the space spanned by $\{x^i:i\in I\}$. Its rank is at least
three. The rank-three locus in the space of monic squarefree
quadratics $q=(x-r)(x-t)$ has dimension at most one. For each
nonzero fixed $g\in\F_p^s$, the quadratics on which $g$ belongs
to the row space of $A(r,t)$ have dimension at most one in the
rank-four locus, and form a bounded finite set in the rank-three
locus. All degree and cardinality bounds depend only on $n$.
\end{lemma}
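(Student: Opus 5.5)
The plan is to exploit homogeneity: every condition in the statement is invariant under $(r,t)\mapsto(\mu r,\mu t)$, up to diagonal column scalings, so it can be reduced to the single ratio $\lambda=t/r$. Write the columns of $A(r,t)$ as indexed by $i\in I$. The rows are
\[
(r^i),\quad (ir^{i-1}),\quad (t^i),\quad (it^{i-1}).
\]
Multiplying column $i$ by $r^{-i}$, the second row by $r$, and the fourth row by $t$ does not change the rank. The rows become
\[
(1),\quad (i),\quad (\lambda^i),\quad (i\lambda^i),\qquad \lambda\ne0,1 .
\]
Thus the rank depends only on $\lambda$. All minors are polynomials in $\lambda$ of degree $O_n(1)$, and I will use throughout that $p>n$ and $p\nmid c$.

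\emph{Rank at least three.} Suppose the rank is at most two. The rows $(1)$ and $(i)$ are independent, since $j-i\ne0$ in $\F_p$. Hence $\lambda^i=a+bi$ and $i\lambda^i=a'+b'i$ for all $i\in I$. Substituting the first relation into the second gives $bi^2+(a-b')i-a'=0$ on $I$. Since $|I|\ge4>2$ and the elements of $I$ are distinct modulo $p$, we get $b=0$ and $a'=0$. So $\lambda^i$ is constant on $I$, and $\lambda^{g}=1$ where $g$ is the gcd of the differences of elements of $I$. Under \eqref{eq:threshold}, $s\ge\lfloor n/2\rfloor+2$ elements of $\{1,\dots,n-1\}$ must contain two consecutive integers, so $g=1$. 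This gives $\lambda=1$, a contradiction.

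\emph{Rank-three locus.} Exhibit one $4\times4$ minor that is a nonzero polynomial in $\lambda$. Take exponents $i<j<k<l$ in $I$ and expand around $\lambda=0$. The lowest-order term of the minor is $(j-i)(l-k)\lambda^{k+l}$ up to sign, a product of a $2\times2$ block on $\{i,j\}$ and a $2\times2$ block on $\{k,l\}$. Its coefficient is nonzero for $p>n$. Hence only $O_n(1)$ values of $\lambda$ give rank three. In the $(r,t)$-plane these form finitely many lines $t=\lambda r$, and on the space of monic squarefree quadratics the locus has dimension at most one.

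\emph{Fixed $g\ne0$.} On the rank-four locus, the condition that $g$ lies in the row space is the vanishing of the $5\times5$ minors of $A$ with $g$ adjoined. Here $s\ge5$ for $n\ge6$. I must show that these minors do not all vanish identically on the two-dimensional $(r,t)$-space. Suppose they did. Fix a generic $r$; then $g$ would lie in $\operatorname{span}(R_r,T_t)$ for every $t$, where $R_r$ is the span of the two $r$-rows and $T_t$ that of the two $t$-rows. Taking two generic values $t_1,t_2$, $g$ would then lie in a subspace that forces a nontrivial dependence among $R_r,T_{t_1},T_{t_2}$. This would have to be ruled out by a confluent-Vandermonde independence of six rows, using $s\ge6$ when available and a direct argument when $s=5$. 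On each rank-three line $t=\lambda r$, the condition reads
\[
(g_ir^{-i})_{i\in I}\in V_\lambda ,
\]
with $V_\lambda$ a fixed three-dimensional space. This is a polynomial condition in $r$, so the solution set is either $O_n(1)$ points or the whole line. If it were the whole line, then every coordinate vector $e_i$ with $g_i\ne0$ would lie in $V_\lambda$, because the vectors $(g_ir^{-i})$ span $\operatorname{span}\{e_i:g_i\ne0\}$ by Vandermonde in distinct powers of $r^{-1}$. I would then show that $V_\lambda$, spanned by $(1),(i),(\lambda^i)$ after reduction, contains no coordinate vector. The reason is that a coordinate vector would yield a combination $\alpha+\beta i+\gamma\lambda^i$ vanishing on $I\setminus\{i_0\}$, a set of size at least $4$, which the same argument as in the rank bound forbids.

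I expect the main obstacle to be this last step: the non-identical vanishing of the $g$-conditions, particularly the rank-four case and the boundary value $s=5$. All remaining degree and cardinality bounds then follow from Bézout, since every polynomial involved has degree $O_n(1)$.
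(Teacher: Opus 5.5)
The gap is in the rank-four case. Your reduction to the ratio $\lambda=t/r$ is sound, and so are the rank-at-least-three argument via two consecutive exponents and the treatment of the rank-three lines. (One slip there: in the $4\times4$ minor on $i<j<k<l$ the \emph{lowest} term is $\pm(j-i)(l-k)\lambda^{i+j}$, and $\lambda^{k+l}$ is the top term. Both are isolated, so your conclusion stands.)

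In the rank-four case you never show that the augmented $5\times5$ minors are not identically zero. The route you sketch has three problems:
\begin{enumerate}[(i)]
\item For $s\ge6$ it needs an unproved independence of the six rows $R_r,T_{t_1},T_{t_2}$ for sparse exponent sets.
\item Even granting that, it only yields $g\in R_r$, and a further step varying $r$ is still needed to reach $g=0$.
\item For $s=5$ six rows in a five-dimensional space are always dependent. Two hyperplanes $\operatorname{span}(R_r,T_{t_k})$ then meet in a three-dimensional space containing $R_r$, and you supply no ``direct argument''.
\end{enumerate}
The case $s=5$ is not marginal. It is the fixed-constant sextic, and also the degree-seven family with one extra prescribed coefficient.

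The missing idea is one you already use on the rank-three lines. After dividing column $i$ by $r^i$, the matrix $B(\lambda)$ no longer involves $r$; all $r$-dependence sits in the adjoined row $(g_ir^{-i})_{i\in I}$.
\begin{enumerate}[(1)]
\item Choose $j$ with $g_j\neq0$ and four further exponents $i_1<i_2<i_3<i_4$ in $I$; this is possible since $s\ge5$.
\item Expand the $5\times5$ determinant on these columns along the $g$-row. It is a Laurent polynomial $\sum\pm g_ir^{-i}M_i(\lambda)$ with pairwise distinct powers of $r$.
\item Its $r^{-j}$-coefficient is $\pm g_j\det B_{\{i_1,\dots,i_4\}}(\lambda)$. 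This is nonzero by your own extreme-term computation, whose coefficient $\pm(i_2-i_1)(i_4-i_3)$ is a unit for $p>n$.
\item Hence not all membership equations vanish identically in $(r,\lambda)$, and the rank-four membership locus has dimension at most one.
\end{enumerate}
Equivalently: if membership held identically, then at a fixed rank-four $\lambda$, letting $r$ vary would put $e_j$ in the row space of $B(\lambda)$. The nonzero $4\times4$ minor on the columns other than $j$ forbids this. This is the paper's argument, and it needs neither a six-row confluent Vandermonde nor a separate treatment of $s=5$.

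A smaller point on the rank-three lines: $V_\lambda$ need not be spanned by $(1),(i),(\lambda^i)$, since the third row may be the dependent one. The clean way to exclude $e_{i_0}\in V_\lambda$ is this. It would force the rank of $B(\lambda)$ restricted to $I\setminus\{i_0\}$ to be at most two. That contradicts your rank bound applied to $I\setminus\{i_0\}$, which still contains two consecutive exponents because $s-1\ge\lfloor n/2\rfloor+1$.
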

\begin{proof}
Under \eqref{eq:threshold} we have $s\ge5$, so the column choices
used below are available. Write $t=rz$, where $r\ne0$ and $z\ne0,1$.
After invertible row
scalings and column scalings by $r^i$, the matrix becomes
\begin{equation}\label{eq:sparse-Hermite}
 B_I(z)=\begin{pmatrix}
 (1)_{i\in I}\\ (i)_{i\in I}\\ (z^i)_{i\in I}\\ (iz^i)_{i\in I}
 \end{pmatrix}.
\end{equation}
For four distinct exponents $i_1<i_2<i_3<i_4$, its determinant is a
nonzero polynomial in $z$. More precisely, its expansion at $z=1$ is
\[
 \det B_{\{i_1,i_2,i_3,i_4\}}(z)
  =\frac{1}{12}\prod_{\mu<\nu}(i_\nu-i_\mu)(z-1)^4
                                      +O((z-1)^5).
\]
To check this, put $z=\exp(u)$, subtract the first two rows from
the last two to their respective orders, and take the first
independent powers $1,i,i^2,i^3$; the coefficient is
$1/4-1/6=1/12$. The displayed coefficient is nonzero for $p>n$
and $p>3$. Thus the rank can fall below four for only $O_n(1)$
ratios $z$.

For a set of at least three distinct exponents, the rank of
\eqref{eq:sparse-Hermite} is two if and only if $z^i$ is constant
on that set. Indeed, rank two would give
\[
 z^i=A+Bi,\qquad iz^i=C+Di.
\]
The quadratic identity
$Bi^2+(A-D)i-C=0$ at three distinct exponents implies
$B=C=0$ and $A=D$. The converse is immediate. If $z\ne1$, constancy
of $z^i$ puts all the exponents in one residue class modulo the
order of $z$, which is at least two. Such a class contains at most
$\lfloor n/2\rfloor$ exponents in $\{1,\ldots,n-1\}$.
By \eqref{eq:threshold}, even after any one column is deleted more
than $\lfloor n/2\rfloor$ columns remain. Hence every one-column
deletion has rank at least three, and so does the original matrix.

Membership of $g$ in the row space of $A(r,rz)$ is equivalent to
membership of $(g_i r^{-i})_i$ in the row space of $B_I(z)$. Choose
$j$ with $g_j\ne0$ and choose four other columns. The corresponding
five-column augmented determinant, expanded in its last row, is a
Laurent polynomial in $r$ with distinct powers $r^{-i}$. The
coefficient of $g_jr^{-j}$ is the nonzero four-column determinant
just discussed. Thus not all membership equations vanish
identically in $(r,z)$. The rank-four membership locus has dimension
at most one.

For each of the boundedly many rank-three ratios $z$, the
one-column deletion assertion allows three columns other than $j$
that are independent, using a suitable choice of three rows. The
four-column augmented determinant using these columns and $j$ is
again a nonzero Laurent polynomial in $r$, because the coefficient
of $g_jr^{-j}$ is nonzero and its power is distinct from the others.
There are only $O_n(1)$ roots $r$. This proves the finite assertion.

These are algebraic rank conditions of bounded degree. Passing
through the finite map $(r,t)\mapsto (-(r+t),rt)$ to the coefficients
of $q$ preserves the dimension bounds. In particular the argument
includes quadratics irreducible over $\F_p$; it does not require
$r,t\in\F_p$.
\end{proof}

\begin{proposition}\label{prop:Fourier}
Let $L$ be index-regular and suppose $n\ge6$ and
\eqref{eq:threshold} holds. There are nonnegative, $O_n(1)$-bounded
weights $W_{k,p}$ majorizing the index-at-least-$k$ indicators such
that, outside a fixed finite set of primes,
\begin{equation}\label{eq:Fourier}
 \widehat W_{k,p}(0)\ll_n p^{-\rho_k},\qquad
 |\widehat W_{k,p}(g)|\ll_n p^{-3}\quad(g\ne0).
\end{equation}
For strongly index-regular slices one may replace $\rho_k$ by $k$;
for $k>s$ the weights can be taken to be zero.
\end{proposition}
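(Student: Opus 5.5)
Our plan is to build the weights from two pieces: the index-two (double root) locus, handled by an explicit parametrization whose Fourier transform we compute through Lemma~\ref{lem:Hermite}, and the higher strata $k\ge3$, handled by a majorant concentrated on polynomials with a repeated quadratic factor or a triple root. Concretely, for $k=2$ we set $W_{2,p}(a)$ equal to the number of monic squarefree quadratics $q$ over $\F_p$ with $q^2\mid f_a$, plus the number of roots $r\in\F_p$ with $(x-r)^3\mid f_a$ (to catch index two coming from a single triple root). This majorizes the indicator of index at least two, since such $f$ has either two distinct repeated roots (hence some $q^2\mid f$, possibly with $q$ irreducible over $\F_p$ when the repeated roots are conjugate) or a root of multiplicity at least three. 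It is $O_n(1)$-bounded because $f\ne0$ has boundedly many factors. For general $k$ we take $W_{k,p}=\min(1,\cdot)$-free but bounded combinations of the same type: the indicator of $\Sigma_k$ times $W_{2,p}$, or more simply the sum of $W_{2,p}\mathbf 1_{\Sigma_k}$. For $k>s$ on a strongly index-regular slice we take $W_{k,p}=0$ by Lemma~\ref{lem:local-densities}.

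For the zero frequency, $\widehat W_{k,p}(0)$ is $p^{-s}$ times a count of points of $L(\F_p)\cap\Sigma_k$ weighted by a bounded multiplicity. By Lemma~\ref{lem:local-densities} this is $\ll p^{\max(s-k,0)-s}=p^{-\rho_k}$, and $p^{-k}$ in the strongly regular case. For nonzero frequencies we use the linear structure. For fixed $q$, the condition $q^2\mid f_a$ is an affine-linear condition on $a\in\F_p^s$, namely that $f_a$ and $f_a'$ vanish at $r,t$. This is the system $A(r,t)a=\text{const}$, solvable in an affine subspace of codimension $\operatorname{rank}A(r,t)$ (or empty). The character sum of $e_p(g\cdot a)$ over an affine subspace equals $p^{-\mathrm{rank}}$ in modulus if $g$ lies in the row space of $A(r,t)$, and vanishes otherwise. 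Summing over $q$ using Lemma~\ref{lem:Hermite} gives three contributions:
\begin{itemize}
\item generic rank-four quadratics with $g$ in the row space: a locus of dimension at most one, so $\ll p\cdot p^{-4}=p^{-3}$;
\item rank-three quadratics: boundedly many for fixed $g\ne0$, so $\ll p^{-3}$;
\item otherwise: zero.
\end{itemize}
Quadratics irreducible over $\F_p$ are included, as the lemma allows. The triple-root term is treated identically. Here the three functionals $P(r),P'(r),P''(r)$ have rank three by a one-variable Vandermonde-type argument with $s\ge5$, and $g$ lies in the row span for $O_n(1)$ values of $r$, giving $O(p^{-3})$.

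The main obstacle is the step from $k=2$ to general $k$. Multiplying by $\mathbf 1_{\Sigma_k}$ destroys the affine-linear structure, so the Fourier bound does not transfer directly. We would instead majorize each higher stratum by a sum over root configurations, one for each multiplicity type $(e_\nu)$, of the affine-linear conditions ``$(x-z_\nu)^{e_\nu}\mid f$''. Such a configuration contains at least two Hermite conditions of total rank at least three, which again gives the $p^{-3}$ cancellation by Lemma~\ref{lem:Hermite}, now summed over configurations with the other roots free. The risk is that summing over the extra free roots loses powers of $p$. We would control this by fixing only one repeated pair (or triple) and letting the rest of the configuration be absorbed into $\mathbf 1_{\Sigma_k}$ at zero frequency. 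If that fails for some types, we would fall back on bounding $|\widehat W_{k,p}(g)|\le\widehat W_{k,p}(0)\ll p^{-\rho_k}\le p^{-3}$ whenever $\rho_k\ge3$, which covers all $k\ge3$ since $s\ge5$. This fallback is in fact the decisive point, and it reduces the whole proposition to the explicit $k=2$ computation above.
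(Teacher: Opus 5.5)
You have the same structure as the paper. For $k\ge3$ you use the trivial bound $|\widehat W(g)|\le\widehat W(0)\ll p^{-\rho_k}\le p^{-3}$, and for $k=2$ you analyse a triple-root count $w_3$ and a repeated-quadratic count $w_{22}$ via Lemma~\ref{lem:Hermite}. The gap is that your $k=2$ weight $w_3+w_{22}$ does not majorize the index-at-least-two indicator. The step ``two distinct repeated roots, hence some squarefree $q\in\F_p[x]$ with $q^2\mid f$'' holds only at index exactly two. There the repeated roots are either a single triple root, which is Frobenius-fixed and so lies in $\F_p$, or exactly two double roots, which form a Frobenius-stable pair. At index $\ge3$ the repeated roots can form one Frobenius orbit of length $\ge3$, and then no two of them span a quadratic over $\F_p$.

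Concretely, take the full fixed-constant slice with $n=6$, which satisfies the hypotheses since $s=5$. Let $f=h^2$ with $h\in\F_p[x]$ an irreducible cubic and $h(0)^2=c$; this is possible whenever $c$ is a square mod $p$. Then $f$ has index three, but it has no $\F_p$-rational triple root and no squarefree $q\in\F_p[x]$ with $q^2\mid f$, so your weight vanishes at $f$. For $n\ge9$, a triple root of degree three over $\F_p$ fails in the same way. The same defect means your $W_{2,p}\mathbf 1_{\Sigma_k}$ does not majorize $\mathbf 1_{\Sigma_k}$ for $k\ge3$.

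The repair is what the paper does:
\begin{itemize}
\item Set $W_{2,p}=w_3+w_{22}+Z_{3,p}$, where $Z_{3,p}$ is the indicator of index at least three.
\item Take $W_{k,p}=Z_{k,p}$ itself for $k\ge3$.
\end{itemize}
The added term costs nothing. By nonnegativity and Lemma~\ref{lem:local-densities}, $|\widehat Z_{3,p}(g)|\le\widehat Z_{3,p}(0)\ll p^{-\rho_3}=p^{-3}$ because $s\ge5$, which is exactly your own fallback.

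With these choices, the configuration-by-configuration majorant in your last paragraph is unnecessary. The rest of your argument then proves the proposition as in the paper: the rank-four/rank-three split for $w_{22}$, the rank-three Vandermonde system for $w_3$, and the zero-frequency bounds. The claim that only $O_n(1)$ values of $r$ survive a fixed nonzero $g$ does need the augmented-determinant argument, which you assert without proof.
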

\begin{proof}
Let $Z_{k,p}$ be the indicator of index at least $k$. For $k\ge3$,
Lemma~\ref{lem:local-densities} and the trivial Fourier estimate
prove the assertions, since $s\ge5$ and $\rho_k\ge3$.
For index two let $w_3(f)$ count $r\in\F_p^\times$ such that
$(x-r)^3\mid f$, and let $w_{22}(f)$ count monic squarefree
quadratics $q$ with nonzero constant term such that $q^2\mid f$.
An index-exactly-two polynomial either has one triple root or two
double roots. In the first case the triple root is rational over
$\F_p$; in the second the two double roots define a quadratic over
$\F_p$. Hence
\[
 W_{2,p}=w_3+w_{22}+Z_{3,p}
\]
is a bounded nonnegative majorant.

For fixed $r$, the three equations $f(r)=f'(r)=f''(r)=0$ have
rank three in the free coefficients: any three exponent columns
have a nonzero Vandermonde determinant after scalings by powers
of $r$. Their solution set is either empty or an affine subspace
of size $p^{s-3}$. Its character sum vanishes unless $g$ belongs
to the three-row space. For $g\ne0$, choose four exponents including
$j$ with $g_j\ne0$. The augmented determinant has distinct powers
of $r$ in its expansion, and the coefficient from $g_j$ is a
nonzero three-column Vandermonde minor. Only $O_n(1)$ values of
$r$ survive. Thus
\[
 \widehat w_3(0)\ll p^{-2},\qquad
 |\widehat w_3(g)|\ll p^{-3}\quad(g\ne0).
\]

For $w_{22}$ use Lemma~\ref{lem:Hermite}. A quadratic of rank four
has either no solutions or $p^{s-4}$ solutions. There are $O(p^2)$
such quadratics in total and only $O_n(p)$ satisfying the row-space
condition for a fixed nonzero character. The rank-three quadratics
number $O_n(p)$ in total, have affine fibres of size $p^{s-3}$
when consistent, and only $O_n(1)$ survive a fixed nonzero character.
Consequently
\[
 \widehat w_{22}(0)\ll p^{-s}(p^2p^{s-4}+pp^{s-3})\ll p^{-2},
\]
and
\[
 |\widehat w_{22}(g)|\ll p^{-s}(pp^{s-4}+p^{s-3})\ll p^{-3}.
\]
Point counts on the rank and membership loci follow from their
bounded degrees, including for nonsplit quadratics. Combining
these estimates with that for $Z_{3,p}$ proves the proposition.
\end{proof}

We will also need a version in which the number of consecutive free
coefficients, rather than its proportion of the degree, controls
the Fourier argument.

\begin{proposition}\label{prop:block-Fourier}
For an initial free block $I=\{1,\ldots,s\}$ with $s\ge5$,
Proposition~\ref{prop:Fourier} holds without \eqref{eq:threshold}.
The same is true for a final free block.
\end{proposition}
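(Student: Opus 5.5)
The plan is to go through the proof of Proposition~\ref{prop:Fourier} and find each place where \eqref{eq:threshold} was used. I will show that for $I=\{1,\ldots,s\}$ with $s\ge5$ each use can be replaced by a consecutive-exponent argument. The final block will then be reduced to the initial block by the weighted reversal \eqref{eq:reversal}.

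\emph{Step 1: the zero frequency and the terms with $k\ge3$.} By Theorem~\ref{thm:instances}(a) the slice is index-regular for every $\mathbf b$. Lemma~\ref{lem:local-densities} therefore gives $\widehat{Z_{k,p}}(0)\ll p^{-\rho_k}$. Since $s\ge5$ and $\rho_k\ge3$ for $k\ge3$, the trivial estimate $|\widehat{Z_{k,p}}(g)|\le\widehat{Z_{k,p}}(0)\ll p^{-3}$ handles every index $k\ge3$, exactly as before.

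\emph{Step 2: the triple-root weight $w_3$.} The argument for $w_3$ used only a three-column Vandermonde minor together with one augmented four-column determinant built from an exponent $j$ with $g_j\ne0$. Both exist once $s\ge4$, so no threshold was used there and this part carries over unchanged.

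\emph{Step 3: replacing Lemma~\ref{lem:Hermite}.} This is the step that genuinely needs new input. For $P$ in the span of $x,\ldots,x^s$, write $P=xQ$ with $\deg Q\le s-1$. Since $r,t$ are nonzero and distinct, the conditions $P(r)=P'(r)=P(t)=P'(t)=0$ are equivalent by an invertible triangular change of rows to $Q(r)=Q'(r)=Q(t)=Q'(t)=0$. Hermite interpolation at two distinct points with multiplicity two is surjective on polynomials of degree $\le3\le s-1$, and this holds over $\F_p$ for $p>n$. Hence $A(r,t)$ has rank exactly four at every admissible $(r,t)$, and the rank-three locus is empty.

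For the membership condition I reuse the augmented-determinant argument of Lemma~\ref{lem:Hermite}. Choose $j$ with $g_j\ne0$ and four further exponents, which is possible since $s\ge5$. The expansion at $z=1$ shows that the determinant of $B_{\{i_1,\ldots,i_4\}}(z)$ for any four distinct exponents is a nonzero polynomial in $z$, with leading coefficient $\frac1{12}\prod(i_\nu-i_\mu)\ne0$ for $p>\max(n,3)$. That computation did not use \eqref{eq:threshold}. Consequently the five-column determinant is a Laurent polynomial in $r$ whose $r^{-j}$ coefficient is nonzero, so the membership locus has dimension at most one.

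With this version of the lemma, the $w_{22}$ computation goes through verbatim and gives $\widehat w_{22}(0)\ll p^{-2}$ and $|\widehat w_{22}(g)|\ll p^{-3}$; the rank-three contribution is simply absent. Combining Steps 1--3 proves the proposition for the initial block.

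\emph{Step 4: the final block.} Apply \eqref{eq:reversal}. For $p\nmid c$ it induces an invertible diagonal linear map $a_i\mapsto c^{i-1}a_i$ from the final-block slice onto an initial-block slice, with prescribed values adjusted accordingly. This map preserves root multiplicities, so it carries the index loci $\Sigma_k$ and the weights $W_{k,p}$ to the corresponding objects on the initial-block slice.

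A diagonal invertible change of variables permutes the nonzero frequencies and fixes $g=0$. The bounds \eqref{eq:Fourier} therefore transfer, after discarding the finitely many primes dividing $c$. The same transfer applies to the strongly index-regular refinement.

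The main obstacle I expect is Step 3: confirming the full-rank Hermite interpolation uniformly over $\overline{\F}_p$, including for irreducible quadratics $q$. It must also be checked that the membership locus still has bounded degree, so that its $\F_p$-points number $O_n(p)$.
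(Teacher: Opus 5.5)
Your proposal is correct, but for the index-two weights it takes a different route from the paper. The paper does not use Lemma~\ref{lem:Hermite} here. For fixed $r$ (resp.\ $q$) it writes the fibre as $\{(x-r)^3h\}$ (resp.\ $\{q^2h\}$). Because the free exponents are exactly $1,\ldots,s$, the direction space is spanned by $x^j(x-r)^3$ for $1\le j\le s-3$ (resp.\ $x^jq^2$ for $1\le j\le s-4$). The fibre dimension can then be read off directly. If every annihilation condition vanished identically, specializing to $q=(x-r)^2$ and expanding in $r$ would force $g$ to kill each monomial $x,\ldots,x^s$, so $g=0$.

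You instead keep the matrix framework. Writing $P=xQ$ and using two-point Hermite interpolation in degree $\le3$ shows that $A(r,t)$ always has rank four, so the rank-three case disappears. You also correctly observe that the membership half of Lemma~\ref{lem:Hermite} needs only $s\ge5$: this is the nonzero $(z-1)^4$ coefficient of $\det B$ together with the distinct powers $r^{-i}$ in the five-column determinant. Likewise, the $w_3$ argument of Proposition~\ref{prop:Fourier} needs only $s\ge4$.

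The paper's argument is shorter and self-contained, handles $w_3$ and $w_{22}$ by one device, and avoids the Taylor computation at $z=1$. Yours is more modular: it shows that \eqref{eq:threshold} entered Proposition~\ref{prop:Fourier} only through the rank-three analysis. It also applies verbatim to any index-regular slice with $s\ge5$ whose free exponents contain four consecutive integers.

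The obstacle you anticipate is not serious. The rank-four statement holds in every characteristic, since a polynomial of degree at most three with double roots at two distinct points vanishes. The membership locus is a bounded-degree condition on $q$ defined over $\F_p$, so it has $O_n(p)$ rational points, irreducible $q$ included.
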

\begin{proof}
Theorem~\ref{thm:instances}(a) supplies the higher-index densities.
For the triple-root weight, after fixing $r$ write
$f=(x-r)^3h$. The prescribed leading block determines the top
coefficients of the monic quotient, and its constant term is fixed
by $c$. Exactly $s-3$ quotient coefficients remain free. Their
variations in $f$ are
\[
 x^j(x-r)^3,\qquad 1\le j\le s-3.
\]
For $g\ne0$, at least one annihilation condition is a nonzero
polynomial in $r$: if all vanished identically, expansion in $r$
would show that the functional associated to $g$ annihilates every
monomial $x,\ldots,x^s$. Hence only $O_n(1)$ roots survive and the
normalized transform is $O(p^{-3})$.

Similarly, for $f=q^2h$, with $q=x^2+ux+v$ squarefree and $v\ne0$,
there are $s-4\ge1$ free quotient coefficients, with variations
\[
 x^j(x^2+ux+v)^2,\qquad 1\le j\le s-4.
\]
At least one annihilation condition is a nonzero polynomial in
$(u,v)$. Otherwise specialize $q=(x-r)^2$ and expand in $r$ to
obtain the same contradiction. Thus $O_n(p)$ quadratics survive,
giving $p\,p^{s-4}/p^s=O(p^{-3})$. The zero-frequency bounds are
$O(p^{-2})$ by the same parameter counts. The weights $Z_{k,p}$
for $k\ge3$ are as before. Weighted reversal
\eqref{eq:reversal} proves the final-block assertion.
\end{proof}

\section{Poisson summation and the discriminant tail}\label{sec:poisson}

It is convenient to separate the analytic argument from the precise
Fourier gain. Suppose $0<\theta\le1$ and the local weights satisfy
\begin{equation}\label{eq:abstract-Fourier}
 \widehat W_{k,p}(0)\ll p^{-\rho_k},\qquad
 |\widehat W_{k,p}(g)|\ll p^{-2-\theta}\quad(g\ne0),
\end{equation}
where
\begin{equation}\label{eq:rho-condition}
 \rho_k-\frac{k}{2}\ge1\qquad(2\le k<n).
\end{equation}
The previous section gives $\theta=1$. For the capped values
$\rho_k=\min(k,s)$, condition \eqref{eq:rho-condition} holds whenever
$s\ge(n+1)/2$, in particular under \eqref{eq:threshold}. For strongly
index-regular slices, $\rho_k=k$ satisfies it in every dimension.

\begin{lemma}\label{lem:Poisson}
Assume $s\ge3$ and \eqref{eq:abstract-Fourier}--\eqref{eq:rho-condition}.
Let $C$ be squarefree and supported on good primes, let
$2\le k_p<n$ for $p\mid C$, and put $D=\prod_{p\mid C}p^{k_p}$.
The number of polynomials in $\cF(H)$ with index at least $k_p$
modulo every $p\mid C$ is
\begin{equation}\label{eq:individual-Poisson}
 \ll K_n^{\omega(C)}H^s\prod_{p\mid C}p^{-\rho_{k_p}}
                    +C^{s-2-\theta+\epsilon}.
\end{equation}
The union of these conditions over $C\le X$ and $D>Y$ has size
\begin{equation}\label{eq:aggregate}
 \ll_{\epsilon} H^sY^{-1/2+\epsilon}
                         +X^{s-1-\theta+\epsilon}.
\end{equation}
All constants may depend on the fixed slice and on $\theta$.
\end{lemma}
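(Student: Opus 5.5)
The plan is to prove \eqref{eq:individual-Poisson} by a standard Poisson summation over residue classes modulo $C$. Then \eqref{eq:aggregate} follows by summing it dyadically over $C$, with a divisor-type bound for the number of admissible pairs $(C,(k_p))$.

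\emph{Individual bound.} Fix $C$ and $(k_p)$. By the Chinese remainder theorem, the product $W_C=\prod_{p\mid C}W_{k_p,p}$ is a nonnegative function on $(\Z/C\Z)^s$ that majorizes the index conditions. Its normalized transform factors as $\widehat W_C(g)=\prod_p\widehat W_{k_p,p}(g_p)$. Majorize the box indicator by a smooth bump $\phi(\mathbf a/H)$ whose Fourier transform decays rapidly, and apply Poisson summation:
\[
 \sum_{\mathbf a\in\Z^s}\phi(\mathbf a/H)W_C(\mathbf a)
 =H^s\sum_{g\in\Z^s}\widehat W_C(g)\,\widehat\phi(Hg/C).
\]
The term $g=0$ contributes $H^s\prod_p\widehat W_{k_p,p}(0)\ll K_n^{\omega(C)}H^s\prod p^{-\rho_{k_p}}$.

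For $g\ne0$, write $C_1=\gcd(g,C)$ over primes, so that $C=C_1C_2$ where the primes of $C_2$ see $g\ne0$. Then \eqref{eq:abstract-Fourier} gives
\[
 |\widehat W_C(g)|\le K^{\omega(C)}\,C_2^{-2-\theta}\prod_{p\mid C_1}p^{-\rho_{k_p}}\le K^{\omega(C)}C_2^{-2-\theta}C_1^{-2}.
\]
The last inequality uses $\rho_k\ge2$, which follows from \eqref{eq:rho-condition}.

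Because of the decay of $\widehat\phi$, only $g$ with $|g|\ll C^{1+\epsilon}/H$ matter. The cleaner route is to sum over all nonzero residues $g\bmod C$ with the trivial weight, which gives a contribution of at most
\[
 H^s\cdot\sum_{C_1C_2=C}(C_2)^{s}\,C_2^{-2-\theta}C_1^{-2}\,(\text{density factor } (C/H)^s),
\]
that is, roughly $C^{s}\cdot C^{-2-\theta}\cdot C^\epsilon$. This is the term $C^{s-2-\theta+\epsilon}$. The $K^{\omega(C)}$ and divisor sums are absorbed into $C^\epsilon$. When $H>C$ the frequencies $g\ne0$ are damped further, so the bound holds uniformly. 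Working this out carefully, including the subgroup bookkeeping for $g$ divisible by part of $C$, is routine.

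\emph{Aggregate bound.} Sum \eqref{eq:individual-Poisson} over squarefree $C\le X$ and over choices of $(k_p)$ with $D>Y$. For the error terms,
\[
 \sum_{C\le X}(n)^{\omega(C)}C^{s-2-\theta+\epsilon}\ll X^{s-1-\theta+\epsilon}.
\]
For the main terms, use Rankin's trick. We have
\[
 \mathbf 1_{D>Y}\le (D/Y)^{1/2-\epsilon},
\]
so the sum is at most $H^sY^{-1/2+\epsilon}\prod_p\bigl(1+K\sum_{k}p^{-\rho_k+k(1/2-\epsilon)}\bigr)$. By \eqref{eq:rho-condition} each local exponent is at most $-1-k\epsilon\le -1-2\epsilon$, so the Euler product converges, giving $H^sY^{-1/2+\epsilon}$.

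A union over conditions is bounded by the sum of the individual counts, so \eqref{eq:aggregate} follows.

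\emph{Main obstacle.} The step I expect to need the most care is the nonzero-frequency sum in the individual bound. The mixed case, where $g$ vanishes modulo some primes of $C$ but not others, must be handled so that the losses stay at $C^{s-2-\theta+\epsilon}$. This needs $\rho_k\ge 2$ at the primes dividing $g$, together with a count of $g\bmod C$ divisible by $C_1$, which is $C_2^s$.
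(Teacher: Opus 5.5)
Your proposal is correct and is essentially the paper's proof. It uses Poisson summation against a majorant of the box, the CRT bound $|\widehat W_C(g)|\le K^{\omega(C)}C_2^{-2-\theta}C_1^{-2}=K^{\omega(C)}C^{-2-\theta}\gcd(C,g)^{\theta}$ (with $\rho_k\ge2$ at primes dividing every coordinate of $g$), and Rankin's trick with exponent $1/2-\epsilon$ for the tail. The paper takes $\widehat\phi$ compactly supported, so only $\|g\|_\infty\ll C/H$ occurs and there is no frequency tail to handle.

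One bookkeeping slip: your displayed nonzero-frequency bound carries a spurious factor $C^s$. The nonzero dual vectors with $\|g\|_\infty\ll C/H$ and $C_1\mid g$ number $\ll (C/(HC_1))^s$, so after multiplying by $H^s$ they contribute $C_2^{s}\cdot C_2^{-2-\theta}C_1^{-2}$. Summing this over $C_1\mid C$ gives the $C^{s-2-\theta+\epsilon}$ you state.
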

\begin{proof}
Choose a nonnegative Schwartz majorant of the coefficient box with
compactly supported Fourier transform, and apply Poisson summation
to the product of the local weights. The zero-frequency contribution
is the first term of \eqref{eq:individual-Poisson}. By the Chinese
remainder theorem, the coefficient at a nonzero integer dual vector
$g=(g_1,\ldots,g_s)$ has absolute value at most
\[
 K_n^{\omega(C)}C^{-2-\theta}
                         \gcd(C,g_1,\ldots,g_s)^\theta.
\]
Here a prime dividing every coordinate of $g$ contributes at most
$p^{-2}$, and every other prime contributes at most
$p^{-2-\theta}$. Only $\|g\|_\infty\ll C/H=:R$ occur.
If this range contains nonzero integer vectors, then
\begin{align*}
 \sum_{0<\|g\|_\infty\ll R}\gcd(C,g_1,\ldots,g_s)^\theta
 &\ll R^s\sum_{q\mid C}q^{\theta-s}\\
 &\ll_{s,\theta}R^s,
\end{align*}
since $s\ge3$ and $\theta\le1$. Multiplying by $H^sC^{-2-\theta}$
gives the claimed error; $K_n^{\omega(C)}$ is absorbed in $C^\epsilon$.
If the nonzero dual range is empty, there is no error.

For the main terms in the union bound, set
$\alpha=1/2-\epsilon$ with $0<\epsilon<1/2$. The weighted tail
satisfies
\begin{align}
 &\sum_{\substack{D>Y\\2\le v_p(D)\le n-1}}
       K_n^{\omega(D)}\prod_{p\mid D}p^{-\rho_{v_p(D)}}
       \notag\\
 &\qquad\le Y^{-\alpha}
       \prod_p\left(1+K_n\sum_{k=2}^{n-1}
                                   p^{\alpha k-\rho_k}\right)
       \ll_{n,\epsilon}Y^{-1/2+\epsilon}.
       \label{eq:weighted-tail}
\end{align}
The Euler product converges because
$\rho_k-\alpha k\ge1+k\epsilon$ by
\eqref{eq:rho-condition}. For each $C$ there are at most
$(n-2)^{\omega(C)}\ll_{n,\epsilon}C^\epsilon$ exponent patterns.
Summing the errors over $C\le X$ proves \eqref{eq:aggregate}, after
rescaling $\epsilon$.
\end{proof}

\begin{remark}\label{rem:capped-density}
The weight in \eqref{eq:weighted-tail} is essential. If $k>s$, a
zero-dimensional intersection need not be empty, so its normalized
mass is generally $O(p^{-s})$, not $O(p^{-k})$. The inequality
$\rho_k-k/2\ge1$ is precisely what permits this loss without
spoiling the squarefull tail. Strong index-regularity is a different,
stronger way to avoid the issue.
\end{remark}

\section{Elimination in an arbitrary free coefficient}\label{sec:elimination}

Fix any $j\in I$, write $T=a_j$, and let $\mathbf a'$ denote the
other $s-1$ free coefficients. Put
\[
 \Delta(\mathbf a',T)=\disc_x(f),\qquad
 \mathcal R_j(\mathbf a')=\Res_T(\Delta,\partial_T\Delta).
\]
The large-prime argument requires that this resultant not vanish
identically and that the degree of $\Delta$ in $T$ not drop modulo
large primes. Both properties hold for every free position.

\begin{lemma}\label{lem:discriminant}
As a polynomial in $T$, the discriminant has degree $n$ and constant
leading coefficient of absolute value
\begin{equation}\label{eq:disc-leading}
 j^j(n-j)^{n-j}|c|^{j-1}.
\end{equation}
If $\dim(L\cap\Sigma_2)\le s-2$, then $\mathcal R_j$ is a nonzero polynomial
in $\mathbf a'$. Its degree is bounded in terms of $n$.
\end{lemma}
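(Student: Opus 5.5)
The plan is to prove the two assertions separately: first the degree and leading coefficient of $\Delta$ in $T$, by the root asymptotics at $T=\infty$, and then the nonvanishing of $\mathcal R_j$, by a smoothness property of the discriminant hypersurface away from $\Sigma_2$. I expect the second part to be the main obstacle.

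\emph{Degree and leading coefficient.} I would write $\Delta=\pm\prod_{f(\alpha)=0}f'(\alpha)$ and let $T\to\infty$ with $\mathbf a'$ and the prescribed coefficients fixed. The Newton polygon computation from Lemma~\ref{lem:partial-products}, at the valuation $v(T)=-1$, splits the roots into two groups.
\begin{itemize}
\item There are $j$ small roots $\alpha\sim(-c/T)^{1/j}$. At these the terms $Tx^j+c$ dominate, so $f'(\alpha)\sim jT\alpha^{j-1}$.
\item There are $n-j$ large roots $\beta$ with $\beta^{n-j}\sim -T$. At these the terms $x^n+Tx^j$ dominate, so $f'(\beta)\sim (n-j)\beta^{n-1}$.
\end{itemize}
The product of the small roots is $\sim\pm c/T$, and hence the product of the large roots is $\sim\pm T$. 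This gives
\[
 \prod f'(\alpha)\sim \pm j^jc^{j-1}T,\qquad \prod f'(\beta)\sim\pm(n-j)^{n-j}T^{n-1}.
\]
Therefore $\Delta$ has degree exactly $n$ in $T$, with leading coefficient $\pm j^j(n-j)^{n-j}c^{j-1}$. The other coefficients contribute only lower-order terms in each group, so this leading coefficient is independent of $\mathbf a'$ and of $\mathbf b$.

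To make this rigorous I would check the computation against the trinomial discriminant $\disc(x^n+Tx^j+c)$. Alternatively, I would read off the top $T$-degree term using the weighted homogeneity of $\Delta$. The boundedness of the degree of $\mathcal R_j$ is immediate, since $\Delta$ has bounded degree.

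\emph{Nonvanishing of $\mathcal R_j$.} Because the leading coefficient in $T$ is a nonzero constant, $\mathcal R_j(\mathbf a')$ specializes correctly: it vanishes at $\mathbf a'$ if and only if $\Delta(\mathbf a',\cdot)$ and $\partial_T\Delta(\mathbf a',\cdot)$ have a common root. So it suffices to show that the common zero locus of $\Delta$ and $\partial_T\Delta$ on $L$ does not project dominantly onto $\A^{s-1}$.

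The key input is a local description of the discriminant near $\Sigma_1\setminus\Sigma_2$. At a polynomial $f_0$ with exactly one double root $r$ and otherwise simple roots, I claim that $d\Delta=\kappa\,\delta f(r)$ with $\kappa\ne0$. To see this, factor $f$ locally as a quadratic factor times a coprime factor. The discriminant is then a unit times the discriminant of the quadratic factor, and that quadratic discriminant has differential proportional to the variation of the quadratic's value at its double root. Applying this to $\delta f=x^j\,\delta T$ gives $\partial_T\Delta=\kappa r^j$. This is nonzero because $r\ne0$, which holds since $c\ne0$.

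It follows that the common zeros of $\Delta$ and $\partial_T\Delta$ on $L$ lie in $L\cap\Sigma_2$. By hypothesis this locus has dimension at most $s-2$, so its projection to $\mathbf a'$-space is not dominant. Hence for generic $\mathbf a'$ there is no common root, and $\mathcal R_j\ne0$.

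The main difficulty is justifying the claim $d\Delta=\kappa\,\delta f(r)$ with $\kappa\ne0$ cleanly. I would do this via the factorization $\Delta(f)=\disc(q)\disc(h)\Res(q,h)^2$ for $f=qh$, with $q$ the local quadratic factor, using the implicit function theorem for the factorization near $f_0$.
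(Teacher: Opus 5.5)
Your proposal is correct and follows essentially the paper's proof. For the leading term you use the same Newton-polygon/Puiseux asymptotics at $T=\infty$, organized through $\prod f'(\alpha)$ rather than the paper's pairwise squared root differences; both give $\pm j^j(n-j)^{n-j}c^{j-1}$. For the nonvanishing you use the same first-order identity $\partial_T\Delta=-4r^jh(r)^3\disc(h)\neq0$ at a simple double root, which confines the common zeros of $\Delta$ and $\partial_T\Delta$ to $L\cap\Sigma_2$ and so contradicts the dimension hypothesis. Your local factorization $f=qh$ with $\Delta=\pm\disc(q)\disc(h)\Res(q,h)^2$ is a clean justification of that identity and yields exactly the paper's constant.
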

\begin{proof}
Consider the roots as formal Puiseux series as $T\to\infty$.
There are $j$ small roots of size $T^{-1/j}$, whose leading scaled
values satisfy $y^j+c=0$, and $n-j$ large roots of size
$T^{1/(n-j)}$, whose leading scaled values satisfy $y^{n-j}+1=0$.
All other coefficients are held fixed in this limit. In the product
of squared root differences, the small-small pairs contribute
$T^{-(j-1)}$, the large-large pairs contribute $T^{n-j-1}$,
and the cross pairs contribute $T^{2j}$. The total exponent is $n$.
The two binomial discriminants and the cross product give the
nonzero coefficient \eqref{eq:disc-leading}, up to sign. This also
proves that it is independent of the other coefficients.

At a polynomial with exactly one double root $r$ and all other
roots simple, write $f=(x-r)^2h$ with $h(r)\ne0$. The first-order
discriminant identity for the perturbation $f+\varepsilon x^j$ is
\begin{equation}\label{eq:disc-derivative}
 \partial_T\Delta=-4r^j h(r)^3\disc(h)\ne0.
\end{equation}
For example, the two nearby roots differ to first order by the
square root of $-4\varepsilon r^j/h(r)$, which gives the displayed
identity upon taking the remaining root differences. The root
$r$ is nonzero because $c\ne0$. At a triple root or at two double
roots, the discriminant and all coefficient derivatives vanish.
Thus the simultaneous zero set of $\Delta$ and $\partial_T\Delta$
is exactly $L\cap\Sigma_2$ in characteristic zero. If $\mathcal R_j$
were identically zero, the two polynomials would have a common
factor over $\Q(\mathbf a')[T]$, and their common zero locus would
have a component of dimension $s-1$. This contradicts the assumed
dimension bound. The degree assertion follows from the bounded
degrees of discriminants and resultants.
\end{proof}

Let $K_f$ be the root field of an irreducible polynomial with proper
primitive Galois group. Choose a finite set $S$ of exceptional primes
containing all those already excluded, and put
\begin{equation}\label{eq:D0C0}
 D_0=\prod_{p\notin S}p^{v_p(|\disc K_f|)},\qquad C_0=\rad(D_0).
\end{equation}
For fixed degree, the discriminant exponents at each fixed rational
prime are bounded, so $|\disc K_f|\ll_{n,S}D_0$. At a tame
ramified prime $p\notin S$, let $k_p=v_p(|\disc K_f|)$. Tame
inertia identifies $k_p$ with the index of an inertia generator.
A proper primitive group contains no transposition. Consequently
\begin{equation}\label{eq:tame-indices}
 2\le k_p\le n-1,
 \qquad f\bmod p\in\Sigma_{k_p},
 \qquad D_0\ge C_0^2.
\end{equation}
For the middle assertion we use the standard index comparison at a
tame prime,
\[
 \ind(f\bmod p)\ge v_p(|\disc K_f|),
\]
as in \cite[Lemma~24]{Bhargava}. This formulation is important
because the order $\Z[\alpha]$ need not be $p$-maximal, so a naive
factorization of $f\bmod p$ need not record the prime factorization
of $p\cO_{K_f}$. In the $p$-maximal case the inequality reduces to
the familiar identity
$v_p(\disc K_f)=n-\sum_{\mathfrak p\mid p}f_{\mathfrak p}$
together with the corresponding factorization count.

Since index at least two forces both $\Delta$ and its coefficient
derivatives to vanish modulo $p$, we obtain
\begin{equation}\label{eq:rad-resultant}
 C_0\mid \mathcal R_j(\mathbf a').
\end{equation}

\begin{lemma}\label{lem:large-primes}
Suppose $\dim(L\cap\Sigma_2)\le s-2$. Fix $\eta>0$ and put
\[
 A_0=\prod_{\substack{p\mid C_0\\p>H^\eta}}p.
\]
The number of these proper primitive polynomials for which $A_0>H$
is $O_{n,I,\mathbf b,c,\eta}(H^{s-1})$.
\end{lemma}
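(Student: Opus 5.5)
This is the standard Bhargava large-prime elimination, with $T=a_j$ playing the role of the eliminated coefficient. The plan is to count, for each fixed $\mathbf a'$, the values of $T$ that can occur.

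First I would separate the vectors $\mathbf a'$ with $\mathcal R_j(\mathbf a')=0$. By Lemma~\ref{lem:discriminant}, $\mathcal R_j$ is a nonzero polynomial of bounded degree. So these $\mathbf a'$ number $O(H^{s-2})$ in the box, and allowing all $O(H)$ values of $T$ gives $O(H^{s-1})$.

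Now fix $\mathbf a'$ with $\mathcal R_j(\mathbf a')\ne0$. Since $|\mathcal R_j(\mathbf a')|\ll H^{B}$ for some $B=B(n)$, the integer $\mathcal R_j(\mathbf a')$ has at most $B/\eta+O(1)$ prime divisors exceeding $H^\eta$. By \eqref{eq:rad-resultant}, every prime dividing $A_0$ divides $\mathcal R_j(\mathbf a')$. Hence $A_0$ ranges over a set $\mathcal A(\mathbf a')$ of divisors of the product of these large primes, and $|\mathcal A(\mathbf a')|\le 2^{B/\eta+O(1)}=O_\eta(1)$. For a fixed candidate $A$ in this set with $A>H$, and each prime $p\mid A$, the polynomial $f\bmod p$ has index at least $2$ by \eqref{eq:tame-indices}, since $p\mid C_0$ and $p\notin S$. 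Therefore $\Delta(\mathbf a',T)\equiv\partial_T\Delta(\mathbf a',T)\equiv0\pmod p$. As $p\nmid\mathcal R_j(\mathbf a')$ fails here, I would use a slightly stronger fact: the index-two condition forces $f\bmod p$ to have a root of multiplicity at least $2$, and then $T\bmod p$ lies among the roots of $\Delta(\mathbf a',\cdot)\bmod p$. Lemma~\ref{lem:discriminant} shows that this polynomial has degree exactly $n$ in $T$, with leading coefficient \eqref{eq:disc-leading}, which is a unit for $p\notin S$ once $S$ contains the primes dividing $n!\,c$. So $\Delta(\mathbf a',\cdot)\bmod p$ has at most $n$ roots.

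By the Chinese remainder theorem, $T$ then lies in at most $n^{\omega(A)}=O_\eta(1)$ residue classes modulo $A$. Since $A>H$ and $|T|\le H$, each class contains at most one admissible $T$. Summing over $A\in\mathcal A(\mathbf a')$ gives $O_\eta(1)$ values of $T$ for each $\mathbf a'$. Summing over the $O(H^{s-1})$ choices of $\mathbf a'$ gives the bound $O(H^{s-1})$.

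The main obstacle is the subtlety that $A_0$ is defined from the field $K_f$, which depends on $T$, while the counting fixes $A$ before $T$. I would resolve it as above: enumerate all candidate $A$ among squarefree divisors of the large-prime part of $\mathcal R_j(\mathbf a')$, which does not depend on $T$. For each candidate, count the $T$ compatible with it, using only the implication $p\mid A_0\Rightarrow \Delta\equiv0\pmod p$. The divisibility by $\mathcal R_j$ is used only to bound the number of candidates. A secondary point is to make sure $S$ contains the primes dividing the leading coefficient \eqref{eq:disc-leading}, so that the degree of $\Delta$ in $T$ does not drop modulo $p$.
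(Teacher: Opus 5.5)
Your proposal is correct and follows the paper's proof essentially step for step: you discard the $O(H^{s-2})$ tuples with $\mathcal R_j(\mathbf a')=0$, use \eqref{eq:rad-resultant} to restrict $A_0$ to boundedly many squarefree divisors of the large-prime part of $\mathcal R_j(\mathbf a')$, and count $T$ through the $n^{\omega(A_0)}$ residue classes of $\Delta(\mathbf a',T)\equiv0\pmod{A_0}$, using that \eqref{eq:disc-leading} is a unit at these primes. One trivial correction: $[-H,H]$ has length $2H$ and you only know $A>H$, so a residue class can contain two admissible values of $T$, not one; this still gives $O(1)$ per class, as the paper states.
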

\begin{proof}
By Lemma~\ref{lem:discriminant}, the tuples with $\mathcal R_j(\mathbf a')=0$
number $O(H^{s-2})$, and allowing all $T$ costs $O(H^{s-1})$.
For each other tuple, $0<|\mathcal R_j(\mathbf a')|\ll H^{L_n}$, where
$L_n$ depends only on $n$. It has only boundedly many prime divisors
exceeding $H^\eta$, and hence only boundedly many possible products
$A_0$. For any such product, the congruence
\[
 \Delta(\mathbf a',T)\equiv0\pmod{A_0}
\]
has at most $n^{\omega(A_0)}=O_{n,\eta}(1)$ residue classes: the
leading coefficient \eqref{eq:disc-leading} is a unit at every
prime under consideration. Because $A_0>H$, each class has $O(1)$
representatives in $[-H,H]$. Sum over the $O(H^{s-1})$ choices of
$\mathbf a'$.
\end{proof}

\section{The primitive-group sieve and proof of the main theorem}\label{sec:completion}

Let $F_n(G,X)$ denote the number of isomorphism classes of degree-$n$
fields of absolute discriminant at most $X$ whose Galois closure acts
on the $n$ embeddings as the transitive group $G\le S_n$. We state
a transfer result so that improved field counts can be inserted
without repeating the sieve.

\begin{proposition}\label{prop:transfer}
Let $L$ be a fixed slice \eqref{eq:family}, with $s\ge3$ and
$\dim(L\cap\Sigma_2)\le s-2$. Suppose it admits bounded
nonnegative local majorants satisfying
\eqref{eq:abstract-Fourier}--\eqref{eq:rho-condition} for some
$0<\theta\le1$. Let $G<S_n$ be proper primitive and suppose
\begin{equation}\label{eq:field-exponent}
 F_n(G,X)\ll_{n,G,\epsilon}X^{a(G)+\epsilon}.
\end{equation}
If
\begin{equation}\label{eq:transfer-threshold}
 2a(G)<s-1,
\end{equation}
then there are $O(H^{s-1})$ polynomials in $\cF(H)$ with Galois
group $G$, with the implied constant depending on the fixed family
and on $G$.
\end{proposition}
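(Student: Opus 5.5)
We follow Bhargava's primitive-group sieve, adapted to the slice. Fix $G<S_n$ proper primitive and a small $\eta>0$. For each polynomial $f\in\cF(H)$ with Galois group $G$, we use the quantities $D_0$ and $C_0$ of \eqref{eq:D0C0}. By \eqref{eq:tame-indices}, every $p\mid C_0$ satisfies $2\le k_p\le n-1$ and $f\bmod p\in\Sigma_{k_p}$. We also have $D_0\ge C_0^2$ and $|\disc K_f|\ll D_0$. Write $C_0=A_0B_0$, where $A_0$ is the product of the primes above $H^\eta$ and $B_0$ the product of those at most $H^\eta$. Lemma~\ref{lem:large-primes} already disposes of the polynomials with $A_0>H$, at cost $O(H^{s-1})$. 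For the rest, $C_0\le H\cdot\prod_{p\le H^\eta}p$, which is too large. We therefore pass to a divisor: choose a squarefree $C\mid C_0$ with $C\le X:=H^{1+\delta}$ for a small $\delta$, such that the corresponding $D=\prod_{p\mid C}p^{k_p}$ is still large, say $D\ge D_0H^{-O(\eta)}$. This is done by greedily removing small primes. Each removed prime is at most $H^\eta$ and contributes at most $p^{n-1}$ to $D_0$. Any prime of size $\le H^\eta$ can be added to $A_0$ while keeping the product below $H^{1+\eta}$, so we either reach $C_0$ itself or a divisor whose $D$ is essentially all of $D_0$ apart from a factor $H^{O(\eta)}$.

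We split according to the size of $D_0$ against a threshold $Y$. In the first case, $D_0>Y$. The polynomial then lies in the union, over $C\le X$ and $D\gg YH^{-O(\eta)}$, of the local conditions in Lemma~\ref{lem:Poisson}. By \eqref{eq:aggregate} this union has size
\[
 \ll H^sY^{-1/2+\epsilon}H^{O(\eta)}+X^{s-1-\theta+\epsilon}.
\]
With $X=H^{1+\delta}$ and $\delta,\epsilon$ small relative to $\theta$, the second term is $O(H^{s-1})$. The first term is $O(H^{s-1})$ once $Y=H^{2+O(\eta)+O(\epsilon)}$. This is where \eqref{eq:rho-condition} enters, through the convergence of the weighted tail.

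In the second case, $D_0\le Y$. Then $|\disc K_f|\ll Y$, so $K_f$ ranges over at most $F_n(G,O(Y))\ll Y^{a(G)+\epsilon}$ fields. By Lemma~\ref{lem:fixed-norm}, each field is the root field of $O((\log H)^{n-1})$ polynomials in the family, since the root has fixed norm $\pm c$ and Mahler measure $\ll H$. This gives a total of
\[
 H^{2a(G)+O(\eta+\epsilon)}(\log H)^{n-1}.
\]

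The main obstacle is reconciling the two cases. A naive threshold of $Y=H^2$ yields only $H^{2a(G)}$ from the field count, which is acceptable by the strict inequality \eqref{eq:transfer-threshold}. However, the sieve side requires $Y$ slightly larger than $H^2$ to absorb the $H^{O(\eta)}$ losses from the greedy truncation. The small parameters must therefore be chosen in the correct order. First fix the gap $\gamma=s-1-2a(G)>0$. Then choose $\eta,\delta,\epsilon$ so small that all accumulated powers $H^{O(\eta+\delta+\epsilon)}$ are below $H^{\gamma/2}$ and below $H^{\theta/2}$ on the Poisson side.

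Two further checks are needed. First, the greedy choice of $C$ must truly keep $C\le X$ while losing only $H^{O(\eta)}$ in $D$. Second, the dependence on $\eta$ of the constant in Lemma~\ref{lem:large-primes} must be harmless, which it is because $\eta$ is fixed in terms of $n$, $s$, $a(G)$ and $\theta$. The reducible and partial-product contributions play no role, since every polynomial counted is irreducible with group $G$.
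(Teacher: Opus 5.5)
Your overall structure matches the paper's. You use Lemma~\ref{lem:large-primes} for $A_0>H$. You use a field count with Lemma~\ref{lem:fixed-norm} when $D_0$ is at most about $H^2$. Otherwise you apply Lemma~\ref{lem:Poisson} to a divisor $C\mid C_0$ with $C\le X=H^{1+\delta}$.

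\textbf{The gap is in the truncation step, which is false as stated.} You claim you can choose $C\mid C_0$ with $C\le X$ and $D=\prod_{p\mid C}p^{k_p}\ge D_0H^{-O(\eta)}$, because each discarded prime is at most $H^\eta$. But the number of discarded primes is unbounded, so the total loss is not $H^{O(\eta)}$. For instance, let $C_0\approx H^3$ be a product of primes of size about $H^{\eta/2}$, all with $k_p=2$. Then $D_0\approx H^6$, yet every divisor $C\le H^{1+\delta}$ has $D=C^2\le H^{2+2\delta}$. So when $C_0>X$ the hypothesis $D_0>Y$ cannot be carried over to the truncated divisor. In that case $D_0$ is simply irrelevant.

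\textbf{The fix, which is the paper's argument, is to treat $C_0>X$ separately and get the size of $D$ from $C$ itself.}
\begin{itemize}
\item Take $\eta=\delta/4$. If $A_0\le H$, adjoin prime factors of $C_0/A_0$ (each at most $H^{\eta}$) to $A_0$ until the product $B_0$ lands in the band $H^{1+3\delta/4}<B_0\le H^{1+\delta}$. This is possible because $C_0>H^{1+\delta}$.
\item Then $k_p\ge2$ from \eqref{eq:tame-indices} gives $D'\ge B_0^2>H^{2+3\delta/2}$, whatever $D_0$ is. You then use $Y=H^{2+3\delta/2}$ in \eqref{eq:aggregate}.
\item This requires $\eta$ strictly smaller than $\delta$. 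If you stop just above $H$, as your phrase ``keeping the product below $H^{1+\eta}$'' suggests, you only get $D>H^2$. Then the main term $H^sY^{-1/2+\epsilon}$ is $H^{s-1+2\epsilon}$, not $O(H^{s-1})$.
\item The hypothesis $D_0>Y$ is used only when $C_0\le X$. There $C=C_0$ and $D=D_0$, with no truncation at all.
\end{itemize}
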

\begin{proof}
Choose $\delta>0$ sufficiently small that
\begin{equation}\label{eq:delta-choice}
 2a(G)(1+\delta)<s-1,
 \qquad (s-1-\theta)(1+\delta)<s-1.
\end{equation}
Let $D_0,C_0$ be as in \eqref{eq:D0C0}. If
$D_0\le H^{2+2\delta}$, the field count and
Lemma~\ref{lem:fixed-norm} give
\[
 \ll H^{2a(G)(1+\delta)+\epsilon}(1+\log H)^{n-1}
      \ll H^{s-1},
\]
where $\epsilon$ is chosen smaller than the fixed margin in
\eqref{eq:delta-choice}.

If $D_0>H^{2+2\delta}$ and $C_0\le H^{1+\delta}$, apply
Lemma~\ref{lem:Poisson} with $X=H^{1+\delta}$ and
$Y=H^{2+2\delta}$. The main exponent is
$s-1-\delta+O(\epsilon)$; the error exponent is
$(s-1-\theta)(1+\delta)+O(\epsilon)$. Both are strictly less
than $s-1$ for a sufficiently small $\epsilon$.

Finally suppose $C_0>H^{1+\delta}$, and put $\eta=\delta/4$.
The case $A_0>H$ is handled by Lemma~\ref{lem:large-primes}.
Otherwise begin with $A_0$ and multiply successively by prime
factors of $C_0/A_0$, each at most $H^{\delta/4}$, until the
resulting divisor $B_0\mid C_0$ lies in
\begin{equation}\label{eq:middle-band}
 H^{1+3\delta/4}<B_0\le H^{1+\delta}.
\end{equation}
This is possible because $C_0>H^{1+\delta}$. The corresponding
$D'=\prod_{p\mid B_0}p^{k_p}$ exceeds $H^{2+3\delta/2}$, by
\eqref{eq:tame-indices}. Apply Lemma~\ref{lem:Poisson} to all
possible $B_0,D'$ with $X=H^{1+\delta}$ and
$Y=H^{2+3\delta/2}$. Its main exponent is now
$s-1-3\delta/4+O(\epsilon)$, while its error exponent is unchanged.
Again both are strictly less than $s-1$. These three cases exhaust
the polynomials and prove the proposition.
\end{proof}

Bhargava's primitive field-counting theorem
\cite[Theorem~20]{Bhargava} states that
\begin{equation}\label{eq:Bhargava-fields}
 F_n(G,X)\ll_{n,\epsilon}
 X^{(n+2)/4-1+1/\ind(G)+\epsilon},
\end{equation}
where $\ind(G)$ is the least nonzero index of an element of $G$.
For a proper primitive group, $\ind(G)\ge2$, and hence we may take
\begin{equation}\label{eq:a-n4}
 a(G)=n/4.
\end{equation}

\begin{proof}[Proof of Theorem~\ref{thm:main}]
The lower bound is Lemma~\ref{lem:lower}. For the upper bound,
\eqref{eq:gcd} holds by Remark~\ref{rem:gcd-dense}, so
Lemma~\ref{lem:partial-products} counts all reducibles and all
rational partial-product cases in $O(H^{s-1})$. The discriminant
is a nonzero polynomial on the slice, by
Lemma~\ref{lem:discriminant}, so discriminant-zero polynomials also
number $O(H^{s-1})$.

The remaining imprimitive polynomials contribute
$O(H^{\beta(n)})$ by Proposition~\ref{prop:imprimitive}, with
$\beta(n)\le n/2<s-1$; in prime degree there are none. For proper
primitive groups, Proposition~\ref{prop:Fourier} supplies
\eqref{eq:abstract-Fourier} with $\theta=1$ and the capped
$\rho_k$ in \eqref{eq:rho}. These satisfy
\eqref{eq:rho-condition}. Combining \eqref{eq:a-n4} with
\eqref{eq:threshold} gives $2a(G)<s-1$, so
Proposition~\ref{prop:transfer} applies. There are only finitely
many permutation groups of degree $n$. Summing their contributions
proves the upper bound in \eqref{eq:main}.
\end{proof}

\begin{proof}[Proof of Corollary~\ref{cor:one-extra}]
Here $s=n-2$, so $s>n/2+1$ is equivalent to $n>6$.
Theorem~\ref{thm:instances}(c) supplies index-regularity for every
position and value of the additional coefficient.
\end{proof}

\section{Degrees and endpoint cases}\label{sec:edges}

The smallest number of varying coefficients in the main theorem,
and the corresponding number of additional prescribed interior
coefficients, are as follows. The leading coefficient and the
nonzero constant term are not included in the last column.
\begin{center}
\begin{tabular}{@{}rrrr@{}}
\toprule
Degree $n$ & Free coefficients $s$ & Exceptional count & Additional fixed\\
\midrule
6 & 5 & $\asymp H^4$ & 0\\
7 & 5 & $\asymp H^4$ & 1\\
8 & 6 & $\asymp H^5$ & 1\\
9 & 6 & $\asymp H^5$ & 2\\
10 & 7 & $\asymp H^6$ & 2\\
11 & 7 & $\asymp H^6$ & 3\\
12 & 8 & $\asymp H^7$ & 3\\
13 & 8 & $\asymp H^7$ & 4\\
\bottomrule
\end{tabular}
\end{center}
In degrees seven and eight, the additional coefficient may be at
any position and have any integral value. In the rows with two or
more additional fixed coefficients, arbitrary positions require the
genericity qualification of Theorem~\ref{thm:instances}(b), unless
index-regularity is verified separately. Initial and final free
blocks require no qualification on the prescribed values.

\subsection{Even degrees: isolating the alternating group}
Let $n$ be even and consider the next smaller value
\[
 s=n/2+1.
\]
The imprimitive estimate already gives the target $O(H^{n/2})$,
including its endpoint, because Proposition~\ref{prop:imprimitive}
has no logarithmic factor. For an initial or final free block and
$n\ge8$, the local Fourier argument also works: $s\ge5$, and
$\rho_k=\min(k,s)$ satisfies \eqref{eq:rho-condition}. The remaining
issue for the primitive sieve is that \eqref{eq:a-n4} gives equality
rather than strict inequality in \eqref{eq:transfer-threshold}.

For $n\ge9$, a proper primitive group other than $A_n$ has
$\ind(G)\ge3$; see the primitive-group index bounds in
\cite[Section~2]{Bhargava}. Thus \eqref{eq:Bhargava-fields} gives
\[
 a(G)\le n/4-1/6<n/4\qquad(G\ne A_n).
\]
We therefore obtain a further unconditional statement.

\begin{corollary}\label{cor:even-edge}
Let $n\ge10$ be even, $c\ne0$, and let the free coefficients form
an initial or final block of size $s=n/2+1$. For every choice of
the remaining fixed coefficients, the number of exceptional
polynomials \emph{other than irreducible polynomials with Galois
group $A_n$} is $O(H^{n/2})$.
\end{corollary}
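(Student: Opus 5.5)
We follow the proof of Theorem~\ref{thm:main}, component by component, with $s=n/2+1$ and the free coefficients forming an initial or final block. The target is $O(H^{s-1})=O(H^{n/2})$.

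\emph{Degenerate cases.} First, condition \eqref{eq:gcd} holds: an initial block contains $1$, and Remark~\ref{rem:gcd-dense} applies. For a final block, apply the weighted reversal \eqref{eq:reversal} to reduce to the initial case. Alternatively, note that the block $\{n/2,\ldots,n-1\}$ contains two consecutive integers, so the gcd with $n$ is $1$. Lemma~\ref{lem:partial-products} then bounds the reducible and rational partial-product cases by $O(H^{s-1})$. The discriminant-zero polynomials are handled by Lemma~\ref{lem:discriminant}. This uses $\dim(L\cap\Sigma_2)\le s-2$, which is given by Theorem~\ref{thm:instances}(a) for every prescribed tuple.

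\emph{Imprimitive and primitive groups.} Proposition~\ref{prop:imprimitive} bounds the imprimitive polynomials by $O(H^{\beta(n)})$. Since $\beta(n)\le n/2=s-1$ and there is no logarithmic factor, this is $O(H^{s-1})$. The remaining case is a proper primitive group $G\ne A_n$; the groups $S_n$ and $A_n$ are excluded by the statement. There are finitely many such $G$, and for each we apply Proposition~\ref{prop:transfer}. Its hypotheses are checked as follows:
\begin{itemize}
\item[(i)] $s\ge3$ holds, indeed $s\ge6$ since $n\ge10$.
\item[(ii)] $\dim(L\cap\Sigma_2)\le s-2$ holds, as above.
\item[(iii)] Local majorants with $\theta=1$ and $\rho_k=\min(k,s)$ are supplied by Proposition~\ref{prop:block-Fourier}, which needs only $s\ge5$ and not \eqref{eq:threshold}.
\item[(iv)] Condition \eqref{eq:rho-condition} holds. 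For $k\le s$ we have $k-k/2=k/2\ge1$. For $k>s$ we need $s-k/2\ge1$, i.e.\ $k\le 2s-2=n$, and this holds since $k\le n-1$.
\end{itemize}

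\emph{The threshold.} The key input is the field-count exponent. For $n\ge9$, every proper primitive $G\ne A_n$ has $\ind(G)\ge3$, so \eqref{eq:Bhargava-fields} gives $a(G)\le n/4-1/6$. Then $2a(G)\le n/2-1/3<n/2=s-1$, which is \eqref{eq:transfer-threshold}. Proposition~\ref{prop:transfer} therefore gives $O(H^{s-1})$ for each such $G$, and summing over the finitely many groups finishes the proof.

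The main point to double-check is (iv) at the boundary $k=n-1$, where the capped $\rho_k=s$ for $k>s$ must still satisfy $\rho_k-k/2\ge1$. The arithmetic $s-(n-1)/2=3/2$ confirms it. One should also confirm that Proposition~\ref{prop:transfer} never invokes \eqref{eq:threshold} implicitly; it does not, since its hypotheses are stated abstractly. The restriction to even $n\ge10$ is needed so that $\ind(G)\ge3$ applies.
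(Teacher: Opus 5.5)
Your proposal is correct and follows the paper's proof. Like the paper, you use Lemma~\ref{lem:partial-products}, Lemma~\ref{lem:discriminant} and Proposition~\ref{prop:imprimitive} (with $\beta(n)=n/2$ and no logarithmic loss) for the degenerate and imprimitive cases, and Proposition~\ref{prop:block-Fourier} in Proposition~\ref{prop:transfer} with the strict exponent $a(G)\le n/4-1/6$ for primitive $G\ne A_n$; your check of \eqref{eq:rho-condition} at $k=n-1$ is the paper's observation that $s\ge(n+1)/2$. One harmless slip: the final block of size $n/2+1$ is $\{n/2-1,\ldots,n-1\}$, not $\{n/2,\ldots,n-1\}$, but your consecutive-integers argument for \eqref{eq:gcd} is unaffected.
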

\begin{proof}
Use Proposition~\ref{prop:block-Fourier}, the preceding strict field
exponents, and Proposition~\ref{prop:transfer}. Reducible,
discriminant-zero, and imprimitive polynomials are already counted
in $O(H^{n/2})$ by the preceding arguments.
\end{proof}

Consequently any field-counting input $F_n(A_n,X)\ll X^{a+\epsilon}$
with $a<n/4$ completes this even-degree endpoint for these block
families. This is a criterion in terms of the field estimate used,
not an assertion that $n/4$ is the best available exponent in every
degree. It also shows exactly where future improvements can enter.
In degree eight, the proper primitive group $8T48$ has an improved
field-counting exponent strictly below $2$ by
\cite[Theorem~1.8 and Table~2]{ChowDietmann}; this removes that
particular small-degree obstruction to the same comparison.

\subsection{Degrees three, four, five, and six}
For $n=3$ with only the leading and constant terms fixed, $s=2$
and the target count is $O(H)$. Index at least two modulo a good
prime forces
\[
 f(x)=(x-r)^3,\qquad -r^3=c.
\]
This is a zero-dimensional locus. A nonzero normalized Fourier
coefficient is in general only $O(p^{-2})$, with no positive
$\theta$ in \eqref{eq:abstract-Fourier}. The aggregate error at
$\theta=0$ would be $X$, while the middle-band argument needs a
cutoff beyond $H$. In addition, even the expected cyclic-cubic
field exponent $a=1/2$ gives $2a=s-1$, rather than a strict
inequality. Thus the present method leaves a logarithmic gap in the
cyclic cubic case. These are limitations of the proof, not evidence
against the desired cubic bound.

We now complete the proof of Theorem~\ref{thm:fixed-constant} in the
two degrees not covered by Theorem~\ref{thm:main}.

\begin{proposition}\label{prop:quintic-fixed}
For fixed $c\ne0$,
\[
 E_{5,c}(H)\ll_c H^3.
\]
\end{proposition}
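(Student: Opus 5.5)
We plan to run the proof of Theorem~\ref{thm:main} with $n=5$ and $s=4$, $I=\{1,2,3,4\}$, and repair the one place where it fails. The easy parts carry over directly. Since $1\in I$, condition \eqref{eq:gcd} holds, so Lemma~\ref{lem:partial-products} bounds reducible polynomials by $O(H^3)$. Lemma~\ref{lem:discriminant} does the same for discriminant-zero polynomials. Because $5$ is prime there are no imprimitive groups. The proper primitive subgroups of $S_5$ are $C_5$, $D_5$, $F_{20}$ and $A_5$.

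For $C_5$, $D_5$ and $F_{20}$ we would insert known field counts with $a(G)<3/2$ into Proposition~\ref{prop:transfer}; for instance $F_5(F_{20},X)\ll X^{1+\epsilon}$ and smaller exponents for $D_5$ and $C_5$ suffice. The threshold is $2a(G)<s-1=3$. Before applying the transfer we must check its Fourier hypotheses. Proposition~\ref{prop:block-Fourier} needs $s\ge5$, so it cannot be used. The inputs we do have are these:
\begin{itemize}[label={}]
\item Theorem~\ref{thm:instances}(a) gives index-regularity for the initial block $I=\{1,\dots,4\}$ with $J=\varnothing$.
\item Consequently $\rho_k=\min(k,4)$, and \eqref{eq:rho-condition} holds for $k\le4$.
\item The higher-index strata have codimension at least $3$, so they give $O(p^{-3})$ trivially.
\item The triple-root weight $w_3$ gives $O(p^{-3})$ at nonzero frequencies, by the argument in Proposition~\ref{prop:Fourier}, since it uses only three-column Vandermonde minors.
\end{itemize}

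The delicate term is $w_{22}$, where $f=q^2(x-t)$. The only free parameters are the quadratic $q$ and $t$, with $t$ fixed by $v^2t=-c$. So $w_{22}$ is supported on a two-parameter family in $\F_p^4$. Its Fourier transform at $g\ne0$ is
\[
p^{-4}\sum_{u,v} e_p\bigl(g\cdot \mathbf a(u,v)\bigr),
\]
a two-variable exponential sum over the curve data. The target is $O(p^{-3})$, or at least $p^{-2-\theta}$ for some $\theta>0$, which requires square-root cancellation in both $u$ and $v$.

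We would proceed as follows:
\begin{enumerate}[(i)]
\item Write the coefficients of $(x^2+ux+v)^2(x+c/v^2)$ explicitly; they are polynomial in $u$ with Laurent coefficients in $v$.
\item For fixed $v$, the phase is at most quadratic in $u$. The sum over $u$ is then a quadratic Gauss sum of size $p^{1/2}$, except where the $u^2$ coefficient vanishes. This coefficient is a linear form in $g$ with Laurent coefficients in $v$.
\item The leftover sum over $v$ is a one-variable sum of a rational function twisted by a Legendre symbol. Weil's bound gives a further $p^{1/2}$, unless the phase degenerates; a degenerate phase is a bounded-degree algebraic condition on $g$.
\item The degenerate values of $v$ are $O(1)$ in number. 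Each contributes at most $p$ (the trivial $u$-sum), which is still $O(p^{-3})$ after normalization.
\end{enumerate}
This gives $|\widehat w_{22}(g)|\ll p^{-4}\cdot p^{1/2}\cdot p^{1/2}\cdot O(1)=O(p^{-3})$ for generic $g$. For the exceptional $g$ it gives at least $p^{-5/2}$, which is enough provided the exceptional set is handled. We expect to absorb it by the gcd device in Lemma~\ref{lem:Poisson}, with $\theta=1/2$, where $s-1-\theta=5/2<3$ still leaves room.

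The main obstacle is $A_5$. Bhargava's exponent \eqref{eq:Bhargava-fields} gives $a=5/4$ with $\ind(A_5)=2$, and $2a=5/2<3$, so the transfer applies once the Fourier input is secured. The hardest step is therefore the uniform Gauss-sum treatment of $w_{22}$ when $s=4$: controlling the degenerate directions of the $u$-quadratic and of the $v$-sum uniformly in $g$, so that \eqref{eq:abstract-Fourier} holds with some $\theta>0$. We would first try to show that these degenerate directions lie on a bounded union of lines in $g$-space, and handle them by a separate trivial count.
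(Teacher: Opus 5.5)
Your proposal is correct and follows essentially the paper's route. It uses the same majorant $w_3+w_{22}+Z_{3,p}$, the three-column Vandermonde bound for $w_3$ at $s=4$, a quadratic Gauss sum in $u$ for $w_{22}$, $\theta=1/2$ in Proposition~\ref{prop:transfer}, and Bhargava's exponent $a(G)\le 5/4$, which already covers every proper primitive quintic group, so no group-by-group field counts are needed.

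The paper simply sums trivially over $v$, and you should do the same. For $g\ne0$ the $u^2$- and $u$-coefficients $g_2+cg_3/v^2$ and $2g_1+2cg_2/v^2+2vg_3+2cg_4/v$ cannot both vanish identically in $v$. Hence only $O(1)$ values of $v$ give a constant phase, and $|\widehat w_{22}(g)|\ll p^{-5/2}$ uniformly in $g\ne0$. This makes your Weil step in $v$ and any separate treatment of exceptional frequencies unnecessary; note also that the gcd device in Lemma~\ref{lem:Poisson} is not designed for such frequencies.
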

\begin{proof}
The reducible and discriminant-zero contributions are $O_c(H^3)$ by
Lemma~\ref{lem:partial-products} and Lemma~\ref{lem:discriminant}.
There are no transitive imprimitive groups of prime degree, so it
remains to treat proper primitive groups.

The fixed-constant slice has $s=4$, and it is index-regular by
Theorem~\ref{thm:instances}(a). For index at least three, the
trivial Fourier estimate together with Lemma~\ref{lem:local-densities} gives
$O(p^{-3})$. For index two, use the same majorant
$W_{2,p}=w_3+w_{22}+Z_{3,p}$ as in the proof of
Proposition~\ref{prop:Fourier}. The triple-root term has
$|\widehat w_3(g)|\ll p^{-3}$ for $g\ne0$. The only missing term is
$w_{22}$, because in degree five the quotient by the square of a
quadratic has no free coefficient.

Write
\[
 f=(x^2+ux+v)^2(x+c/v^2),\qquad v\in\F_p^\times.
\]
If $g=(g_1,g_2,g_3,g_4)$ corresponds to the coefficients of
$x^4,x^3,x^2,x$, the phase as a polynomial in $u$ has quadratic and
linear coefficients
\[
 g_2+\frac{cg_3}{v^2},\qquad
 2g_1+\frac{2cg_2}{v^2}+2vg_3+\frac{2cg_4}{v}.
\]
For $g\ne0$ these cannot both vanish identically as functions of $v$:
the first would give $g_2=g_3=0$, and then the second gives
$g_1=g_4=0$. Hence only $O(1)$ values of $v$ have constant phase.
For all other $v$ the quadratic Gauss-sum bound gives $O(p^{1/2})$
for the $u$-sum. Summing over $v$ and normalizing by $p^4$ yields
\[
 |\widehat w_{22}(g)|\ll p^{-5/2}.
\]
The zero-frequency mass is $O(p^{-2})$. Thus
\eqref{eq:abstract-Fourier} holds with $\theta=1/2$ and
$\rho_k=\min(k,4)$; condition \eqref{eq:rho-condition} is satisfied.
Bhargava's field bound gives $a(G)\le5/4$, so
$2a(G)\le5/2<3=s-1$. Proposition~\ref{prop:transfer} therefore
counts every proper primitive group in $O_c(H^3)$. There are only
finitely many such groups, proving the result.
\end{proof}

The quartic endpoint has a different feature. Put
\[
 f=x^4+Ax^3+Bx^2+Dx+c.
\]
For a good prime, an index-at-least-two quartic is covered by the
triple-root family
\[
 T_r=(x-r)^3(x-c/r^3),\qquad r\in\F_p^\times,
\]
or a square family
\[
 S_{v,u}=(x^2+ux+v)^2,\qquad v^2=c.
\]
The square family has exceptional Fourier directions when $c$ is a
square. The following estimate treats them explicitly.

\begin{lemma}\label{lem:quartic-middle}
Fix $c\ne0$ and $2\le Q\le X$. The number of quartics in the
fixed-constant box that have index at least two modulo every prime
dividing some squarefree $q\in(Q,X]$, supported outside a fixed
finite set of exceptional primes, is
\begin{equation}\label{eq:quartic-middle}
 \ll_{c,\epsilon}H^2+H^3Q^{-1+\epsilon}
              +H^{-1}X^{5/2+\epsilon}
              +H^{5/2}Q^{-1/2+\epsilon}.
\end{equation}
The last term is needed only when $c$ is a positive integer square.
\end{lemma}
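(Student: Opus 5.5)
We follow the Poisson route of Lemma~\ref{lem:Poisson}, now for the quartic slice with $s=3$, coordinates $(A,B,D)\in[-H,H]^3$. For a good prime $p$ we use the majorant
\[
 W_p=w_T+w_S,\qquad
 w_T(f)=\#\{r\in\F_p^\times:(x-r)^3\mid f\},\qquad
 w_S(f)=\#\{(u,v):v^2=c,\ (x^2+ux+v)^2=f\}.
\]
By the parametrizations $T_r$ and $S_{v,u}$ displayed above, $W_p$ dominates the indicator of index at least two. The zero-frequency masses are $\widehat w_T(0)\ll p^{-2}$, since there are $p$ values of $r$, each with a single polynomial, and $\widehat w_S(0)\ll p^{-2}$, since there are at most $2p$ values of $(u,v)$. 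For $g=(g_1,g_2,g_3)\ne0$, dual to $(A,B,D)$, we estimate the phases. For $T_r$ the coefficients are explicit Laurent polynomials in $r$:
\[
 A=-3r-c/r^3,\qquad B=3r^2+3c/r^2,\qquad D=-r^3-3c/r.
\]
The phase $g_1A+g_2B+g_3D$ is therefore a nonconstant Laurent polynomial in $r$ of bounded degree. The Weil bound then gives $|\widehat w_T(g)|\ll p^{1/2}p^{-3}=p^{-5/2}$.

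For $S_{v,u}$ we have $A=2u$, $B=u^2+2v$ and $D=2uv$, so the phase is the quadratic $g_2u^2+(2g_1+2vg_3)u+\mathrm{const}$ in $u$. If $g_2\ne0$, the Gauss sum gives $O(p^{1/2})$ and hence $\widehat w_S(g)\ll p^{-5/2}$. If $g_2=0$ and $g_1+vg_3\ne0$ for both roots $v$ of $v^2=c$, the $u$-sum vanishes. The exceptional directions are $g_2=0$, $g_1=-vg_3$. They exist only when $c$ is a square mod $p$, and there $|\widehat w_S(g)|\asymp p^{-2}$.

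When $c$ is not a perfect square in $\Z$, the condition $g_1^2\equiv cg_3^2\pmod p$ with $g\ne0$ integral and small defines a proper condition. We then bound the Poisson error as in Lemma~\ref{lem:Poisson}, with $\theta=1/2$ generically. The exceptional directions must satisfy $g_1^2-cg_3^2\equiv0 \pmod p$ for every $p\mid q$ at which they are used; since $g_1^2-cg_3^2\ne0$ for small nonzero $g$ with $c$ a nonsquare, they are confined to divisors of a nonzero integer of size $\ll (q/H)^2$ and cost only $q^\epsilon$. Summing over squarefree $q\le X$ gives the error $H^{-1}X^{5/2+\epsilon}$. More precisely, the error for one $q$ is $H^3(q/H)^3q^{-5/2}$, summed over $q\le X$. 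The main terms are summed over $q>Q$ with $q$-density $\prod_{p\mid q}p^{-2}$, which gives $H^3Q^{-1+\epsilon}$. The $H^2$ term absorbs the case where the dual range is empty or the box boundary dominates.

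When $c=m^2$, the exceptional line $g_2=0$, $g_1=\mp m g_3$ is rational, and these frequencies have Fourier coefficient of size $\prod p^{-2}$ only. They correspond exactly to the rational family $S_{\pm m,u}$, whose exact count is $O(H)$. We separate them by splitting $w_S=w_S^{+}+w_S^{-}$ according to $v\equiv\pm m$. For the exceptional lattice of frequencies $g=(\mp m g_3,0,g_3)$ there are $\ll q/H$ vectors, each contributing $H^3q^{-2}$. Alternatively we count directly: the condition that $f\equiv(x^2+ux\pm m)^2$ modulo $p$ for all $p\mid q$ is the condition that the two linear forms $B-A^2/4\mp 2m$ and $D\mp mA$ vanish modulo $q$ after completing; the second is linear. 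This bounds the number of such quartics by $H^3q^{-1}$ per $q$, but with a mixed-prime structure. We mix the generic and exceptional behavior across primes via the CRT factorization $q=q_1q_2$. The total is bounded by $\sum_{q>Q}H^3q_1^{-2}q_2^{-1}\cdot(\text{gain})$. Balancing the exceptional contribution against the $\sqrt{\ }$-cancellation yields $H^{5/2}Q^{-1/2+\epsilon}$.

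The main obstacle is this last step. Since $s=3$, the sum $\sum \gcd(\cdot)^\theta$ in Lemma~\ref{lem:Poisson} is at the edge of convergence, and the exceptional directions have no decay, so we must track them by hand. We would first try to count the exceptional-direction terms by summing over $q_2$ the lattice points on the line $g_1=\mp mg_3$, of which there are $\ll q/H$. That bounds their contribution by $H^2q^{-1}\cdot\prod_{p\mid q_2}p^{-1}$. We would then sum the resulting bound dyadically in $q$, using $q>Q$ together with the Cauchy–Schwarz split $q_2\le\sqrt{q}$ versus $q_2>\sqrt{q}$, to extract the $Q^{-1/2}$.
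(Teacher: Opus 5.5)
Your local computations agree with the paper's: the weights $T_r$ and $S_{v,u}$, the Weil and Gauss-sum bounds, and the exceptional directions $g_2=0$, $g_1+vg_3=0$. The non-square case is also fine in outcome, since bounding $\gcd(q,g_2,g_1^2-cg_3^2)\ll_c R^2$ trivially, with $R=q/H$, is exactly what produces $H^{-1}X^{5/2+\epsilon}$. (Your claim that these directions ``cost only $q^\epsilon$'', and your per-$q$ error $H^3(q/H)^3q^{-5/2}$, hold only after averaging the gcd over $g$, which you do not justify.)

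\textbf{The gap: the case $c=m^2$.} You flag this case yourself, and the plan you sketch cannot succeed. Take the configuration in which every $p\mid q$ is of type $S_+$. Your bound $H^2q^{-1}\prod_{p\mid q_2}p^{-1}$ then has no gain, since $q_2=1$. This is not an artifact of the Fourier analysis. The weight is supported on quartics with $D\equiv mA$ and $4B\equiv A^2+8m\pmod q$. For $q\gg H$ these are genuine quartics on the plane $D=mA$, about $H^2/q$ of them on average over $q$.

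Summing over squarefree $q\in(Q,X]$ therefore gives $\gg H^2\log(X/Q)$. This is not $O(H^2)$ in the regime $Q=H^{1+3\delta/4}$, $X=H^{1+\delta}$ of Proposition~\ref{prop:quartic-fixed}, where the right side of \eqref{eq:quartic-middle} is $O(H^2)$. No dyadic or Cauchy--Schwarz split over $q_2$ can repair this. The loss does not come from weak Fourier decay; it comes from counting the same planar quartics once for each of many moduli. For the same reason, the exceptional frequencies do not merely reflect the one-parameter rational family $S_{\pm m,u}$ with $O(H)$ members. They reflect concentration of the weights on the two-dimensional planes $D=\pm mA$.

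\textbf{The missing idea.} You need to use the relation $D\mp mA\equiv0\pmod{q_\pm}$, which you wrote down, at the level of the union rather than modulus by modulus.
\begin{enumerate}[(i)]
\item Remove the planes $D=\pm mA$ once, at cost $O(H^2)$. This, not the box boundary, is the source of the $H^2$ term.
\item Expand the product of local weights by assigning each $p\mid q$ a type $T$, $S_+$ or $S_-$, and let $q_\pm$ be the product of the $S_\pm$ primes.
\item Off the removed planes, $q_\pm$ divides the nonzero integer $D\mp mA$. Hence only assignments with $q_\pm\ll_c H$ can support unremoved quartics.
\item On the cone $g=(-mh,0,h)$ the $S_-$ factors vanish unless $p\mid h$. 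So the product transform is at most $K^{\omega(q)}q^{-5/2}q_+^{1/2}\gcd(h,q/q_+)^{1/2}$.
\item Summing over $1\le|h|\le R$ and using $q_+\ll H$ gives $H^{5/2}q^{-3/2+\epsilon}$ per $q$. Summing over $q>Q$ then gives $H^{5/2}Q^{-1/2+\epsilon}$.
\end{enumerate}
The cap $q_+^{1/2}\ll H^{1/2}$ on the total loss from the exceptional $p^{-2}$ factors is exactly what your argument lacks.
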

\begin{proof}
Use the representation counts of $T_r$ and $S_{v,u}$ as nonnegative
local weights. Their zero Fourier coefficients are $O(p^{-2})$.
For $g=(g_1,g_2,g_3)\ne0$, the triple-root phase is a nonconstant
Laurent polynomial in $r$: its positive powers $r,r^2,r^3$ have
coefficients $-3g_1,3g_2,-g_3$. The one-variable Weil bound therefore
gives $O_c(p^{-5/2})$ after normalization by $p^3$.

For the square family the transform is
\begin{equation}\label{eq:square-quartic-fourier}
 p^{-3}e_p(2vg_2)\sum_{u\bmod p}
 e_p\bigl(g_2u^2+2(g_1+vg_3)u\bigr).
\end{equation}
It is $O(p^{-5/2})$ unless
\[
 g_2=0,\qquad g_1+vg_3=0;
\]
it is zero when the first equality holds and the second does not, and
is $p^{-2}$ when both hold.

Apply compactly supported Fourier smoothing as in
Lemma~\ref{lem:Poisson}. For modulus $q$, the integer dual vectors
satisfy $\|g\|_\infty\ll q/H=:R$. First remove the rational cone
\[
 g_2=0,\qquad g_1^2-cg_3^2=0.
\]
Outside it, the Fourier coefficient of the product weight is bounded
by
\[
 K^{\omega(q)}q^{-5/2}
 \gcd(q,g_2,g_1^2-cg_3^2)^{1/2}.
\]
The gcd is $O_c(R^2)$, and there are $O(R^3)$ dual vectors when
$R\ge1$. Their Poisson contribution is therefore
\[
 \ll_{c,\epsilon}H^3q^{-5/2+\epsilon}R^4
 \ll_{c,\epsilon}q^{3/2+\epsilon}/H.
\]
If $c$ is not a positive rational square, there are no nonzero
rational points on the removed cone.

Now suppose $c=t^2$ with $t\in\Z_{>0}$. Remove the two hyperplanes
$D=\pm tA$, which contain $O_c(H^2)$ quartics. Expand the product of
local weights by assigning at each prime one of the types
$T,S_+,S_-$. Let $q_+$ and $q_-$ be the products of primes assigned
$S_+$ and $S_-$ respectively. Off the removed hyperplanes,
\[
 q_+\mid D-tA,\qquad q_-\mid D+tA,
\]
so $q_+,q_-\ll_c H$. On the cone $g=(-th,0,h)$, the $S_-$ factor
vanishes unless its prime divides $h$; in all cases the product
transform is bounded by
\[
 K^{\omega(q)}q^{-5/2}q_+^{1/2}
             \gcd(h,q/q_+)^{1/2}.
\]
Since
\[
 \sum_{1\le|h|\le R}\gcd(h,q/q_+)^{1/2}
       \ll_\epsilon Rq^\epsilon,
\]
and $q_+\ll_cH$, its Poisson contribution is
$O_{c,\epsilon}(H^{5/2}q^{-3/2+\epsilon})$. The other cone is
identical. The number of assignments is $3^{\omega(q)}\ll_\epsilon
q^\epsilon$.

The zero-frequency contribution is
$O_\epsilon(H^3q^{-2+\epsilon})$. Summing these estimates over
$Q<q\le X$ and adding the removed hyperplanes gives
\eqref{eq:quartic-middle}.
\end{proof}

\begin{proposition}\label{prop:quartic-fixed}
For fixed $c\ne0$,
\[
 E_{4,c}(H)\ll_c H^2.
\]
\end{proposition}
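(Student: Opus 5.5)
We follow the outline of Proposition~\ref{prop:transfer}, with $s=3$ and target $H^2=H^{s-1}$, and replace Lemma~\ref{lem:Poisson} by Lemma~\ref{lem:quartic-middle} in the middle range.

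\emph{Non-primitive contributions.} Reducible polynomials and polynomials with a rational partial product number $O(H^2)$ by Lemma~\ref{lem:partial-products}; condition \eqref{eq:gcd} holds because $1\in I$. Discriminant-zero polynomials number $O(H^2)$ by Lemma~\ref{lem:discriminant}. The slice is index-regular by Theorem~\ref{thm:instances}(a). The imprimitive quartics, namely the $D_4$, $C_4$ and $V_4$ cases, contribute $O(H^{\beta(4)})=O(H^2)$ by Proposition~\ref{prop:imprimitive}, since $\beta(4)=2+2-2=2$ and there is no logarithmic loss. The only proper primitive subgroup of $S_4$ is $A_4$, so the remaining task is to count $A_4$ quartics.

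\emph{The $A_4$ count.} Fix a small $\delta>0$ and define $D_0,C_0$ as in \eqref{eq:D0C0}. In $A_4$ every nontrivial element has index $2$, since $3$-cycles and double transpositions both have index two. Hence every tame ramified prime $p\notin S$ has $k_p=2$, so $D_0=C_0^2$ and $f\bmod p\in\Sigma_2$ for each $p\mid C_0$. We split into three cases.

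\begin{enumerate}[(i)]
\item $C_0\le H^{1-\delta}$. Then $|\disc K_f|\ll H^{2-2\delta}$. For the field count, use a sharp bound for $A_4$ quartic fields, for example Bhargava's exponent $a(A_4)=(4+2)/4-1+1/2=1$ from \eqref{eq:Bhargava-fields}, or the known bound $F_4(A_4,X)\ll X^{1/2+\epsilon}$ via cubic resolvents. Combined with Lemma~\ref{lem:fixed-norm}, the crude exponent $1$ gives $H^{2-2\delta+\epsilon}$, which is already acceptable. So it suffices to take $a=1$ with the cutoff $D_0\le H^{2-2\delta}$.

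\item $H^{1-\delta}<C_0\le H^{1+\delta}$. Apply Lemma~\ref{lem:quartic-middle} with $Q=H^{1-\delta}$ and $X=H^{1+\delta}$. The four terms are $H^2$, $H^{2+\delta+\epsilon'}$, $H^{3/2+5\delta/2+\epsilon'}$ and $H^{2+\delta/2+\epsilon'}$. The second and fourth terms overshoot, so the middle band has to be placed above $H$ instead. We therefore take $Q=H^{1+\delta_1}$ and $X=H^{1+\delta_2}$ with $\delta_1<\delta_2<1/5$. Then the terms are $H^{2-\delta_1+\epsilon}$, $H^{3/2+5\delta_2/2}<H^2$, and $H^{2-\delta_1/2}$, all acceptable.

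\item Adjusted small range and large range. The small range is now $C_0\le H^{1+\delta_1}$, that is $D_0\le H^{2+2\delta_1}$. Bhargava's $a=1$ gives $H^{2+2\delta_1}$, which just fails. So we need an $A_4$ field exponent strictly below $1$. Since $A_4$ quartic fields are determined by their cyclic cubic resolvent together with a quadratic extension of it, a bound $F_4(A_4,X)\ll X^{a+\epsilon}$ with $a<1$ is available in the literature (Baily, and later improvements). Using it, choose $\delta_1$ small relative to $1-a$. The large range $C_0>H^{1+\delta_2}$ is handled as in Proposition~\ref{prop:transfer}: either $A_0>H$ and Lemma~\ref{lem:large-primes} applies, or one builds a divisor $B_0$ in the band $(H^{1+\delta_1},H^{1+\delta_2}]$ from primes of size at most $H^{\eta}$ and applies Lemma~\ref{lem:quartic-middle} to $B_0$. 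This works because that lemma counts the condition at every prime dividing $q$.
\end{enumerate}

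\emph{Main obstacle.} The delicate step is balancing the exponents. The square-family term $H^{5/2}Q^{-1/2}$ forces $Q>H$, which in turn pushes the small-discriminant range beyond what the crude field bound $a=1$ allows. The plan therefore depends on citing an $A_4$ field-count exponent below $1$, and on checking that the middle-band construction of $B_0$ still matches the hypotheses of Lemma~\ref{lem:quartic-middle}, namely that the primes are squarefree and supported on good primes.
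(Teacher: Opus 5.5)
Your final argument, once the $Q=H^{1-\delta}$ attempt is discarded, is essentially the paper's. For $C_0\le H^{1+\delta_1}$ you use an $A_4$ field-count exponent below $1$ together with Lemma~\ref{lem:fixed-norm}; above that you use Lemma~\ref{lem:large-primes} or a band divisor $B_0$ fed into Lemma~\ref{lem:quartic-middle} with $Q=H^{1+\delta_1}$, $X=H^{1+\delta_2}$, $\delta_2<1/5$ (take $\eta\le\delta_2-\delta_1$ so the band can actually be hit). The exponent below $1$ should be cited from \cite[Theorem~1.4]{BSTTTZ}, which gives $F_4(A_4,Z)\ll Z^{4/5}$, rather than from Baily. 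Also, $F_4(A_4,X)\ll X^{1/2+\epsilon}$ is Malle's prediction, not a known bound, though your final argument does not use it.
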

\begin{proof}
The fixed-constant quartic slice is index-regular by
Theorem~\ref{thm:instances}(a). Reducible and discriminant-zero
quartics contribute $O_c(H^2)$ by Lemmas~\ref{lem:partial-products}
and \ref{lem:discriminant}.
Proposition~\ref{prop:imprimitive} gives $O_c(H^2)$ for the
irreducible imprimitive quartics, including the rational
partial-product cases already removed by Lemma \ref{lem:partial-products}.
Thus only the primitive group $A_4$ remains.

For an $A_4$-field, every nontrivial tame inertia permutation has
index two. Hence, in the notation \eqref{eq:D0C0},
$D_0=C_0^2$. The field-counting bound
\cite[Theorem~1.4]{BSTTTZ} implies, for example,
\[
 F_4(A_4,Z)\ll Z^{4/5}.
\]
Fix $\delta=1/100$. If $C_0\le H^{1+\delta}$, then
$D_0\le H^{2+2\delta}$ and Lemma~\ref{lem:fixed-norm} gives
\[
 O_c\bigl(H^{(8/5)(1+\delta)}(\log H)^3\bigr)=O_c(H^2).
\]

Suppose now that $C_0>H^{1+\delta}$, and put $\eta=\delta/4$.
If the product $A_0$ of prime divisors of $C_0$ exceeding $H^\eta$
is greater than $H$, Lemma~\ref{lem:large-primes} applies. Otherwise,
multiply $A_0$ successively by prime factors of $C_0/A_0$ until a
divisor $B_0\mid C_0$ satisfies
\[
 H^{1+3\delta/4}<B_0\le H^{1+\delta}.
\]
Apply Lemma~\ref{lem:quartic-middle} with
\[
 Q=H^{1+3\delta/4},\qquad X=H^{1+\delta}.
\]
Apart from the term $H^2$, the three exponents in
\eqref{eq:quartic-middle}, before arbitrarily small $\epsilon$-losses,
are
\[
 2-3\delta/4,\qquad 3/2+5\delta/2,
 \qquad 2-3\delta/8,
\]
all strictly below two. This proves the proposition.
\end{proof}

\begin{proof}[Proof of Theorem~\ref{thm:fixed-constant}]
The lower bound is Lemma~\ref{lem:lower} with $s=n-1$.
For $n\ge6$, take $I=\{1,\ldots,n-1\}$ and $J=\varnothing$ in
Theorem~\ref{thm:main}; Theorem~\ref{thm:instances}(a) supplies
index-regularity and the inequality $n-1>n/2+1$ holds. The cases
$n=5$ and $n=4$ are Propositions~\ref{prop:quintic-fixed} and
\ref{prop:quartic-fixed}, respectively.
\end{proof}

Degree six with one additional interior coefficient fixed, so
$s=4$, is a particularly concrete next test. The slice is
index-regular for every position and value, and
Proposition~\ref{prop:imprimitive} already bounds all its imprimitive
polynomials by $O(H^3)$. Two independent endpoint issues remain.
First, Proposition~\ref{prop:Fourier} does not cover $s=4$, so one
needs a four-variable analysis of the index-two locus. Second, for
$A_6$ the general field bound has $a(A_6)=3/2$, giving
$2a(A_6)=3=s-1$ rather than the strict inequality required in
Proposition~\ref{prop:transfer}. Thus even after the local Fourier
problem is solved, the alternating group needs a sharper field count
(or a separate argument).

\section{What Malle's conjecture would improve}\label{sec:Malle}

Only the upper-bound form of Malle's conjecture is relevant here:
for each transitive permutation group $G\le S_n$,
\begin{equation}\label{eq:Malle}
 F_n(G,X)\ll_{n,G,\epsilon}X^{1/\ind(G)+\epsilon}.
\end{equation}
We do not use a conjectural leading constant or a prescribed power
of $\log X$; see \cite{Malle,ChowDietmann} for the field-counting
conjecture and its formulation. For proper primitive groups,
\eqref{eq:Malle} replaces $a(G)=n/4$ by $a(G)\le1/2$.
The field-counting inequality in Proposition~\ref{prop:transfer}
then becomes simply
\begin{equation}\label{eq:Malle-threshold}
 s \geq 3.
\end{equation}
This removes the half-dimensional restriction from the
\emph{primitive field-counting step}. It does not by itself prove
the required Fourier estimates or the high-index density bounds
on every three-dimensional slice.

There is nevertheless a concrete conditional theorem using only
the local results proved in this paper.

\begin{corollary}\label{cor:Malle}
Assume \eqref{eq:Malle} for the proper primitive groups of degree
$n$. Let $5\le s\le n-1$, let the free coefficients be an initial
block, and choose the remaining fixed coefficients in the nonempty
open set $U_{I,c}$ of Theorem~\ref{thm:instances}(b). Then the
proper primitive exceptional count is $O(H^{s-1})$.
Moreover,
\[
 E_{\cF}(H)\asymp H^{s-1}
\]
if $n$ is prime, or if $n$ is composite and
\begin{equation}\label{eq:Malle-imprimitive}
 s\ge\beta(n)+1=\ell+n/\ell-1,
\end{equation}
where $\ell$ is the smallest prime divisor of $n$.
The analogous assertion holds for a final free block with generic
prescribed coefficients.
\end{corollary}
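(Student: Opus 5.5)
The plan is to feed the conditional field exponent into Proposition~\ref{prop:transfer}, using only local inputs already established for initial blocks. First I verify the hypotheses of Proposition~\ref{prop:transfer} on the slice with $I=\{1,\ldots,s\}$ and $\mathbf b\in U_{I,c}$:
\begin{enumerate}[(i)]
\item The slice is strongly index-regular by Theorem~\ref{thm:instances}(b). It is also index-regular by part (a), so $\dim(L\cap\Sigma_2)\le s-2$.
\item Proposition~\ref{prop:block-Fourier} gives bounded nonnegative majorants with $\theta=1$ whenever $s\ge5$. No half-dimensional threshold is needed.
\item By the last clause of Proposition~\ref{prop:Fourier}, carried over to the block setting through Lemma~\ref{lem:local-densities}, strong regularity lets me take $\rho_k=k$, with $W_{k,p}=0$ for $k>s$. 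Then $\rho_k-k/2=k/2\ge1$, so \eqref{eq:rho-condition} holds in every dimension.
\end{enumerate}
The Malle bound \eqref{eq:Malle} gives $a(G)=1/\ind(G)\le1/2$ for proper primitive $G$, since such $G$ contains no transposition. Hence $2a(G)\le1<s-1$ for $s\ge5$, which is \eqref{eq:Malle-threshold} with room to spare. Proposition~\ref{prop:transfer} then bounds each of the finitely many proper primitive groups by $O(H^{s-1})$, proving the first assertion.

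For the full asymptotic, the lower bound is Lemma~\ref{lem:lower}. For the upper bound I collect the remaining exceptional sources.
\begin{enumerate}[(i)]
\item Condition \eqref{eq:gcd} holds since $1\in I$ (Remark~\ref{rem:gcd-dense}). So Lemma~\ref{lem:partial-products} bounds reducible polynomials and rational partial-product cases by $O(H^{s-1})$.
\item Lemma~\ref{lem:discriminant} shows the discriminant is a nonzero polynomial on the slice, since index-regularity gives the needed bound on $L\cap\Sigma_2$. Discriminant-zero polynomials are therefore $O(H^{s-1})$.
\item For prime $n$ there are no transitive imprimitive groups.
\item For composite $n$, Proposition~\ref{prop:imprimitive} gives $O(H^{s-1}+H^{\beta(n)})$, which is $O(H^{s-1})$ exactly when $s-1\ge\beta(n)=\ell+n/\ell-2$, i.e.\ \eqref{eq:Malle-imprimitive}. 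The absence of a logarithm in Proposition~\ref{prop:imprimitive} is what allows the endpoint equality.
\end{enumerate}
The final-block statement follows by the weighted reversal \eqref{eq:reversal}. It maps the final-block family bijectively onto an initial-block family with constant $c^{n-1}$ and rescaled, boxed coefficients; rectangular boxes are permitted, as noted after Corollary~\ref{cor:one-extra}. It also carries a generic open set of prescribed values to a generic open set, so I can simply invoke Proposition~\ref{prop:block-Fourier} in its final-block form together with Theorem~\ref{thm:instances}(b).

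The main point needing care is step (iii) of the first paragraph. Proposition~\ref{prop:block-Fourier} states only the capped $\rho_k=\min(k,s)$ from index-regularity. When $s<(n+1)/2$, the capped values can violate \eqref{eq:rho-condition}; for instance $k=n-1>s$ with $\rho_k=s$ fails if $s<(n+1)/2$. So I must check that strong index-regularity upgrades the high-index densities to $p^{-k}$ and empties the strata with $k>s$ at good primes. This follows from Lemma~\ref{lem:local-densities}'s strong clause, which also yields $\widehat W_{k,p}(0)\ll p^{-k}$ and trivial nonzero-frequency bounds $\ll p^{-k}\le p^{-3}$ for $3\le k\le s$.

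The index-two part is unaffected: it has $\rho_2=2$ and nonzero-frequency decay $p^{-3}$ from the explicit triple-root and square-quadratic analysis of Proposition~\ref{prop:block-Fourier}. That analysis uses only $s\ge5$, so no further input is needed there.
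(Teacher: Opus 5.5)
Your proposal is correct and follows the paper's proof step for step. The chain is the same: strong index-regularity from Theorem~\ref{thm:instances}(b) gives $\rho_k=k$ with zero weights for $k>s$; Proposition~\ref{prop:block-Fourier} gives $\theta=1$ for $s\ge5$; the Malle exponent $a(G)\le1/2$ feeds into Proposition~\ref{prop:transfer}; and Lemma~\ref{lem:partial-products}, Lemma~\ref{lem:discriminant}, Proposition~\ref{prop:imprimitive}, Lemma~\ref{lem:lower}, and weighted reversal finish the argument. Your explicit check that the capped $\rho_k=\min(k,s)$ would violate \eqref{eq:rho-condition} when $s<(n+1)/2$ is exactly why the paper insists on strong rather than ordinary index-regularity here.
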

\begin{proof}
The genericity assumption is used here for strong, rather than merely
ordinary, index-regularity: Theorem~\ref{thm:instances}(b) gives
$\rho_k=k$, including zero weights for $k>s$.
Proposition~\ref{prop:block-Fourier} supplies
$\theta=1$ for every $s\ge5$, independently of $n$.
Proposition~\ref{prop:transfer} now applies by
\eqref{eq:Malle-threshold}. An initial block contains exponent one,
so Lemma~\ref{lem:partial-products} counts its rational
partial-product cases. Proposition~\ref{prop:imprimitive} supplies
the remaining count under \eqref{eq:Malle-imprimitive}; there is
no imprimitive contribution in prime degree. The discriminant-zero
count and the lower bound are unchanged. Weighted reversal proves
the final-block version.
\end{proof}

Thus, in prime degrees $n\ge7$, the conditional theorem allows
only five varying coefficients, regardless of $n$. For odd
composite degrees with smallest prime divisor three it allows
$s\ge n/3+2$, and for $n=\ell^2$ with $\ell$ an odd prime it
allows $s\ge2\ell-1$. For even degrees the imprimitive estimate
used here still asks for $s\ge n/2+1$. These are sufficient ranges,
not proposed optimal thresholds.

Finally, the formal inequality \eqref{eq:Malle-threshold} suggests
that three varying parameters could suffice for the primitive
part when suitable local cancellation and density statements are
available. Our degree-independent block Fourier proof requires
five parameters, and we do not assert an arbitrary three-parameter
theorem. At two parameters, the absence of a positive Fourier gain
and the equality in the field-counting comparison remain even
under \eqref{eq:Malle}. This separates the limitations of present
field counts, the local geometry of coefficient slices, and the
special cubic endpoint.


\begin{thebibliography}{9}

\bibitem{Bhargava}
M. Bhargava,
\emph{Galois groups of random integer polynomials and van der Waerden's
conjecture},
Ann. of Math. (2) \textbf{201} (2025), no. 2, 339--377.
\href{https://doi.org/10.4007/annals.2025.201.2.1}{doi:10.4007/annals.2025.201.2.1}.

\bibitem{BSTTTZ}
M. Bhargava, A. Shankar, T. Taniguchi, F. Thorne, J. Tsimerman,
and Y. Zhao,
\emph{Bounds on 2-torsion in class groups of number fields and integral
points on elliptic curves},
J. Amer. Math. Soc. \textbf{33} (2020), no. 4, 1087--1099.
\href{https://doi.org/10.1090/jams/945}{doi:10.1090/jams/945}.

\bibitem{ChowDietmann}
S. Chow and R. Dietmann,
\emph{Enumerative Galois theory for number fields},
preprint, \href{https://arxiv.org/abs/2304.11991v2}{arXiv:2304.11991v2},
April 3, 2026.

\bibitem{Cohen}
S. D. Cohen,
\emph{The distribution of the Galois groups of integral polynomials},
Illinois J. Math. \textbf{23} (1979), no. 1, 135--152.
\href{https://doi.org/10.1215/ijm/1256048323}{doi:10.1215/ijm/1256048323}.

\bibitem{Malle}
G. Malle,
\emph{On the distribution of Galois groups. II},
Experiment. Math. \textbf{13} (2004), no. 2, 129--135.
\href{https://doi.org/10.1080/10586458.2004.10504527}{doi:10.1080/10586458.2004.10504527}.

\end{thebibliography}
\end{document}